\definecolor{myblue}{rgb}{0.122, 0.435, 0.698}
\newmdenv[innerlinewidth=0.5pt, roundcorner=4pt,linecolor=myblue,innerleftmargin=6pt,
innerrightmargin=6pt,innertopmargin=6pt,innerbottommargin=6pt]{bluebox}
\definecolor{myred}{rgb}{0.8, 0.1, 0.1}
\newmdenv[innerlinewidth=0.5pt, roundcorner=4pt,linecolor=myred,innerleftmargin=6pt,
innerrightmargin=6pt,innertopmargin=6pt,innerbottommargin=6pt]{redbox}
\newcommand{\mc}{\mathcal}
\newcommand{\eqDef}{:=}
\newcommand{\norm}[1]{\| #1\|}
\newcommand{\hh}{\hspace{-2pt}}
\newcommand{\Ker}{\operatorname{Ker}}
\newcommand{\eqd}{:=}
\renewcommand{\citep}[1]{\citeauthor{#1} (\citeyear{#1})}
\newcommand{\N}{\mathcal{N}} 
\newcommand{\T}{\mathcal{T}} 
\newcommand{\X}{\mathcal{X}} 
\newcommand{\lnu}{\underline{\nu}} 
\newcommand{\rr}{\mathbb{R}} 
\newcommand{\nit}{\mathbb{N}}
\newcommand{\Y}{\mathcal{Y}}
\newcommand{\yy}{\bm{y}}
\newcommand{\x}{x}
\newcommand{\xx}{\bm{\x}}
\newcommand{\lbx}{\underline{x}}
\newcommand{\ubx}{\overline{x}}
\newcommand{\ee}{\bm{e}}
\newcommand{\pp}{\bm{p}}
\renewcommand{\P}{\mathcal{P}}
\newcommand{\mr}{\mathrm}
\newcommand{\ol}{\overline}
\newcommand{\xE}{ {\xx} }
\newcommand{\xP}{ {\yy} }
\newcommand{\xEinf}{ {\xE^\infty} }
\newcommand{\xPinf}{ {\xP^\infty} }
\newcommand{\Tin}{\T^{\circ}}
\newcommand{\omu}{\overline{\mu}}
\newcommand{\umu}{\underline{\mu}}
\newcommand{\nt}{_{n,t}}
\newcommand{\Popt}{\P_{\mr{D}}} 
\newcommand{\Q}{\mathcal{Q}}
\newcommand{\tr}{\top} 
\newcommand{\U}{\mathcal{U}}
\newcommand{\dsoneN}{\mathds{1}_{\hspace{-1pt}N} }
\newcommand{\dsoneT}{\mathds{1}_{\hspace{-1pt}T} }
\newcommand{\EE}{\bm{E}}
\newcommand{\dsone}{\mathds{1}}
\newcommand{\oneT}{
\bm{1}}
\newcommand{ \llbx}{\bm{\lbx}}
\newcommand{ \uubx}{\bm{\ubx}}
\newcommand{\kexp}{^{(k)}}
\newcommand{\epscvg}{\varepsilon_{\rm{cvg}}}
\newcommand{\epsdis}{\varepsilon_{\rm{dis}}}
\newcommand{\llam}{\bm{\lambda}}
\newcommand{\mmu}{\bm{\mu}}
\newcommand{\nnu}{\bm{\nu}}
\renewcommand{\L}{\mathcal{L}}
\newcommand{\uH}{\underline{\T}}
\newcommand{\oH}{\overline{\T}}
\newcommand{\hatt}{\hat{t}}
\newcommand{\nth}{_{n ,\hatt}}
\newcommand{\txt}{\textstyle}
\newcommand{\y}{y}
\newcommand{\xxinf}{\xx^{\infty}}
\newcommand{\yyinf}{\yy^{\infty}}
\newcommand{\ssig}{\bm{\sigma}}
\newcommand{\Ypp}{\Y_{\pp}}
\renewcommand{\SS}{\bm{S}}
\newcommand{\Arhs}{A } 
\newcommand{\Bseuil}{B} 
\newcommand{\dsg }{\textsc{Disag}}
\newcommand{\ids}{^{(s)}}
\newcommand{\bb}{\bm{b}}
\newcommand{\n}{n}
\newcommand{\CutSet}{\Lambda}
\newcommand{\cc}{\bm{c}}
\newcommand{\aff}{ \operatorname{aff} }
\newcommand{\Bp}{B}  
\newcommand{\diag}{\operatorname{diag}}
\newcommand{\oneH}{\oneT_{\bar{\T}}}
\newcommand{\tm}{\text{-}}
\newcommand{\V}{\mathcal{V}}
\newcommand{\E}{\mathcal{E}}
\newcommand{\G}{\mathcal{G}}
\newcommand{\Kexp}{^{(K)}}
\newcommand{\bxx}{ \bar{\xx} }
\newcommand{\ppv}{p^{\textsc{pv}}}
\newcommand{\pppv}{\bm{p}^{\textsc{pv}}}
\newcommand{\pg}{p^g}
\newcommand{\ppg}{\bm{p}^g}
\newcommand{\pgub}{\overline{p}^g}
\newcommand{\pglb}{\underline{p}^g}
\newcommand{\I}{\mathcal{I}}
\newcommand{\xst}{\bo^{\textsc{st}}}
\newcommand{\Cst}{C^{\textsc{st}}}
\newcommand{\xon}{\bo^{\textsc{on}}}
\newcommand{\normop}[1]{\left\Vert| #1|\right\Vert}\newcommand{\bo}{b} %
\newcommand{\scal}{\kappa}
\newcommand{\Kexpp}{^{(K-1)}}
\newcommand{\Tcut}{\T}
\newcommand{\Ncut}{\N}
\newcommand{\Nall}{[N]}
\newcommand{\Tall}{[T]}
\newcommand{\Tun}{\underline{\T}_n}
\newcommand{\Ton}{\ol{\T}_n}
\newcommand{\barT}{\bar{\T}}
\newcommand{\scvg}{s^{\text{cv}}}
\renewcommand{\geq}{\geqslant}
\renewcommand{\leq}{\leqslant}
\Crefname{algocf}{Algorithm}{Algorithms}
\Crefname{claim}{Claim}{Claims}
\newtheorem{assumption}{Assumption}
\theoremstyle{definition}
\author{Olivier Beaude \thanks{EDF R\vspace{-0.5pt}\&\vspace{-1pt}D, OSIRIS, Palaiseau, France
  (\email{[olivier.beaude,nadia.oudjane]@edf.fr})}
\and  Pascal Benchimol\thanks{CMAP, \'Ecole polytechnique, CNRS and  Triscale Innov, Palaiseau, France (\email{pascal.benchimol@polytechnique.edu}).}
\and St\'ephane Gaubert 
\thanks{Inria and CMAP, \'Ecole polytechnique, CNRS, Palaiseau, France
 (\email{stephane.gaubert@inria.fr}) } 
\and  Paulin Jacquot\thanks{EDF R\vspace{-0.5pt}\&\vspace{-1pt}D, OSIRIS, Inria and CMAP, \'Ecole polytechnique, CNRS, Palaiseau, France, \email{paulin.jacquot@polytechnique.edu}}
\and Nadia Oudjane\footnotemark[2]}
\title{A privacy-preserving method to optimize distributed resource allocation \thanks{Initial version submitted on August~ 2, 2019. Revision of \today. Some of the present results have been announced in the conference proceedings~\cite{paulin2019disaggregationenergy}.}}
\begin{document}

\maketitle


\begin{abstract}
  We consider a resource allocation problem involving a large number of agents with individual constraints subject to privacy, 
and a central operator whose objective is to  
 optimize a global, possibly nonconvex,  cost 
 while satisfying the  agents' constraints,  for instance an energy operator in charge of the  management of energy consumption flexibilities of many individual consumers. 
We provide a privacy-preserving algorithm that does compute the optimal allocation of resources, avoiding each agent to reveal her private information
(constraints and individual solution profile) neither to the central operator nor to a third party. 
Our method
relies on an aggregation procedure: we 
compute iteratively a global allocation
of resources, and gradually ensure existence of  a disaggregation, that is individual profiles satisfying agents' private constraints,
by a protocol involving the generation of polyhedral cuts and secure multiparty computations (SMC). To obtain these cuts, we use an alternate projection
method,
which is implemented locally by each agent,
preserving her privacy needs.
We address especially the case in which
the local and global constraints define
a transportation polytope.
Then, we provide theoretical convergence estimates
together with numerical results, showing
that the algorithm
can be effectively
used to solve  the allocation problem in high dimension, while addressing privacy issues. 
\end{abstract}

\section{Introduction}

\subsection{Motivation}
Consider an operator of an electricity microgrid optimizing the joint production schedules of renewable and thermal %
power plants in order to satisfy, at each time period, the consumption constraints of its %
consumers. %
To optimize power generation or market costs and   the integration of renewable energies, this operator relies on demand response techniques,
taking advantage of the flexibilities of some of the consumers electric appliances---those which can be controlled without impacting the consumer's comfort, as electric vehicles or water heaters~\cite{PaulinTSG17}. %
 However, for confidentiality reasons, consumers may not be willing to provide their consumption constraints or their consumption profiles to a central operator or to any third party, as this information could be used to infer private information such as their presence at home.

 The \emph{global problem} of the operator is to find an allocation  of power (aggregate flexible consumption) $\pp=(p_t)_{t\in\Tall}$, where $\Tall:=\{1,\dots,T\}$
 is the set of time periods,
 such that $\pp \in \P$ (feasibility constraints of power allocation, induced by the power plants constraints), while minimizing the cost  $f(\pp)$ (representing the production and operations costs of the centrally controlled power plants).
 Besides, this aggregate allocation has to match an individual consumption profile $\xx_n= (x\nt)_{t\in\Tall} $ for each of the consumer (agent) $n\in\Nall$ considered.
 The problem can be written as follows:
\begin{subequations}
\label{pb:global}
\begin{align} 
& \min_{\xx\in \rr^{N \times T}\hh,\  \pp\in \P} f(\pp)  \\
 & \xx_n \in \X_n, \   \forall n \in \Nall  \label{cons:indfeas}\\
 &  \sum_{n\in\Nall} x\nt = p_t, \ \forall t \in \Tall \label{cons:disagfeas} \  ,
\end{align}
\end{subequations}
The (aggregate) allocation $\pp$ can be made \emph{public}, that is, revealed to all agents. However, the individual constraint set $\X_n$ and individual profiles $\xx_n$ constitute \emph{private} information of agent $n$, and should not be revealed to the operator or any third party. %

It will be helpful to think of Problem \eqref{pb:global} as the combination
of two interdependent subproblems:
\begin{enumerate}[wide,label=\roman*)]
\item given
an aggregate allocation $\pp$, the \emph{disaggregation problem}
consists in finding, for each agent $n$,
an
individual profile $\xx_n$ satisfying her individual constraint \eqref{cons:indfeas}, so that constraint~\eqref{cons:disagfeas} is satisfied. This is equivalent
to:
\begin{subequations}\label{pb:disag}
 \begin{align}  \label{pb:disag-find}
& \textsc{Find } \xx \in \Y_{\pp} \cap \X  \\
 \text{ where } & \Y_{\pp} \eqd \{\yy\in\rr^{NT} | \yy^\tr \dsoneN  = \pp \} \label{eq:defY}  \hfill \text{ and } \hfill  \X \eqd \prod_{n\in\Nall} \X_n \ . 
\end{align}
\end{subequations}
When  \eqref{pb:disag} has a solution, we say that a \emph{disaggregation} exists for $\pp$;  %
\item For a given subset $\Q\subset \P$, we define the \emph{master problem}, 
\begin{equation} \label{pb:master-Q}
\min_{\pp\in \Q} f(\pp) \ .
\end{equation}
\end{enumerate}
When $\Q$ is precisely the set of aggregate allocations for which a disaggregation exists, the optimal
solutions of the master problem correspond to the optimal solutions of \eqref{pb:global}.

Aside from the example above, \emph{resource allocation problems} (optimizing common resources shared by multiple agents) with the same structure as \eqref{pb:global}, find many applications in energy \cite{muller2017aggregation,PaulinTSG17}, %
 logistics \cite{laiLam95shipping}, distributed computing \cite{ma1982task}, health care \cite{rais2011operations} and telecommunications  \cite{zulhasnine2010efficient}. 
 In these applications, several entities or agents (e.g.\ consumers, stores, tasks) share a common resource (energy, products, CPU time, broadband) which has a global cost for the system. %
 For large systems composed of multiple agents, the dimension of the overall problem can be prohibitive. Hence, a solution is to rely on decomposition and distributed approaches~\cite{ bertsekas1989parallel,palomar2006tutorial,xiao2006optimal}.
Besides, %
agents' individual constraints are often subject to privacy issues  \cite{huberman2005valuating}. %
These considerations have paved the way to the development of privacy-preserving, or non-intrusive methods and algorithms, e.g.~\cite{zoha2012non,jagannathan2006new}. 

In this work, except in \Cref{sec:generArbitraryIndividualSet}, we consider that each agent $n\in\Nall$ has a global demand constraint (e.g.\ energy demand or product quantity), which confers to the disaggregation problem the particular structure of a transportation polytope~\cite{bolker1972transportation}: the sum over the agents is fixed by the aggregate solution $\pp$, while the sum over the $T$ resources is fixed by the agent global demand constraint. Besides, individual constraints can also include minimal and maximal levels on each resource. For instance, electricity consumers require, through their appliances, a minimal and maximal power at each time period.

\subsection{Main Results} The main contribution of the paper is to provide a non-intrusive and distributed algorithm (\Cref{algo:confidOptimDisag}) that computes an aggregate resource allocation $\pp$,  optimal solution of the---possibly nonconvex---optimization problem \eqref{pb:global}, along with feasible individual profiles $\xx$ for agents, without revealing the individual constraints of each agent to a third party, either another agent or a central operator. 
The algorithm solves iteratively instances of \emph{master  problems} $\min_{\pp \in\P^{(s)}} f(\pp) $, %
obtained by constructing a decreasing sequence of successive approximations $\P^{(s)}$
of the set of aggregate consumptions $\pp$
for which a disaggregation exists (see~\eqref{pb:global}),
starting from $\P^{(0)}=\P$.
At each step, we reduce the set  $\P^{(s)}$ by incorporating a new constraint on $\pp$ (a {\em cutting plane}), before solving the next master problem. 
We shall see that this cutting plane can be computed and added to the master problem without revealing any individual information on the agents.

More precisely, to identify whether or not disaggregation \eqref{pb:disag} is feasible and to add a new constraint in the latter case, our algorithm relies on the alternate projections method (APM) \cite{von1950functional,GUBIN19671} for finding a point in the intersection of convex sets.
 Here, we consider the two following sets: on the one hand, the affine space of profiles $\xx \in \rr^{NT}$ aggregating to a given resource allocation $\pp$,
 and on the other hand, the set defined by all agents individual constraints (demands and bounds).
 As the latter is defined as a Cartesian product of each agent's feasibility set, APM can operate in a distributed fashion.
  The sequence constructed by APM converges to a single point if the intersection of the convex sets is nonempty, and it converges to a periodic orbit of length $2$ otherwise.
If APM converges to a periodic orbit, meaning that the disaggregation is not feasible,  we construct from this orbit a
polyhedral {\em cut},
i.e.\ a linear inequality satisfied by all feasible solutions $\pp$  of the global 
problem \eqref{pb:global}, but violated from the current resource allocation (\Cref{th:cutN0T0violated}).
 Adding this cut to the \emph{master problem} \eqref{pb:master-Q} by updating $\Q$ to a specific subset, we can recompute a new resource allocation and repeat this procedure until disaggregation is possible.
  At this stage, the use of a cryptographic protocol, {\em secure multiparty computation}, allows us to preserve the privacy of agents.
  Another main result stated in this paper is the explicit  upper bound on the convergence speed of APM in our framework (\Cref{thm:cvgAP}), which is
  obtained by spectral graph theory methods, exploiting also geometric properties of transportation polytopes.  %
  This explicit speed shows a linear impact of the number of agents, which is a strong argument for the applicability of the method in large distributed systems.

\subsection{Related Work} A standard approach (e.g.~\cite{palomar2006tutorial, xiao2004simultaneous,seong2006optimal}) to solve resource allocation problems in a distributed way is to rely on a Lagrangian based decomposition technique: for instance dual subgradient methods \cite[Ch.~6]{bertsekas1997nonlinear} or ADMM \cite{glowinski1975approximation}.
  Such techniques are generally used to decompose a large problems into several subproblems of small dimension. 
However, those methods often %
 require global convexity hypothesis, which are not satisfied in many practical problems (e.g.\ MILP). We refer the reader to \cite[Chapter 6]{bertsekas1997nonlinear}  for more background. 
On the contrary, our method can be used when the  allocation problem \eqref{pb:global} is not convex.

As  explained in \Cref{sec:generArbitraryIndividualSet}, the method proposed here can be related to Bender's decomposition \cite{benders1962partitioning}. 
It differs from Bender's approach in the way cuts are generated: 
instead of solving linear programs, we use APM and our theoretical results, which provides a decentralized, privacy-preserving and scalable procedure.
In contrast, %
 at each stage, Benders' algorithm requires to solve  a  linear program  requiring the knowledge of the private constraints of each individual agent (see \Cref{subsec:benders} for more details).

The problem of the aggregation of constraints has been studied in the field of energy, in the framework of smart grids  \cite{muller2017aggregation,anjos2018decentralized}.  In \cite{muller2017aggregation}, the authors study the management of energy flexibilities and propose to approximate individual constraints by zonotopic sets to obtain an aggregate feasible set. A centralized aggregated problem is solved via a subgradient method, and a disaggregation procedure of a solution computes individual profiles. 
In \cite{anjos2018decentralized}, the authors     propose to solve the economic power dispatch of a microgrid, subject to several agents private constraints, by using a Dantzig-Wolfe decomposition method. %

APM has been the subject of several works in itself \cite{GUBIN19671,bauschke1993convergence,bauschke2015bregman}. The authors of \cite{borwein2014analysis} provide general 
results on the convergence rate of APM for semi-algebraic sets. They show that the convergence is geometric for polyhedra. However, it is generally hard to compute explicitly the geometric convergence rate of APM, as this requires to bound the singular values of certain matrices arising from the polyhedral constraints.
A remarkable example where  an explicit convergence rate for APM  has been
obtained is  \cite{nishihara2014convergence}. The authors consider there a different class of polyhedra arising in submodular optimization. A common point with our results is the use of spectral graph theory arguments to estimate singular values. %

\subsection{Structure}
In~\Cref{sec:resourceallocation-disag}, we describe the class of resource allocation problems that we  address. We formulate the idea of  decomposition via \emph{disaggregation} subproblems. 
In~\Cref{sec:APM}, we focus on APM, the subroutine used to solve the disaggregation subproblems.
 In~\Cref{subsec:generateHoffmanAPM},
 after recalling basic properties of APM,  
 we establish the key result on which relies the proposed decomposition: how to generate a new cut to add in the master problem, from the output of APM. 
 In~\Cref{subsec:APMwithSMC}, we show how to guarantee
 the privacy of the proposed procedure by using secure multiparty computation techniques. 
  In~\Cref{subsec:complexityAnalysis} , we establish an explicit upper bound on the rate of convergence of APM in our case. 
  In \Cref{sec:generArbitraryIndividualSet}, we  generalize part of our results and propose a modified algorithm which applies to polyhedral agents constraints.
 Finally, in~\Cref{sec:examples}, we present numerical examples:  \Cref{subsec:illusToyExample} gives an illustrative toy example in dimension $T=4$, while in  \Cref{subsec:appli-microgrid}, we consider a larger scale, nonconvex example, coming from the microgrid application presented at the beginning of the introduction.

 \textbf{Notation.} Bold fonts, like ``$\xx$'',
 are used to denote vectors, while normal fonts, like ``$x$'', refer to a scalar quantities.
 We denote by $\bm{v}^\tr$ the transpose of a vector $\bm{v}$.
 Recall that we denote by $[N]$ the set $\{1,\dots,N\}$.
Calligraphic letters such as $\Tcut,\Ncut,\X$ are used to denote sets,
and if $\Tcut \subset \Tall$, $\Tcut^c:=\{t\in \Tall \setminus \Tcut\}$ denotes the complementary set of $\Tcut$. 
The notation
$\U([a,b])$ stands for the uniform distribution on $[a,b]$.
The notation $P_\mc{C}(.)$ refers to the Euclidean projection onto a convex set $\mc{C}$.  For $d \in \nit, \mathds{1}_d$ denotes the vector of ones $(1 \dots 1)^\tr \in \rr^d$.

\section{Resource Allocation and Transportation Structure} 
\label{sec:resourceallocation-disag}

\subsection{A Decomposition Method Based on Disaggregation}
As stated in the introduction, we consider a centralized entity (e.g.~an energy operator)  interested in minimizing a possibly nonconvex cost function $\pp \mapsto f(\pp)$, where $\pp \in \rr^T$ is the {\em aggregate allocation} of $T$ dimensional resources (for example power production over $T$ time periods).
This resource allocation $\pp$ is to be shared between
$N$ individual agents, each agent obtaining a part $\xx_n \in \X_n$, where $\X_n$ denotes the individual feasibility set of agent $n$.

The global problem the operator wants to solve is described in  \eqref{pb:global}.
We assume that
in Problem \eqref{pb:global}, the constraints set $\X_n$ and individual profile $\xx_n$ are {\em confidential} to agent $n$ and should not be disclosed to the central operator or to another agent.

Let us define the set $\Popt $ of feasible aggregate allocations that are disaggregable as:
\begin{equation} \label{eq:def-Popt}
\Popt \eqd \big\{ \pp \in \P \ | \ \exists \xx \in \X  \ ; \  \pp= \txt\sum_n \xx_n \big\} \ .
\end{equation} 
Feasibility of problem \eqref{pb:global} is equivalent to having $\Popt$ not empty.

Constraints for each agent are composed of a global demand over the resources and lower and upper bounds over each resource, as given below:

\begin{assumption} \label{assp:Xn}
For each $n\in\Nall$,  there exists fixed parameters $E_n >0$, $\uubx_n\in\rr^T$, $\llbx_n\in\rr^T$ such that :
\begin{equation} \label{eq:XnTransport}
\X_n = \{ \xx_n \in \rr^T: \txt\sum_{t \in\Tall } x\nt=E_{n} \text{ and } \lbx\nt \leq x\nt \leq \ubx\nt\} \ \neq \emptyset.
\end{equation}
\end{assumption}
In particular, $\X_n$ is convex and compact.  
Given an allocation $\pp$, the structure obtained on the matrix $(\xx\nt)\nt$, where sums of coefficients along columns and along rows are fixed, is often referred to as a {\em transportation problem}. The latter has many applications (see e.g.~\cite{aneja1979bicriteria,munkres1957algorithms}).
We focus on transportation problems in \Cref{sec:resourceallocation-disag,sec:APM}, while in \Cref{sec:generArbitraryIndividualSet}, we give a generalization of some of our results in the general case where $\X_n$ is a polyhedron.

Given a particular allocation $\pp \in \P$, the operator will be interested to know if this allocation is \emph{disaggregable}, that is, if there exists individual profiles $(\xx_n)_{n\in\Nall} \in \prod_n \X_n$ summing to $\pp$, or equivalently if the \emph{disaggregation problem} \eqref{pb:disag} has a solution.

 Following  \eqref{pb:disag}, the \textit{disaggregate} profile refers to $\xx$, while the \textit{aggregate} profile refers to the allocation $\pp$.
 Problem \eqref{pb:disag} may not always be feasible. 
Some necessary conditions for a disaggregation to exist, obtained by summing the individual constraints on $\Nall$, are the following \textit{aggregate } constraints:
\begin{subequations}
\label{eq:agg-cond}
\begin{align}  
& \pp^\tr \dsoneT= \EE^\tr \dsoneN \label{eq:agg-cond-sum} \\  
\text{ and } \ & 
 \llbx^\tr \dsoneN \leq \pp \leq \uubx^\tr  \dsoneN \ \label{eq:agg-cond-bounds}.
\end{align}
\end{subequations}
Because they are necessary, we assume that those aggregate conditions \eqref{eq:agg-cond} hold for vectors of $\P$, that is:
\begin{assumption} \label{assp:PverifAggregCond}
All vectors $\pp \in \P$ satisfy \eqref{eq:agg-cond}, that is, $\P \subset \{ \pp \in \rr^T \ | \ \eqref{eq:agg-cond} \text{ hold } \}$.
\end{assumption}
However, conditions \eqref{eq:agg-cond} are not sufficient in general, as explained in the the following section.

\subsection{Equivalent Flow Problem and Hoffman Conditions}

Owing to its special structure, the problem under study can be rewritten as as a flow problem in a graph, as stated in~\Cref{prop:flowProblem}. We refer the reader to the book \cite[Chapter 3]{cook2009combinatorial} for terminology and background. 
\begin{definition} \label{def:flowProblem}
Consider a directed graph $\G=(\V,\E)$ with vertex set $\V$, edge set $\E \subset \V \times \V$, demands  $d:\V \rightarrow \rr$ (where $d_v< 0$ means that $v$ is a production node), edge lower capacities  $\ell:\E \rightarrow \rr_+$ and upper capacities $u: \E \rightarrow \rr_+$. A flow on $\G$ is a function $\xx:\E \rightarrow \rr_+$ such that $\xx$ satisfies the capacity constraints, that is $ \forall e \in \E, \ell_e \leq x_e \leq u_e $,  and Kirchoff's law, that is, $\forall v \in \V, \sum_{e \in\delta_v^+ } x_e =d_v +\sum_{e \in\delta_v^-} x_e $, where $\delta^+_v$ (resp. $\delta^-_v$) is the set of edges ending at  (resp. departing from) vertex $v$.
\end{definition}
The following proposition is immediate:
\begin{proposition} \label{prop:flowProblem}
  Consider the bipartite graph  $\G$ whose set of vertices
  is the disjoint union $\V=\Tall \sqcup \Nall$
  and whose set of edges is $\E=\{(t,n) \}_{t\in \Tall, n \in \Nall}$. Define demands on nodes $\Tall$ by $d_t=-p_t $ and  demands on nodes $\Nall$ by $d_n=E_n$. Assign to each edge $(t,n)$ an upper capacity $ u\nt=\ubx\nt$ and lower capacity $\ell\nt=\lbx\nt$. 
Then, finding a solution $\xx$ to \eqref{pb:disag} is equivalent to finding a feasible flow in $\G$.
\end{proposition}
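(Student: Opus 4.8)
The plan is to establish the equivalence by exhibiting an explicit bijection between the feasible points of the disaggregation problem \eqref{pb:disag} and the feasible flows of $\G$, identifying the flow value $x_e$ carried by the edge $e=(t,n)$ with the scalar $x\nt$ entering the profile $\xx_n$. Under this identification the variable vectors on both sides live in $\rr^{NT}$, so the whole content of the proof reduces to checking that the two families of defining constraints coincide. Since each statement is an ``if and only if'' between equalities and box constraints, no genuine construction is needed beyond this dictionary.

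First I would translate Kirchhoff's law, using that $\G$ is bipartite with every edge oriented from a time node $t\in\Tall$ towards an agent node $n\in\Nall$. Each time node therefore carries only departing edges, so $\delta^+_t=\emptyset$ and $\delta^-_t=\{(t,n)\}_{n\in\Nall}$; conservation at $t$ reads $0=d_t+\sum_{n}x\nt=-p_t+\sum_n x\nt$, i.e.\ $\sum_n x\nt=p_t$, which is exactly the aggregation constraint \eqref{cons:disagfeas}, equivalently $\xx^\tr\dsoneN=\pp$ and hence $\xx\in\Ypp$. Symmetrically, each agent node carries only incoming edges, so $\delta^-_n=\emptyset$ and $\delta^+_n=\{(t,n)\}_{t\in\Tall}$; conservation at $n$ reads $\sum_t x\nt=d_n=E_n$, which is precisely the global demand equality defining $\X_n$ in \eqref{eq:XnTransport}. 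Matching next the capacity constraints $\ell\nt\leq x_e\leq u\nt$ with the box constraints $\lbx\nt\leq x\nt\leq\ubx\nt$ is immediate from the assignment $u\nt=\ubx\nt$ and $\ell\nt=\lbx\nt$. Collecting the two equality families and the box constraints shows that $\xx$ is a feasible flow in $\G$ if and only if $\xx\in\Ypp\cap\X$, which is the asserted equivalence.

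The only point requiring care, and the main (though minor) obstacle, is a compatibility of sign conventions: \Cref{def:flowProblem} demands a nonnegative flow $\xx:\E\to\rr_+$ with nonnegative lower capacities $\ell:\E\to\rr_+$, whereas \Cref{assp:Xn} permits $\llbx_n$ to have negative entries. When $\lbx\nt\geq 0$ for all $n,t$, which is the natural situation for power consumption, nonnegativity of the flow is automatic from $x\nt\geq\lbx\nt\geq 0$ and nothing further is needed. To cover the general case I would first reduce to this situation by the affine change of variables $x'\nt:=x\nt-\lbx\nt\geq 0$, which turns the lower capacities into $0$, the upper capacities into $\ubx\nt-\lbx\nt\geq 0$, and the node demands into $E_n-\sum_t\lbx\nt$ and $-p_t+\sum_n\lbx\nt$ at agent and time nodes respectively; this shifted instance carries nonnegative data, fits \Cref{def:flowProblem} verbatim, and is feasible exactly when the original one is, so the equivalence is preserved.
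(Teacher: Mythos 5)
Your proof is correct and carries out precisely the identification the paper has in mind when it declares the proposition ``immediate'' without writing a proof: Kirchhoff's law at the time nodes yields the aggregation constraint \eqref{cons:disagfeas}, Kirchhoff's law at the agent nodes yields the demand equality in \eqref{eq:XnTransport}, and the edge capacities coincide with the box constraints. Your additional observation about the nonnegativity conventions of \Cref{def:flowProblem} versus the possibly negative lower bounds allowed by \Cref{assp:Xn}, resolved by the standard shift $x'_{n,t} := x_{n,t} - \lbx_{n,t}$, is a legitimate point of care that the paper glosses over, and your reduction handles it correctly.
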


Hoffman \cite{hoffman1960some} gave a necessary and sufficient   condition for the flow problem to be feasible in a graph with \emph{balanced} demands, that is $d(\V) \eqd \sum_{{v\in\V}} d_v =0$ (total production matches total positive demand, in our case \eqref{eq:agg-cond-sum}). 
This generalizes a result of Gale (1957). 
The stated condition is intuitive: there cannot be a subset of nodes whose
demand exceeds its ``import capacity''. %
\begin{theorem}[{\cite{hoffman1960some}}] \label{th:hoffman}
Given a digraph $\mathcal{G}=(\V,\E)$ with demand $ d \in \rr^\V$ such that $d(\V) = 0$ and capacities $\ell \in (\rr \cup \{-\infty\})^\E$  and $u \in(\rr \cup {\infty})^\E$ with $\ell \leq u$, there exists a feasible flow $\xx \in \E \rightarrow \rr_+ $ on $\G$
if and only if: 
\begin{equation}
\label{eq:gale-condition}
\forall \mc{A} \subset \V, \ \ \sum_{e \in \delta^+({\mc{A}}) } u_e \geq \sum_{v \in \mc{A}} d_v+ \sum_{e \in \delta^+({\mc{A}^c})}  \ell_e \ ,
\end{equation}
where $\delta_+(\mc{A})\eqDef \{(u,v)\in\E | u\in {\mc{A}^c}, v\in \mc{A} \}$ is the set of edges coming to set $\mc{A}$ and ${\mc{A}^c}\eqDef \V \setminus \mc{A}$. 
\end{theorem}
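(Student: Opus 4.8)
The plan is to establish the two implications separately: necessity is a one-line summation, whereas sufficiency is obtained by reducing to the max-flow/min-cut theorem.

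\textbf{Necessity.} Assume a feasible flow $\xx$ exists and fix $\mc{A} \subset \V$. Summing Kirchoff's law over all $v \in \mc{A}$, every edge having both endpoints in $\mc{A}$ contributes $+x_e$ at its head and $-x_e$ at its tail and therefore cancels; only the edges crossing the boundary of $\mc{A}$ survive, giving
\[
\textstyle\sum_{v\in\mc{A}} d_v \;=\; \sum_{e\in\delta^+(\mc{A})} x_e \;-\; \sum_{e\in\delta^+(\mc{A}^c)} x_e .
\]
Bounding $x_e \le u_e$ on the entering edges $\delta^+(\mc{A})$ and $x_e \ge \ell_e$ on the leaving edges $\delta^+(\mc{A}^c)$ immediately yields \eqref{eq:gale-condition}. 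This direction uses only feasibility and the box constraints, not the balancedness hypothesis.

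\textbf{Sufficiency.} I would first eliminate the lower bounds: the substitution $x_e = \ell_e + x'_e$, with $0 \le x'_e \le u_e - \ell_e$, converts the problem into a flow problem with zero lower capacities, upper capacities $u_e - \ell_e$, and shifted demands $d'_v := d_v + \sum_{e\in\delta^-_v}\ell_e - \sum_{e\in\delta^+_v}\ell_e$; a telescoping check shows $\sum_v d'_v = \sum_v d_v = 0$, so balancedness is preserved. Next I augment the graph with a super-source $s$ and super-sink $t$: an arc $s\to v$ of capacity $-d'_v$ for each source ($d'_v<0$) and an arc $v\to t$ of capacity $d'_v$ for each sink ($d'_v>0$). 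A standard argument then shows that a feasible flow exists if and only if the maximum $s$--$t$ flow saturates all source arcs, i.e.\ attains the value $D := \sum_{v:\,d'_v>0} d'_v$ (by balancedness $D$ is also the total sink-arc capacity).

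By the max-flow/min-cut theorem it then suffices to show that every $s$--$t$ cut has capacity at least $D$. Such a cut is specified by the set $\mc{A}\subset\V$ of vertices lying on the sink side; its capacity is the sum of the source arcs entering $\mc{A}$, the sink arcs leaving $\mc{A}$, and the original arcs of $\delta^+(\mc{A})$ at capacity $u_e-\ell_e$. Writing out ``cut capacity $\ge D$'' and cancelling the sink-arc contributions reduces it to $\sum_{e\in\delta^+(\mc{A})}(u_e-\ell_e) \ge \sum_{v\in\mc{A}} d'_v$; substituting the definition of $d'_v$ and running the same boundary-edge bookkeeping as in the necessity step, the terms $\sum_{e\in\delta^+(\mc{A})}\ell_e$ cancel and one recovers exactly \eqref{eq:gale-condition}. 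The main obstacle is precisely this bookkeeping: one must track how the source/sink arcs split according to the sign of $d'_v$ over $\mc{A}$ versus $\mc{A}^c$, and verify that the $\ell_e$ corrections recombine so that the abstract min-cut inequality collapses onto Hoffman's subset inequality. Infinite capacities cause no difficulty: an entering edge with $u_e=+\infty$ (resp.\ a leaving edge with $\ell_e=-\infty$) makes \eqref{eq:gale-condition} vacuous, matching the fact that the corresponding cut becomes non-binding.

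Conceptually, the reason the infeasibility certificate can always be taken to be the indicator of a vertex subset, rather than an arbitrary dual vector, is the total unimodularity of the node--arc incidence matrix $M$ of the system $Mx=d$. Applying Farkas' lemma to $\{\,Mx=d,\ \ell\le x\le u\,\}$ and invoking total unimodularity, an infeasibility certificate can be chosen with entries in $\{0,\pm1\}$, i.e.\ of the form $\mathds{1}_{\mc{A}}$; this furnishes an alternative, duality-based route to the same conclusion and explains why \eqref{eq:gale-condition} ranges over all subsets $\mc{A}\subset\V$.
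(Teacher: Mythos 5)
The first thing to say is that the paper itself contains no proof of \Cref{th:hoffman}: it is quoted from Hoffman's 1960 article and used as a black box (only \Cref{prop:hoffmanDisag} is derived from it). So your argument must be judged on its own merits, and in substance it is the correct, standard one. Necessity by summing Kirchhoff's law over $\mc{A}$ and bounding the crossing edges by their capacities is right, and it indeed does not use balancedness. For sufficiency, your reduction checks out in the finite-capacity case: with $x_e=\ell_e+x'_e$, shifted demands $d'_v$, and super-source/super-sink arcs of capacities $-d'_v$ and $d'_v$, the cut whose sink-side vertex set is $\mc{A}$ has capacity $\sum_{v\in\mc{A},\,d'_v<0}(-d'_v)+\sum_{v\notin\mc{A},\,d'_v>0}d'_v+\sum_{e\in\delta^+(\mc{A})}(u_e-\ell_e)$, and requiring this to be at least $D$ is, after substituting $d'_v$ and cancelling $\sum_{e\in\delta^+(\mc{A})}\ell_e$, exactly \eqref{eq:gale-condition}; combined with the cut $\{s\}$, whose capacity equals $D$, max-flow/min-cut then yields a flow saturating all source arcs. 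One wording slip: the sink arcs crossing this cut are the arcs $v\to t$ with $v\in\mc{A}^c$ (on the source side), not those ``leaving $\mc{A}$''; your subsequent algebra uses the correct set, so this is cosmetic.

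Two substantive caveats. First, your dismissal of infinite capacities is not a proof: if $\ell_e=-\infty$, then the substitution $x_e=\ell_e+x'_e$, the shifted demands $d'_v$, and the capacities $u_e-\ell_e$ are all undefined, so the reduction itself breaks down, not merely one cut; the remark about non-binding cuts cannot repair a construction that no longer exists. The clean fix is truncation: replace $+\infty$ by $M$ and $-\infty$ by $-M$; since $\V$ and $\E$ are finite and the finite data are fixed, every instance of \eqref{eq:gale-condition} that involved an infinity holds for the truncated data once $M$ is large enough, the remaining instances are unchanged, and a feasible truncated flow is feasible for the original problem. Second, as transcribed in the paper the theorem asks for $\xx$ with values in $\rr_+$ while simultaneously allowing $\ell_e<0$ or $\ell_e=-\infty$; under that literal reading the equivalence is false (a single arc $a\to b$ with $d_a=1$, $d_b=-1$, $\ell_e=-5$, $u_e=2$ satisfies \eqref{eq:gale-condition} yet admits no nonnegative flow). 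What you prove, correctly, is the real-valued version $\ell_e\le x_e\le u_e$, which is Hoffman's actual statement and the only one the paper needs, since its lower capacities are nonnegative. Finally, the closing total-unimodularity/Farkas paragraph is a reasonable heuristic but not a proof as written: a certificate with entries in $\{0,\pm1\}$ is not automatically the indicator vector of a vertex subset, and that step would need its own argument.
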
 
\Cref{prop:hoffmanDisag} translates \Cref{th:hoffman} in our framework:
\begin{proposition} \label{prop:hoffmanDisag}
  Given an aggregate resource allocation $\pp=(p_t)_{t\in \Tall} \in \P$,
  the disaggregation problem is feasible, meaning that $\pp\in \Popt$,
  iff: 
\begin{equation} \label{eq:hoffmanCondReverse}
\forall \Tcut \subset \Tall, \forall \Ncut  \subset \Nall, \ \  
\sum_{t \in \Tcut} p_t - \sum_{n \in \Ncut } E_n  + \sum_{t \notin \Tcut, n \in \Ncut } \lbx\nt \leq \sum_{t \in \Tcut, n \notin \Ncut }  \ubx\nt \, .
\end{equation}
\end{proposition}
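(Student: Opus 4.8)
The plan is to invoke \Cref{prop:flowProblem} to recast disaggregability of $\pp$ as the existence of a feasible flow in the bipartite graph $\G$, and then to apply Hoffman's \Cref{th:hoffman} verbatim, translating the abstract cut condition \eqref{eq:gale-condition} into the explicit inequality \eqref{eq:hoffmanCondReverse}. Before doing so I would verify the hypothesis of \Cref{th:hoffman}, namely that the demands are balanced: $d(\V)=\sum_{t\in\Tall}(-p_t)+\sum_{n\in\Nall}E_n=0$ is precisely the aggregate condition \eqref{eq:agg-cond-sum}, which holds for every $\pp\in\P$ by \Cref{assp:PverifAggregCond}. Consequently $\pp\in\Popt$ if and only if \eqref{eq:gale-condition} holds for all $\mc{A}\subset\V$, and the whole proof reduces to evaluating that family of inequalities in the transportation setting.

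The core of the argument is the explicit computation of the two edge cuts appearing in \eqref{eq:gale-condition} for a generic subset. Since $\V=\Tall\sqcup\Nall$, I write $\mc{A}=\Tcut'\sqcup\Ncut$ with $\Tcut'\subset\Tall$ and $\Ncut\subset\Nall$. With the edges of $\G$ oriented from $\Tall$ to $\Nall$ (the orientation implicit in \Cref{prop:flowProblem}, the one making Kirchhoff's law reproduce the demand and aggregation constraints), an edge $(t,n)$ enters $\mc{A}$ exactly when its tail $t$ lies in $\mc{A}^c$ and its head $n$ lies in $\mc{A}$, i.e.\ when $t\notin\Tcut'$ and $n\in\Ncut$; hence $\sum_{e\in\delta^+(\mc{A})}u_e=\sum_{t\notin\Tcut',\,n\in\Ncut}\ubx\nt$. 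Symmetrically $\delta^+(\mc{A}^c)=\{(t,n):t\in\Tcut',\,n\notin\Ncut\}$, so $\sum_{e\in\delta^+(\mc{A}^c)}\ell_e=\sum_{t\in\Tcut',\,n\notin\Ncut}\lbx\nt$. Substituting these, together with $\sum_{v\in\mc{A}}d_v=-\sum_{t\in\Tcut'}p_t+\sum_{n\in\Ncut}E_n$, into \eqref{eq:gale-condition} produces one inequality for each pair $(\Tcut',\Ncut)$.

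It then remains to match this family with \eqref{eq:hoffmanCondReverse}. Because the condition is quantified over all subsets, I am free to relabel, and the clean choice is to apply it to the complementary sets, replacing $\Tcut'$ by $\Tcut^c$ and $\Ncut$ by $\Ncut^c$. Under this substitution the capacity terms become exactly $\sum_{t\in\Tcut,\,n\notin\Ncut}\ubx\nt$ and $\sum_{t\notin\Tcut,\,n\in\Ncut}\lbx\nt$, reproducing the two outer terms of \eqref{eq:hoffmanCondReverse}, while the demand term turns into $\sum_{n\notin\Ncut}E_n-\sum_{t\notin\Tcut}p_t$. The last step is to recognize this as $\sum_{t\in\Tcut}p_t-\sum_{n\in\Ncut}E_n$, which is legitimate because the two expressions differ by $\sum_{n\in\Nall}E_n-\sum_{t\in\Tall}p_t$, equal to $0$ by \eqref{eq:agg-cond-sum}. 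I expect the only delicate point to be this bookkeeping: fixing the edge orientation consistently with \Cref{prop:flowProblem}, tracking which index set is complemented in $\delta^+(\mc{A})$ versus $\delta^+(\mc{A}^c)$, and invoking the balance equation at the right moment to absorb the constant terms. Once these are handled, the equivalence with \eqref{eq:hoffmanCondReverse} is immediate.
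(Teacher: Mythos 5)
Your proof is correct and is essentially the paper's own argument: the paper likewise applies Hoffman's condition \eqref{eq:gale-condition} with $\mc{A}=\Tcut^c\cup\Ncut^c$ and uses the balance identity $d(\V)=0$ to rewrite the demand term, which is exactly your complementation-plus-bookkeeping step. You simply spell out the cut computations and the edge orientation in more detail than the paper's one-line proof.
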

\begin{proof}
We apply \eqref{eq:gale-condition} with $\mc{A} \eqDef \Tcut^c \cup \Ncut^c$ and use the equality $d(\V)=0=\sum_{v\in \mc{A}} d_v+ \sum_{v\in\mc{A}^c} d_v$.
\end{proof}

From \Cref{th:hoffman} or \Cref{prop:hoffmanDisag} above, one can see that the aggregate constraints~\eqref{eq:agg-cond} are in general not sufficient to ensure that the disaggregation problem has a solution. 

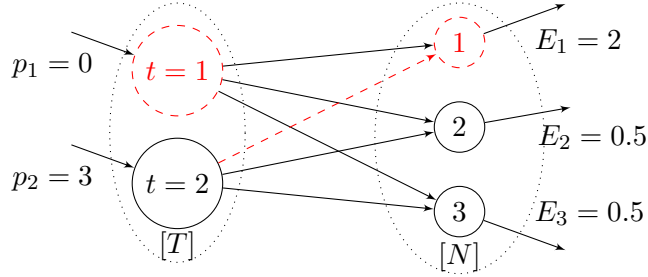
\begin{figure}
\begin{center}
\begin{tikzpicture}[scale=0.75]
\node [draw,circle,red,dashed] (h1) at (0,2) {$t=1$};
\node[above left = 1cm of h1] (p1) {};
\draw [-latex'] ($ (h1) + (160:2) $) -- node [below left] {$p_1=0$} (h1);
\node [draw,circle] (h2) at (0,0) {$t=2$};
\node[above left = 1cm of h2] (p2) {};
\draw [-latex'] ($ (h2) + (160:2) $) -- node [below left] {$p_2=3$} (h2);

\node [draw,circle,red,dashed] (i1) at (5,2.5) {$1$};
\draw [-latex'] (i1) -- node [below right] {$E_1=2$} ($ (i1) + (20:2) $) ;
\node [draw,circle] (i2) at (5,1) {$2$};
\draw [-latex'] (i2) -- node [below right] {$E_2=0.5$} ($ (i2) + (10:2) $) ;
\node [draw,circle] (i3) at (5,-0.5) {$3$};
\draw [-latex'] (i3) -- node [above right] {$E_3=0.5$} ($ (i3) + (-20:2) $) ;

\draw [-latex'] (h1) --  node [text width=2.5cm,midway,above,align=left ] {} (i1);
\draw [-latex'] (h1) --  node [text width=2.5cm,midway,above,align=left ] {} (i2);
\draw [-latex'] (h1) --  node [text width=2.5cm,midway,above,align=left ] {} (i3);
\draw [-latex',red,dashed] (h2) --  node [text width=2.5cm,midway,below,align=left ] {} (i1);
\draw [-latex'] (h2) --  node [text width=2.5cm,midway,below,align=left ] {} (i2);
\draw [-latex'] (h2) --  node [text width=2.5cm,midway,below,align=left ] {} (i3);

\draw [dotted] (0,0.9) ellipse (1.2cm and 2.3cm)  ;
\node at (0,-1.1) {$\Tall$};

\draw [dotted] (5,0.8) ellipse (1.5cm and 2.4cm)  ;
\node at (5,-1.3) {$\Nall$};

\end{tikzpicture}
\end{center}
\caption{Example of a flow representation of the disaggregation problem ($T=2$ and $N=3$, $\llbx=0$, $\uubx=1$). \textit{Here, the aggregate constraints \eqref{eq:agg-cond} are verified, but  condition \eqref{eq:gale-condition} written with $\mc{A}=\{ t_1, \n_1\}$ (dashed nodes) does not hold.} \vspace{-0.5cm}}
\label{fig:graphDisagregationEx}
\end{figure}

For a given set $\Tcut$, there is a choice of $\Ncut$ which leads to the strongest inequality \eqref{eq:hoffmanCondReverse}, namely:
\begin{align} \label{eq:ineq2powTv1}
&\sum_{t \in \Tcut} p_t  \leq  \min_{\Ncut  \subset \Nall} \left\{ \sum_{n \in \Ncut } E_n - \sum_{t \notin \Tcut, n \in \Ncut } \lbx\nt + \sum_{t \in \Tcut, n \notin \Ncut }  \ubx\nt \right\} ,  %
\end{align}
In this way, we get $2^T-2$ inequalities corresponding to the proper subsets $\Tcut \subset \Tall$.
Moreover, in general, these $2^T-2$ inequalities are not redundant. Although this is not stated in \cite{hoffman1960some}, this is a classical result whose proof is elementary.

\section{Disaggregation Based on APM}
\label{sec:APM}

\subsection{Generation of Hoffman's Cuts by APM}
\label{subsec:generateHoffmanAPM}
In this section, we  propose an algorithm that solves \eqref{pb:global}
while preserving the privacy of the agent's constraints $\X_n$ and individual profile $\xx_n \in \rr^T$.
 To do this, the proposed algorithm is implemented in a decentralized manner and relies on the method of {\em alternate projections} (APM) to solve the disaggregation problem \eqref{pb:disag}.

Let us consider the polyhedron enforcing the agents constraints:
\begin{align*}
& \X \eqDef %
\X_1 \times \dots \times \X_N \ %
\text{ where } \X_n \eqDef \left\{ \xx_n \in \rr^{T}_+ \ | \  \textstyle\sum_{t\in \Tall} \x\nt = E_n  \text{ and } \forall t, \  \lbx\nt \leq \x\nt \leq \ubx\nt  \right\} . 
\end{align*}
Besides, given an allocation $\pp \in \P$, we consider the set of  profiles aggregating to $\pp$ :
\begin{align*}
\Y_{\pp} \eqd \big\{ \xx \in \rr^{NT} \ | \ \forall t \in \Tall ,  \txt\sum_{n \in \Nall}  \x\nt = p_t \big\} \ .
\end{align*}
Note that $\Y_{\pp}$ is an affine subspace of $\rr^{NT}$ (to be distinguished from $ \P$ which is a subset of $\rr^T$), and that $\Y_{\pp} \cap \X$  is empty iff $\pp \notin \Popt $, according to the definition of $\Popt$ in \eqref{eq:def-Popt}.
The idea of the proposed algorithm is to build a finite sequence of decreasing subsets $(\P^{(s)})_{ 0 \leq s \leq S}$  such that:
\begin{equation*}
\P =\P^{(0)} \supset \P^{(1)} \supset \dots \supset \P^{(S)} \supset \Popt \ .
\end{equation*}
At each iteration, a new aggregate resource allocation $\pp^{(s)}$ is obtained by solving an instance of the master problem introduced in \eqref{pb:master-Q} with $\Q=\P^{(s)}$:
\begin{subequations} \label{pb:master}
\begin{align}
& \min_{\pp \in \rr^T} f(\pp) \\
\text{s.t.} \ & \pp \in \P^{(s)} \ .
\end{align}
\end{subequations}
In the sequel,
we will refer to~\eqref{pb:master} as the \emph{master problem}. 
Our procedure relies on the following immediate observation:
\begin{proposition}
  If $\pp^{(s)}$ is a solution of \eqref{pb:master}
  and $\xx \in \Y_{\pp^{(s)}}\cap \X $, then $(\pp^{(s)},\xx)$ is an optimal solution of the initial problem \eqref{pb:global}. \hfill \qed
\end{proposition}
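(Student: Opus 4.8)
The plan is to unwind the definitions and verify the three defining properties of an optimal solution of~\eqref{pb:global}: feasibility of the pair $(\pp^{(s)},\xx)$, and optimality of its cost. First I would note that since $\xx \in \Y_{\pp^{(s)}}\cap \X$, by definition of the Cartesian product $\X = \prod_n \X_n$ we have $\xx_n \in \X_n$ for every $n \in \Nall$, which is exactly constraint~\eqref{cons:indfeas}; and by the definition of $\Y_{\pp^{(s)}}$ in~\eqref{eq:defY} we have $\sum_{n\in\Nall} x\nt = p^{(s)}_t$ for every $t \in \Tall$, which is constraint~\eqref{cons:disagfeas}. It remains to observe that $\pp^{(s)} \in \P$: this holds because $\pp^{(s)} \in \P^{(s)} \subset \P = \P^{(0)}$, using the decreasing nesting of the sets $\P^{(s)}$. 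Hence $(\pp^{(s)},\xx)$ is feasible for~\eqref{pb:global}.

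The crux is the optimality argument, which I would handle by comparing the feasible set of~\eqref{pb:global} with $\P^{(s)}$. Let $(\pp',\xx')$ be any feasible point of~\eqref{pb:global}. Then $\xx' \in \X$ and $\sum_n \xx'_n = \pp'$, so by the definition of $\Popt$ in~\eqref{eq:def-Popt} we get $\pp' \in \Popt$. The key inclusion is $\Popt \subset \P^{(s)}$, which is part of the standing construction $\P = \P^{(0)} \supset \dots \supset \P^{(S)} \supset \Popt$; this guarantees that $\pp'$ is feasible for the master problem~\eqref{pb:master}. Since $\pp^{(s)}$ is by hypothesis a minimizer of $f$ over $\P^{(s)}$, we obtain $f(\pp^{(s)}) \leq f(\pp')$. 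As the objective of~\eqref{pb:global} depends only on $\pp$, this shows $(\pp^{(s)},\xx)$ attains a cost no larger than that of any feasible competitor, and combined with its own feasibility it is therefore optimal.

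I do not expect any genuine obstacle here; the proposition is flagged as \emph{immediate} in the text, and indeed the whole argument is a matter of chaining set inclusions. The only point requiring mild care is making explicit the inclusion $\Popt \subset \P^{(s)}$, which is the property that makes the whole decomposition scheme valid: it must be invoked rather than re-derived, since it is the standing invariant maintained by the cut-generation procedure described earlier. Correspondingly, the hidden content of the statement is precisely that the master problem~\eqref{pb:master} is a \emph{relaxation} of~\eqref{pb:global} whose optimum happens to be realized by a disaggregable allocation, so that relaxation and original problem share the same optimal value along this witness.
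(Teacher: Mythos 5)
Your proof is correct and matches the paper's intent: the paper marks this proposition as immediate (it carries only a \qed with no written argument), and your unwinding—feasibility from the definitions of $\X$ and $\Y_{\pp^{(s)}}$, plus optimality from the standing invariant $\Popt \subset \P^{(s)}$ which makes the master problem a relaxation of \eqref{pb:global}—is exactly the chain of inclusions the authors have in mind. You also correctly flag that the invariant $\Popt \subset \P^{(s)}$ must be invoked (it is maintained by the cut-generation result, \Cref{th:cutN0T0violated}) rather than re-proved here, which is the only point of substance in the statement.
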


Having in hands a solution $\pp^{(s)}$, we can
check whether $\Y_{\pp^{(s)}}\cap \X \neq \emptyset$, and then find
a vector $\xx$ in this set, 
using APM on $\X$ and $\Y_{\pp^{(s)}}$, as described in \Cref{algo:vonNeumannProj} below (where $\Y= \Y_{\pp}$).

\begin{algorithm}
\begin{algorithmic}[1]
\Require Start with $\yy^{(0)}$, $k=0$ , $\epscvg$, 	 a norm $\norm{\cdot}$ on $\rr^{NT}$ \;
\Repeat
 \State  $\xx^{(k+1)} \leftarrow P_{\X}( \yy^{(k)})$ \;
  \State  $\yy^{(k +1)} \leftarrow P_{\Y}(\xx^{(k+1)})$ \;
\State  $k \leftarrow k+1$ \;
\Until{$\norm{\xx^{(k)} -\xx^{(k-1)} } < \epscvg $}
\end{algorithmic}
\caption{Alternate Projections Method (APM)}
\label{algo:vonNeumannProj}
\end{algorithm}
The idea of using  cyclic projections to compute a point in the intersection of two sets comes from von Neumann \cite{von1950functional}, who applied it to the case of affine subspaces.
We next recall the basic convergence result concerning the APM. 
\begin{theorem}[\cite{GUBIN19671}] \label{thm:cvgAP}Let $\X$ and $\Y$ be two closed convex sets with $\X$ bounded, and let  $(\xE\kexp)_k $ and $(\xP\kexp)_k$ be the two infinite sequences generated by the APM on $\X$ and $\Y$ (\Cref{algo:vonNeumannProj}) with $\epscvg=0$. Then there exists $\xx^\infty \in \X$ and $\yy^\infty\in \Y $ such that:
\begin{subequations}
\begin{align}
& \xE\kexp \underset{k\rightarrow \infty}{\longrightarrow} \xx^\infty \ , \quad \xP\kexp \underset{k\rightarrow \infty}{\longrightarrow} \yy^\infty ; \\
& \norm{\xx^\infty - \yy^\infty }_2= \min_{{\xx \in \X , \yy \in \Y } } \norm{ \xE - \xP}_2 \ . 
\end{align}
\end{subequations}
In particular,  if  $\X \cap \Y \neq \emptyset$, then $(\xE\kexp)_k $ and $(\xP\kexp)_k$ converge to a same point $\xx^\infty \in \X \cap \Y$.

\end{theorem}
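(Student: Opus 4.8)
The plan is to establish the theorem through the firm nonexpansiveness of Euclidean projections, handling the intersecting and non-intersecting cases in one stroke by working with a \emph{nearest-point pair}. Writing $T\eqd P_\X\circ P_\Y$, the $\xx$-iterates of \Cref{algo:vonNeumannProj} form the orbit $\xx\kexp$ of $T$ (for $k\ge 1$) while $\yy\kexp=P_\Y(\xx\kexp)$. First I would recall the two facts I rely on. For a closed convex set $\mc{C}\subset\rr^{NT}$ and any $z,z'$, the projection is characterized by $\langle z-P_\mc{C}(z),\, c-P_\mc{C}(z)\rangle\le 0$ for all $c\in\mc{C}$, from which firm nonexpansiveness follows:
\begin{equation*}
\norm{P_\mc{C}(z)-P_\mc{C}(z')}^2 \le \norm{z-z'}^2 - \norm{\big(z-P_\mc{C}(z)\big)-\big(z'-P_\mc{C}(z')\big)}^2 .
\end{equation*}
Since $\X$ is compact and $\Y$ is closed, the Minkowski difference $\X-\Y$ is closed and convex, so it has a minimal-norm element; this produces a pair $(\bar\xx,\bar\yy)\in\X\times\Y$ with $\norm{\bar\xx-\bar\yy}=d\eqd\min_{\xx\in\X,\yy\in\Y}\norm{\xx-\yy}$. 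Because $\bar\xx-y\in\X-\Y$ for every $y\in\Y$, such a pair automatically satisfies $\bar\yy=P_\Y(\bar\xx)$ and, symmetrically, $\bar\xx=P_\X(\bar\yy)$, so that $\bar\xx$ is a fixed point of $T$.

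Next I would run the firm-nonexpansiveness inequality twice, comparing the iterates to this fixed pair: applying it to $P_\Y$ at $\xx\kexp,\bar\xx$, then to $P_\X$ at $\yy\kexp,\bar\yy$, and chaining the two estimates gives
\begin{equation*}
\norm{\xx^{(k+1)}-\bar\xx}^2 \le \norm{\xx\kexp-\bar\xx}^2 - \norm{(\xx\kexp-\yy\kexp)-(\bar\xx-\bar\yy)}^2 - \norm{(\yy\kexp-\xx^{(k+1)})-(\bar\yy-\bar\xx)}^2 .
\end{equation*}
In particular $\big(\norm{\xx\kexp-\bar\xx}\big)_k$ is nonincreasing (Fejér monotonicity with respect to the nearest pair), and telescoping over $k$ shows both squared residuals are summable, hence tend to $0$. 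This yields at once the asymptotic regularity $\xx^{(k+1)}-\xx\kexp\to 0$ and the convergence of the displacement $\xx\kexp-\yy\kexp\to\bar\xx-\bar\yy$, so that $\norm{\xx\kexp-\yy\kexp}\to d$.

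Finally I would upgrade convergence of the gap to convergence of the iterates. Boundedness of $\X$ gives a cluster point $\xx^\infty$ of $(\xx\kexp)_k$; by continuity of $T$ together with asymptotic regularity, $\xx^\infty$ is itself a fixed point of $T$, hence $(\xx^\infty,P_\Y(\xx^\infty))$ is a nearest pair. Re-running the Fejér argument with this particular fixed point shows $\norm{\xx\kexp-\xx^\infty}$ is nonincreasing with $0$ as a subsequential limit, so the whole sequence converges to $\xx^\infty$; then $\yy\kexp=P_\Y(\xx\kexp)\to P_\Y(\xx^\infty)\eqd\yy^\infty$ by continuity. The identity $\norm{\xx^\infty-\yy^\infty}=d$ is the standard fact that a mutual-projection pair of convex sets realizes their distance: writing $a=\xx-\xx^\infty$, $b=\yy-\yy^\infty$, $c=\xx^\infty-\yy^\infty$ and using the two projection inequalities, one checks $\langle a-b,c\rangle\ge 0$, whence $\norm{\xx-\yy}^2\ge\norm{\xx^\infty-\yy^\infty}^2$ for all $\xx\in\X,\yy\in\Y$. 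The last assertion is then immediate, since $\X\cap\Y\neq\emptyset$ forces $d=0$ and hence $\xx^\infty=\yy^\infty\in\X\cap\Y$.

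I expect the only genuine obstacle to be the non-intersecting case: one must produce a fixed point of $T$ \emph{before} the Fejér machinery can start, and this is precisely where compactness of $\X$ is essential, guaranteeing that $\X-\Y$ is closed and that the minimal distance is attained. Once a nearest pair is in hand, the remaining steps — firm nonexpansiveness, telescoping, and the cluster-point argument — are routine.
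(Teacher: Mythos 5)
Your proof is correct, but there is nothing in the paper to compare it against: the paper states this theorem as an imported result, citing Gubin--Polyak--Raik \cite{GUBIN19671}, and gives no proof of it. Your argument is the classical self-contained route (essentially the Cheney--Goldstein fixed-point/Fej\'er argument for two sets), and its steps are sound: the nearest pair exists because compactness of $\X$ makes $\X-\Y$ closed, so the minimal-norm element is attained and yields a mutual-projection pair, i.e.\ a fixed point of $T=P_\X\circ P_\Y$; chaining firm nonexpansiveness of $P_\Y$ and $P_\X$ gives Fej\'er monotonicity with summable residuals, hence asymptotic regularity and $\norm{\xx\kexp-\yy\kexp}\to d$; a cluster point of the bounded $\xx$-sequence is then a fixed point of $T$, and re-anchoring the Fej\'er inequality at that fixed point upgrades subsequential convergence to convergence of the whole sequence, with $\yy\kexp=P_\Y(\xx\kexp)\to P_\Y(\xx^\infty)$ by continuity. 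The one delicate claim --- that \emph{any} fixed point of $T$ gives a pair realizing the minimal distance, which you need both to re-anchor the Fej\'er argument and to obtain the final identity --- is correctly disposed of by your computation with the two variational inequalities ($\langle a-b,c\rangle\geq 0$). You also place the boundedness hypothesis exactly where it is needed: attainment of the distance (closedness of $\X-\Y$) and existence of cluster points; without it the distance need not be attained and the statement can fail. What your proof buys over the paper's citation is a self-contained, finite-dimensional argument that makes the role of each hypothesis visible; what the citation buys the authors is brevity and the greater generality of the original reference.
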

The convergence theorem is illustrated in \Cref{fig:cvgAPM} in the case where $\X \cap \Y= \emptyset$, that is, when the disaggregation problem \eqref{pb:disag} is not feasible.
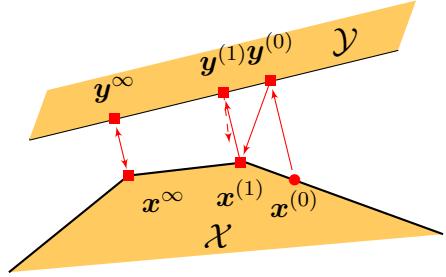
\begin{figure}[ht!]
\vspace{-0.5cm}
\begin{center}
\begin{tikzpicture}[rotate=-20,scale=0.7]

\draw [thick,draw= black, name path=lineY] (-2,-2) --   (4,2);
\path [ fill={rgb:orange,1;yellow,2;pink,3}] (-2,-1) -- (-2,-2) -- (4,2) -- (4,3) ;
\node[scale=1.2 ] at (3,1.8) {$\Y$} ;

\path [draw=black, fill={rgb:orange,1;yellow,2;pink,3}, thick,name path=lineX] (-1.5,-4.5) -- (0,-2) -- (2,-1) --   (6,-1);
\node [scale=1.2]at (2,-2.5) {$\X$};

\node [scale=2,fill=red, inner sep=1pt,label=-60:$\xx^\infty$]  (xinf) at (0,-2) {};

\draw[name path=line 1, opacity=0] (xinf) -- ($(xinf)+(-2,3)$);

\path [name intersections={of=lineY and line 1,by=yinf}];
\node [scale=2,fill=red, inner sep=1pt,label=90:$\yy^\infty$] at (yinf) {};
\draw [latex'-latex',red, shorten >=3pt] (xinf) -- (yinf);

\node (x0) at (3,-1) {};
\draw (x0) node [below  ]  {$\xx^{(0)}$} node [red] {$\bullet$};

\draw[name path=linex0y0, opacity=0] (x0) -- ($(x0)+(-2,3)$);
\path [name intersections={of=lineY and linex0y0,by=y0}];
\node [scale=2,fill=red, inner sep=1pt,label=90:$\yy^{(0)}$] at (y0) {};
\draw [-latex',red, shorten >=3pt] (x0) -- (y0);

\draw[name path=liney0x1, opacity=0] (y0) -- ($(y0)+(0,-3)$);
\path [name intersections={of=lineX and liney0x1,by=x1}];
\node [scale=2,fill=red, inner sep=1pt,label=-90:$\xx^{(1)}$] at (x1) {};
\draw [-latex',red, shorten >=3pt] (y0) -- (x1);

\draw[name path=linex1y1, opacity=0] (x1) -- ($(x1)+(-2,3)$);
\path [name intersections={of=lineY and linex1y1,by=y1}];
\node [scale=2,fill=red, inner sep=1pt,label=90:$\yy^{(1)}$] at (y1) {};
\draw [-latex',red, shorten >=3pt] (x1) -- (y1);

\draw [-latex',red,dashed, shorten >=3pt] (y1) -- ($(y1)+(0.5,-1)$);

\end{tikzpicture}
\end{center}
\caption{Alternate projections method (APM) on two sets $\X$ and $\Y$. \textit{When $\X \cap \Y = \emptyset$, APM cycles over two points $\xx^\infty$ and $\yy^\infty$.}\vspace{-0.6cm}}
\label{fig:cvgAPM}
\end{figure}
The idea of the algorithm proposed in this paper is, when $\Y_{\pp^{(s)}}\cap \X = \emptyset$, to use the resulting vectors $\xEinf $ and $\xPinf$ to construct a new subset $\P^{(s+1)}$ by adding a constraint of type \eqref{eq:hoffmanCondReverse} to $\P^{(s)}$: indeed, from \Cref{prop:hoffmanDisag}, we know that, if $\Y_{\pp^{(s)}}\cap \X = \emptyset$, there exists at least one violated inequality~\eqref{eq:hoffmanCondReverse}.

The difficulty is to guess such a violated inequality among the $2^{T}$ possible inequalities. It turns out that, using the output of APM, we can build such an inequality.

Suppose that we obtain $\xx^\infty \neq \yy^\infty $ as defined  in \Cref{thm:cvgAP}: we get a periodic cycle of APM, that is, we have $\xx^\infty= P_\X(\yy^\infty)$ and $\yy^\infty= P_\Y(\xx^\infty)$, and the couple $(\xx^\infty, \yy^\infty)$  is the solution of the following optimization  problem, with given parameters $(E_n)_n$ and $(p_t)_t$.
\begin{subequations} \label{pb:minSquare}
\begin{align}
&\min_{\xx,\yy} \frac{1}{2} \norm{\xx-\yy}_2^2 \\
&\forall n \in \Nall, \ \sum_{t\in\Tall} {x}\nt= E_n & (\lambda_n) \label{eq:minSquare:consE}\\
&\forall n \in \Nall, \forall t \in \Tall, \ \lbx\nt \leq {x}\nt \leq \ubx\nt   & (\umu\nt, \omu\nt) \label{eq:minSquare:consBounds}\\
& \forall t \in \Tall, \ \sum_{n\in\Nall} {y}\nt = p_t  & (\nu_t) \ \label{eq:minSquare:consp},
\end{align}
\end{subequations}
where $\lambda_n \in \rr$, $\umu\nt, \omu\nt \in \rr_+$ and  $\nu_t \in \rr$ are the Lagrangian multipliers associated to the constraints \eqref{eq:minSquare:consE},\eqref{eq:minSquare:consBounds},\eqref{eq:minSquare:consp},
with the associated Lagrangian function:
\begin{equation*}
\begin{split}
\L(\xx,\yy,  \llam,\mmu,\nnu)&=  \frac{1}{2} \norm{\xx-\yy}_2^2 - \llam^\tr \left( \txt\sum_t {x}\nt- E_n \right)_n  
 - \bm{\umu}^\tr (\xx - \llbx)\\ & \qquad\qquad\qquad\qquad -  \bm{\omu}^\tr(\uubx - {\xx})   - \nnu^\tr \left(\txt\sum_n {\yy}_n -\pp\right)  .
\end{split}
\end{equation*}
The stationarity condition of the Lagrangian with respect to the variable $y\nt$ yields:
 \begin{equation} \label{eq:stationarityKKT:ynt}
\forall n\in\Nall, \forall t \in\Tall, \ \y\nt = \x\nt + \nu_t   \ . 
\end{equation}

 Let us consider the sets  $\Tcut\subset \Tall $ and $\Ncut \subset \Nall$  defined from the output $(\xx^\infty, \yy^\infty)$ of APM on $\X$ and $\Y_{\pp}$ as follows:
 \begin{subequations}
   \label{eq:H0N0withoutbounds}
   \begin{align} \Tcut &\eqDef \big\{ t \in \Tall \ |  \ \exists n \in \Nall,\  {\y^\infty\nt} > \ubx\nt \big\} \enspace ,
     \\ \Ncut &\eqDef \big\{ n \in \Nall\ | \  E_n - \txt\sum_{t\notin \Tcut} \lbx\nt - \sum_{t\in \Tcut} \ubx\nt <0 \big\}  \enspace .
     \end{align}
\end{subequations}
In \Cref{th:cutN0T0violated} below, we show that applying the inequality \eqref{eq:hoffmanCondReverse} with the sets $
 \Tcut$ and $\Ncut $  defined in \eqref{eq:H0N0withoutbounds}  defines  a valid  inequality for the disaggregation problem violated by the current allocation $\pp$. 

The intuition behind the definition of $\Tcut$ and $\Ncut$ in \eqref{eq:H0N0withoutbounds} is the following: $\Tcut$ is the subset of resources for which there is an over supply (which overcomes the upper bound for at least one agent). Once  $\Tcut$ is defined,  $\Ncut$ is the associated subset of $\Nall$  minimizing the right hand side of \eqref{eq:ineq2powTv1}. Indeed, \eqref{eq:ineq2powTv1} can be rewritten as: 
\begin{equation*}
  \sum_{t \in \Tcut} p_t  \leq  \min_{\Ncut  \subset \Nall} \left\{ \sum_{n \in \Ncut } \Big( E_n - %
  \sum_{t \notin \Tcut} \lbx\nt  -\sum_{t \in \Tcut}  \ubx\nt \Big)\right\}   + \sum_{t \in \Tcut, n \in \Nall}  \ubx\nt .
\end{equation*}

\noindent The \Cref{th:cutN0T0violated} below is the key result on which relies the algorithm proposed in this paper.
\begin{theorem}\label{th:cutN0T0violated}
Consider the sequence of iterates $(\xE^{(k)},\xP^{(k)})_{k\in\nit}$ generated by APM on $\X$ and $\Y_{\pp}$ (see \Cref{algo:vonNeumannProj}). Then one of the following holds:
\begin{enumerate}[itemindent=0.7cm,label=(\roman*)]
\item if $ \X \cap \Y_{\pp} \neq \emptyset$, then $\xE^{(k)}, \xP^{(k)}  \underset{k \rightarrow \infty }{\longrightarrow} \xxinf \in \X \cap \Y_{\pp} $;
\item otherwise, if $ \X \cap \Y_{\pp} = \emptyset$, then $\xE^{(k)} \underset{k \rightarrow \infty }{\longrightarrow} \xxinf \in \X$ and  $\xP^{(k)} \underset{k \rightarrow \infty }{\longrightarrow} \xPinf \in \Y_{\pp}$. Then, considering the sets 
$\Tcut$ and $\Ncut$ defined in~\eqref{eq:H0N0withoutbounds}
gives an inequality of Hoffman~\eqref{eq:hoffmanCondReverse}  violated by $\pp$, that is:
\begin{equation}\label{eq:cutViolatedH0N0asym}
\sum_{n\in\Ncut} E_n - \sum_{t\in \Tcut} p_t + \sum_{t \in \Tcut, n \notin \Ncut} \ubx\nt  - \sum_{t \notin \Tcut, n \in \Ncut} \lbx\nt  < 0 \ .
\end{equation}
Moreover, this Hoffman inequality can be written as a function of $\xEinf$,  as follows:
\begin{equation}  \label{eq:cutConfidWithoutN0}
A_{\Tcut}(\xEinf)   < \sum_{t\in \Tcut} p_t \text{\  with }  A_{\Tcut}(\xEinf)\eqDef \sum_{t\in \Tcut} \sum_{n\in\Nall} \x^\infty\nt \ .
\end{equation}
\end{enumerate}
\end{theorem}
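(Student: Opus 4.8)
The plan is to treat the two alternatives separately. Part~(i), as well as the convergence asserted in part~(ii), is just a restatement of \Cref{thm:cvgAP}, since $\X$ is compact and $\Y_{\pp}$ is a closed affine subspace. All the work is in showing that the sets $\Tcut,\Ncut$ of~\eqref{eq:H0N0withoutbounds} produce the violated Hoffman inequality~\eqref{eq:cutViolatedH0N0asym}, and that it coincides with~\eqref{eq:cutConfidWithoutN0}. First I would exploit the optimality structure of the limit pair: $(\xEinf,\xPinf)$ is the solution of~\eqref{pb:minSquare}, so its KKT system gives~\eqref{eq:stationarityKKT:ynt}, namely $\y^\infty\nt=\x^\infty\nt+\nu_t$ with $\nu_t=\frac{1}{N}(p_t-\sum_n\x^\infty\nt)$, together with the complementary-slackness (water-filling) description of $\xEinf=P_{\X}(\xPinf)$: eliminating $\y^\infty\nt$ from the stationarity condition yields $\omu\nt-\umu\nt=\nu_t+\lambda_n$, whence for every $(n,t)$ one has $\nu_t+\lambda_n>0\Rightarrow\x^\infty\nt=\ubx\nt$ and $\nu_t+\lambda_n<0\Rightarrow\x^\infty\nt=\lbx\nt$.

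Two global facts then follow at once. Summing $\sum_t\x^\infty\nt=E_n$ over $n$ and invoking the aggregate equality $\sum_tp_t=\sum_nE_n$ of~\eqref{eq:agg-cond-sum} (valid by \Cref{assp:PverifAggregCond}) gives $\sum_{t\in\Tall}\nu_t=0$; and for $t\in\Tcut$ the defining condition $\y^\infty\nt>\ubx\nt\ge\x^\infty\nt$ forces $\nu_t>0$. The main obstacle, and the crux of the whole argument, is the complementary sign statement $\nu_t\le0$ for every $t\notin\Tcut$. I would prove it by contradiction: if $\nu_{t_2}>0$ for some $t_2\notin\Tcut$, then by definition of $\Tcut$ every agent has $\y^\infty_{n,t_2}\le\ubx_{n,t_2}$, hence $\x^\infty_{n,t_2}=\y^\infty_{n,t_2}-\nu_{t_2}<\ubx_{n,t_2}$; the upper bound being inactive forces $\nu_{t_2}+\lambda_n\le0$, so $\lambda_n<0$ for all $n$. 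But $\sum_t\nu_t=0$ together with $\nu_{t_2}>0$ yields some $t^-$ with $\nu_{t^-}<0$, at which $\nu_{t^-}+\lambda_n<0$ for every $n$, so that $\x^\infty_{n,t^-}=\lbx_{n,t^-}$ for all $n$ and $p_{t^-}=\sum_n\lbx_{n,t^-}+N\nu_{t^-}<\sum_n\lbx_{n,t^-}$, contradicting the aggregate lower bound~\eqref{eq:agg-cond-bounds}. The same balance argument shows $\Tcut\neq\emptyset$: otherwise $\nu_t\le0$ for all $t$ and $\sum_t\nu_t=0$ force $\nu\equiv0$, i.e. $\xEinf=\xPinf\in\X\cap\Y_{\pp}$, which is excluded in case~(ii).

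With the signs $\nu_t>0$ on $\Tcut$ and $\nu_t\le0$ on $\Tcut^c$ established, I would conclude by a per-agent saturation argument. No agent can simultaneously carry a slack upper bound on $\Tcut$ (which forces $\lambda_n<0$) and a slack lower bound on $\Tcut^c$ (which forces $\lambda_n\ge0$); hence each agent $n$ either saturates all its upper bounds on $\Tcut$ or all its lower bounds on $\Tcut^c$. Combining this dichotomy with the trivial capacity inequalities $\sum_{t\in\Tcut}\x^\infty\nt\le\sum_{t\in\Tcut}\ubx\nt$ and $\sum_{t\notin\Tcut}\x^\infty\nt\ge\sum_{t\notin\Tcut}\lbx\nt$, and with the sign of $E_n-\sum_{t\notin\Tcut}\lbx\nt-\sum_{t\in\Tcut}\ubx\nt$ that defines $\Ncut$ in~\eqref{eq:H0N0withoutbounds}, pins down $\sum_{t\in\Tcut}\x^\infty\nt=\sum_{t\in\Tcut}\ubx\nt$ for $n\notin\Ncut$ and $\sum_{t\in\Tcut}\x^\infty\nt=E_n-\sum_{t\notin\Tcut}\lbx\nt$ for $n\in\Ncut$.

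Summing these identities over $n$ gives $A_{\Tcut}(\xEinf)=\sum_{n\in\Ncut}E_n-\sum_{t\notin\Tcut,\,n\in\Ncut}\lbx\nt+\sum_{t\in\Tcut,\,n\notin\Ncut}\ubx\nt$, which is exactly the left-hand side of the Hoffman inequality~\eqref{eq:cutViolatedH0N0asym} expressed through $\xEinf$. Finally, since $A_{\Tcut}(\xEinf)=\sum_{t\in\Tcut}(p_t-N\nu_t)$ and $\nu_t>0$ on the nonempty set $\Tcut$, we obtain $A_{\Tcut}(\xEinf)<\sum_{t\in\Tcut}p_t$, which is~\eqref{eq:cutConfidWithoutN0}; substituting the identity above turns this into~\eqref{eq:cutViolatedH0N0asym}, completing the proof. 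I expect the delicate point to be isolating the lemma $\nu_t\le0$ off $\Tcut$, as it is the step where the somewhat loose definition of $\Tcut$ (through $\y^\infty\nt>\ubx\nt$) must be reconciled with the closest-pair KKT structure, and it is the only place where the aggregate bounds~\eqref{eq:agg-cond} are genuinely used.
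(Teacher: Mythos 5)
Your proof is correct, and it reaches the paper's conclusion by a genuinely different route in the one place where the real work lies. The final summation step --- expressing $A_{\Tcut}(\xEinf)$ through the saturation identities and concluding from $\sum_{n} \x^\infty\nt = p_t - N\nu_t$ with $\nu_t>0$ on a nonempty $\Tcut$ --- coincides with the paper's computation in the proof of \Cref{th:cutN0T0violated}. What differs is how you obtain the structural facts feeding it, which the paper isolates as \Cref{prop:factsOncut} and proves in \Cref{app:proofFactsonCuts}. The paper first proves, for $n\notin\Ncut$ and $t\in\Tcut$, that $\x^\infty\nt=\ubx\nt$, via an intricate contradiction argument (auxiliary agents $m$, auxiliary times $t'$, $t''$, $t_0$, and the inequality chains \eqref{eq:ineqy0t0}--\eqref{eq:H0emptycontradiction2}), and only afterwards extracts the sign characterization $\Tcut=\{t \mid \nu_t>0\}$ by re-running that argument. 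You invert the order: you first prove the sign dichotomy ($\nu_t>0$ on $\Tcut$, $\nu_t\leq 0$ off $\Tcut$) by a short global argument --- if $\nu_{t_2}>0$ with $t_2\notin\Tcut$, every agent's upper bound at $t_2$ is slack, hence every $\lambda_n<0$; then at any $t^-$ with $\nu_{t^-}<0$ (which exists since $\sum_t\nu_t=0$) all lower bounds saturate, contradicting the aggregate bound \eqref{eq:agg-cond-bounds} --- and you then derive the per-entry saturation facts from the clean incompatibility ``slack upper bound on $\Tcut$ forces $\lambda_n<0$, slack lower bound on $\Tcut^c$ forces $\lambda_n\geq 0$'', letting the defining inequality of $\Ncut$ in \eqref{eq:H0N0withoutbounds} do the combinatorial work. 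I checked each implication against the KKT system of \eqref{pb:minSquare}; all are sound, and your argument recovers exactly the parts of \Cref{prop:factsOncut} the theorem needs, namely (i), (ii), (iv) and $\Tcut\neq\emptyset$ (the claims $\lambda_n<0$ for $n\in\Ncut$ and nonemptiness of $\Ncut$, $\Ncut^c$ are not required for the cut's validity). Your route is shorter and more transparent than the appendix argument; what the paper's longer template buys is that it is reused almost verbatim for the finite-iterate version (\Cref{lemm:factsCutFinite}, underlying \Cref{prop:exactnessAT0K}), where only $\epsilon$-accurate iterates are available --- adapting your global sign argument to that perturbed setting would require a separate analysis.
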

Before giving the proof of \Cref{th:cutN0T0violated}, we need to show some technical properties on the sets $\Tcut$, $ \Ncut$. For simplicity of notation,
 we use $\xx$ and $\yy$ to denote $\xxinf$ and $\yyinf$ in \Cref{prop:factsOncut} and the proof of \Cref{th:cutN0T0violated}.
\begin{proposition}  \label{prop:factsOncut} With $\xx \neq  \yy$ solutions of problem \eqref{pb:minSquare} (outputs of APM on $\X$ and $\Y_{\pp}$ with $\epscvg = 0$),
\begin{enumerate}[itemindent=0.7cm,label=(\roman*)]
\item \label{claimFactcut1} $\forall t \in \Tcut, \forall n \notin \Ncut, \ {x}\nt = \ubx\nt$ and $ {y}\nt > \ubx\nt$; 
\item \label{claimFactcut2} $\Tcut = \{t \ | \ \nu_t >0 \} = \{t \ | \ p_t > \sum_{n} x\nt \} $, where $\nu_t$ is the optimal Lagrangian multiplier associated to \eqref{eq:minSquare:consp};
\item  \label{claimFactcut4} $\forall n \in \Ncut, \lambda_n  < 0$ ;
\item \label{claimFactcut3}$\forall t \notin \Tcut, \forall n \in \Ncut, \ {x}\nt =\lbx\nt$ \ ;
\item \label{claimFactcutNonemptysets} the sets $\Tcut, \Tcut^c$, $\Ncut$ and $\Ncut^c$ are nonempty.
\end{enumerate}
\end{proposition}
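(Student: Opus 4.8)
The plan is to read everything off the Karush--Kuhn--Tucker (KKT) system of the convex program \eqref{pb:minSquare}. Besides \eqref{eq:stationarityKKT:ynt}, i.e. $\y\nt=\x\nt+\nu_t$, stationarity in $\x\nt$ reads $\omu\nt-\umu\nt=\nu_t+\lambda_n$, so by complementary slackness one obtains the dichotomy $\nu_t+\lambda_n>0\Rightarrow\x\nt=\ubx\nt$ and $\nu_t+\lambda_n<0\Rightarrow\x\nt=\lbx\nt$. First I would record two global identities. Summing $\y\nt=\x\nt+\nu_t$ over $n$ and using \eqref{eq:minSquare:consp} gives $N\nu_t=p_t-\sum_n\x\nt$; summing this over $t$ and using the aggregate balance \eqref{eq:agg-cond-sum} together with $\sum_t\x\nt=E_n$ gives $\sum_t\nu_t=0$. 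These identities, together with the aggregate bounds \eqref{eq:agg-cond-bounds} (valid since $\pp\in\P$), are what let the local KKT conditions interact globally.

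With this in hand, I would prove the facts in the order (ii), (iii), (iv), (i). For (ii), the identity $N\nu_t=p_t-\sum_n\x\nt$ gives $\{t:\nu_t>0\}=\{t:p_t>\sum_n\x\nt\}$ at once, and $\Tcut\subseteq\{t:\nu_t>0\}$ is clear since $\y\nt>\ubx\nt\geq\x\nt$ forces $\nu_t>0$; the reverse inclusion is the crux, treated below. For (iii), if some $n\in\Ncut$ had $\lambda_n\geq0$, then every $t\in\Tcut$ would satisfy $\nu_t+\lambda_n>0$, hence $\x\nt=\ubx\nt$, and combining $\x\nt\geq\lbx\nt$ off $\Tcut$ with $\sum_t\x\nt=E_n$ would contradict the strict defining inequality of $\Ncut$. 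Fact (iv) is then immediate: for $t\notin\Tcut$ we have $\nu_t\leq0$ by (ii) and $\lambda_n<0$ by (iii), so $\nu_t+\lambda_n<0$ and $\x\nt=\lbx\nt$. For (i), fix $t\in\Tcut$ and $n\notin\Ncut$: if $\lambda_n\geq0$ the dichotomy gives $\x\nt=\ubx\nt$ directly; if $\lambda_n<0$ then, since $\nu\leq0$ off $\Tcut$, the dichotomy puts every coordinate of agent $n$ outside $\Tcut$ at its lower bound, and feeding $\sum_t\x\nt=E_n$ into the non-membership inequality $E_n\geq\sum_{t\notin\Tcut}\lbx\nt+\sum_{t\in\Tcut}\ubx\nt$ forces $\x\nt=\ubx\nt$ on all of $\Tcut$. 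In either case $\y\nt=\ubx\nt+\nu_t>\ubx\nt$.

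For (v), each emptiness would contradict an aggregate bound. Since $\xx\neq\yy$ we have $\nnu\neq0$, so $\sum_t\nu_t=0$ yields both a $t$ with $\nu_t>0$ and a $t$ with $\nu_t<0$; by (ii) this gives $\Tcut\neq\emptyset$ and $\Tcut^c\neq\emptyset$. If $\Ncut=\Nall$, then $\lambda_n<0$ for all $n$ by (iii), so at any $t'\notin\Tcut$ with $\nu_{t'}<0$ the dichotomy places every agent at $\lbx_{n,t'}$, whence $p_{t'}=\sum_n\lbx_{n,t'}+N\nu_{t'}<\sum_n\lbx_{n,t'}$, contradicting the lower bound in \eqref{eq:agg-cond-bounds}; thus $\Ncut^c\neq\emptyset$. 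Symmetrically, if $\Ncut=\emptyset$ then (i) gives $\x\nt=\ubx\nt$ for all $n$ at any $t\in\Tcut$, so $p_t=\sum_n\ubx\nt+N\nu_t>\sum_n\ubx\nt$, contradicting the upper bound; thus $\Ncut\neq\emptyset$.

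The main obstacle is the reverse inclusion $\{t:\nu_t>0\}\subseteq\Tcut$ in (ii): it is not local, since $\nu_t>0$ only says $p_t>\sum_n\x\nt$, and because the aggregate bound $p_t\leq\sum_n\ubx\nt$ holds it cannot be closed by a pigeonhole on $\sum_n\y\nt=p_t$ alone. I would argue by contradiction: if $\nu_t>0$ but $\y\nt\leq\ubx\nt$ for all $n$, then $\x\nt=\y\nt-\nu_t<\ubx\nt$ for all $n$, so $\omu\nt=0$ and $\omu\nt-\umu\nt=\nu_t+\lambda_n$ forces $\lambda_n\leq-\nu_t<0$ for every $n$; picking $t'$ with $\nu_{t'}<0$ (which exists because $\sum_t\nu_t=0$) then places every agent at its lower bound at $t'$, giving $p_{t'}<\sum_n\lbx_{n,t'}$ and again contradicting \eqref{eq:agg-cond-bounds}. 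This same interplay between the two global identities and the aggregate bounds is what drives the whole proposition.
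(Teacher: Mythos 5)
Your proof is correct, and it takes a genuinely different route from the paper's, even though both rest on the same KKT foundation for \eqref{pb:minSquare}: the stationarity identity $\omu_{n,t}-\umu_{n,t}=\nu_t+\lambda_n$, the projection identities $N\nu_t=p_t-\sum_n x_{n,t}$ and $\sum_t\nu_t=0$, and the aggregate bounds \eqref{eq:agg-cond-bounds}. The difference is the direction of the logical flow. The paper proves item (i) first, by a lengthy contradiction argument that introduces the index sets $\Tin_n,\uH_n,\oH_n$, an auxiliary agent $m$, and an extremal time $t_0\in\arg\min_{t\notin\oH_n}\{\ubx_{n,t}-y_{n,t}\}$, and then obtains the reverse inclusion $\{t:\nu_t>0\}\subset\Tcut$ of item (ii) by re-running that entire machinery with $\Tcut'=\{t:\nu_t>0\}$ in place of $\Tcut$; item (iii) is handled by yet another extremal-index argument. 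You instead isolate the reverse inclusion of (ii) as the single crux and dispose of it directly: if $\nu_t>0$ but no $y_{n,t}$ exceeds its upper bound, then every $\lambda_n<0$, so at any $t'$ with $\nu_{t'}<0$ (which exists since $\sum_t\nu_t=0$) every agent sits at its lower bound, contradicting $p_{t'}\geq\sum_n\lbx_{n,t'}$ from \Cref{assp:PverifAggregCond}. Once (ii) is in hand, your (iii), (iv) and (i) each follow in a few lines from the sign dichotomy combined with $\sum_t x_{n,t}=E_n$ and the defining inequality of $\Ncut$ (in particular, your proof of (iii) shortcuts the paper's $t_0$ argument entirely by noting $x_{n,t}\geq\lbx_{n,t}$ off $\Tcut$), and your (v) is essentially the paper's. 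What your route buys is brevity and transparency, eliminating the auxiliary-agent and arg-min constructions; what the paper's route buys is that (i) is established without appeal to (ii). Since your proof of the reverse inclusion uses only the dichotomy, the two global identities, and the aggregate bounds, and none of the other items, your reordering introduces no circularity.
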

The proof of \Cref{prop:factsOncut} is technical and given in \Cref{app:proofFactsonCuts}.
With \Cref{prop:factsOncut}, we are now ready to prove  \Cref{th:cutN0T0violated}. 
\begin{proof}[Proof of \Cref{th:cutN0T0violated}]
We have:
\begin{align*}
& \sum_{n \in \Ncut} E_n + \sum_{t \in \Tcut, n \notin \Ncut} \ubx\nt- \sum_{t \notin \Tcut, n \in \Ncut} \lbx\nt  -\sum_{t \in \Tcut} p_t \\
& =  \sum_{n \in \Ncut} \sum_{t} x\nt + \sum_{t \in \Tcut, n \notin \Ncut} \ubx\nt -\sum_{t \notin \Tcut, n \in \Ncut} \lbx\nt - \sum_{t \in \Tcut} p_t  \qquad  (\text{from } \eqref{eq:minSquare:consE}  ) \\ 
& = \sum_{n \in \Ncut}  \Big( \sum_{t\notin \Tcut} \lbx\nt + \sum_{t\in \Tcut} x\nt \Big) + \hh\hh \sum_{t \in \Tcut, n \notin \Ncut}\hh\hh x\nt- \hh\hh \sum_{t \notin \Tcut, n \in \Ncut} \hh\hh \lbx\nt - \sum_{t \in \Tcut} p_t 	\; \text{ (from  Prop.~\ref{prop:factsOncut} \ref{claimFactcut1} and \ref{claimFactcut3})} \\
&=  \sum_{t \in \Tcut} \Big( \sum_{n \in \Ncut} x\nt + \sum_{n\notin\Ncut} x\nt \Big) - \sum_{t \in \Tcut} p_t  = \sum_{t \in \Tcut} \Big(  \sum_{n\in\Nall} x\nt - \sum_{n\in\Nall} y\nt \Big) \\ 
& =\sum_{t \in \Tcut} \Big(  \sum_{n\in\Nall} -\nu_t \Big) =\sum_{t \in \Tcut} \Big(  - \sum_{n\in\Nall} \left| x\nt - y\nt \right|\Big)  
\end{align*}
using the stationarity conditions \eqref{eq:stationarityKKT:ynt} and for all $ t\in\Tcut, \nu_t >0$ by Prop.\ref{prop:factsOncut} \ref{claimFactcut2}. 
Moreover, using:
\begin{equation} \label{eq:sumzero}
\sum_{t \in \Tall} \sum_{n\in\Nall} ( x\nt - y\nt) = \sum_{n\in\Nall} E_n - \sum_{t\in \Tall} p_t =0 \ , \vspace{-0.2cm}
\end{equation}
obtained from \Cref{assp:PverifAggregCond}, we see that:
 \begin{equation} \label{eq:sumNorm1}
\sum_{t \in \Tcut} \Big(  - \sum_{n\in\Nall} \left| x\nt - y\nt \right|\Big)   = -({\norm{\xx-\yy}_1} )/ {2} < 0 \ , 
\end{equation}
which shows \eqref{eq:cutViolatedH0N0asym}.
We now show  that  inequality \eqref{eq:cutConfidWithoutN0} is obtained by a rewriting of  \eqref{eq:cutViolatedH0N0asym}, indeed:
\begin{align*}
 & \sum_{n\in\Ncut} E_n +  \sum_{t \in \Tcut, n \notin \Ncut} \ubx\nt  - \sum_{t \notin \Tcut, n \in \Ncut} \lbx\nt  \\
&=  \sum_{n\in\Ncut} \sum_{t\in \Tall} x\nt + \sum_{t \in \Tcut, n \notin \Ncut} x\nt  - \sum_{t \notin \Tcut, n \in \Ncut} x\nt  \qquad  \text{ (from  Prop.\ref{prop:factsOncut} \ref{claimFactcut1} and \ref{claimFactcut3})} \\
&=  \sum_{t\in \Tcut, n\in\Ncut} x\nt + \sum_{t \in \Tcut, n \notin \Ncut} x\nt   = \sum_{t\in \Tcut} \sum_{n\in\Nall} x\nt \ =A_{\Tcut}(\xE) .
\end{align*}
\end{proof}

\begin{remark}
  An alternative to APM is to use Dykstra's projections algorithm \cite{dykstra1983algorithm}.
  It is shown in \cite{bauschke1994dykstra} that the outputs of Dykstra's algorithm also satisfy the conditions given in \Cref{thm:cvgAP}, thus \Cref{th:cutN0T0violated} will also hold for this algorithm.
  However, in this paper we focus on APM as it is simpler and, to our knowledge, there is no guarantee that Dykstra's algorithm would be faster in this framework.
\end{remark}

Suppose, as before, that the two sequences generated by APM on $\X$ and $\Y$ converge to two distinct points $\xxinf$ and $\yyinf$.
 Then, at each round $k$%
, we can define from \eqref{eq:stationarityKKT:ynt} and considering any $n\in\Nall$, the multiplier $\nnu\kexp=\yy_n\kexp-\xx_n\kexp $ tends to $ \yy_n^\infty-\xx^\infty_n \eqd \nnu^\infty $.
 The set $\Tcut$ of \Cref{th:cutN0T0violated} is:
 \begin{equation}
 \Tcut^\infty \eqd \{t \in\Tall \  | \  0 <  \nu_t^\infty \}, 
  \end{equation}
which raises an issue for practical computation, as $\nnu^\infty$ is only obtained \emph{ultimately} by APM, possibly in infinite time.
  To have access to $\Tcut^\infty$ \emph{in finite time}, that is, from one of the iterates $(\nnu^{(k)})_k$,  we consider the set: %
\begin{equation}
\Tcut^{(K)} \eqd \{t \in \Tall \ | \ \Bseuil \epscvg  < \nu_t^{(K)} \},
\end{equation}
where $\epscvg$ is the tolerance for convergence of APM as defined in \Cref{algo:vonNeumannProj}, $\Bseuil>0$ is a constant, and $K$ (depending on $\epscvg$) is the first integer such that $\norm{\xx^{(K)} - \xx^{(K-1)}} <\epscvg$. 

We next show that we can choose $\Bseuil$ to ensure that $\Tcut^{(K)}= \Tcut^{\infty}$  for $\epscvg$ small enough. 
We rely on the geometric convergence rate of APM on polyhedra  \cite{borwein2014analysis,nishihara2014convergence}:
\begin{proposition}[{\cite{nishihara2014convergence}}]
\label{prop:geometricCvgGeneral}
If $\X$ and $\Y$ are polyhedra, there exists %
$\rho \in (0,1)$ such that the sequences $(\xx\kexp)_k$ and $(\yy\kexp)_k $ generated by APM verify for all $ k\geq 1$:
\begin{align*}
&\norm{\xE^{(k+1)} - \xE^{(k)} }_2  \leq \rho \norm{\xE^{(k)} - \xE^{(k-1)} }_2  \quad \text{ and } \quad  \norm{\yy^{(k+1)} - \yy^{(k)} }_2  \leq \rho \norm{\yy^{(k)} - \yy^{(k-1)} }_2  \ .
\end{align*}
\end{proposition}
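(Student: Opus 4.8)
The plan is to exploit the fact that, although the metric projections $P_{\X}$ and $P_{\Y}$ are not affine, a polyhedron has only \emph{finitely many faces}, so that alternating projections eventually reduce to alternating projections onto two \emph{fixed} affine subspaces, for which the classical Friedrichs-angle estimate yields a geometric rate. Concretely, I would first record the structure of the projection onto a polyhedron $P$: the ambient space $\rr^{NT}$ splits into finitely many polyhedral cells, one per face $F$ of $P$, such that on the cell attached to $F$ the map $P_P$ coincides with the orthogonal projection $P_{\aff F}$ onto the affine hull of $F$, an affine map. In our setting $\Y=\Y_{\pp}$ is itself an affine subspace, so $P_{\Y}$ is a single global affine map and only $\X$ contributes a genuinely piecewise behaviour; this simplifies the bookkeeping but the argument is unchanged in general.

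Second, I would use the convergence already guaranteed by \Cref{thm:cvgAP}: $\xE^{(k)}\to\xEinf\in\X$ and $\xP^{(k)}\to\xPinf\in\Y$. Let $F^\star$ and $G^\star$ be the faces of $\X$ and $\Y$ in whose relative interiors $\xEinf$ and $\xPinf$ lie, and let $M,N$ be the direction subspaces of $\aff F^\star,\aff G^\star$. For $k$ large the iterates sit in the cells attached to $F^\star$ and $G^\star$, so the recursion freezes to $\xE^{(k+1)}=P_{\aff F^\star}P_{\aff G^\star}\xE^{(k)}$, whose action on the successive differences $d^{(k)}:=\xE^{(k)}-\xE^{(k-1)}\in M$ is the linear map $P_{M}P_{N}$. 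Since $P_{M\cap N}P_M=P_{M\cap N}=P_{M\cap N}P_N$, the quantity $P_{M\cap N}d^{(k)}$ is invariant under the iteration, hence equal to its limit $0$; thus $d^{(k)}\in M\cap(M\cap N)^\perp$, and the Friedrichs-angle bound gives $\norm{P_MP_N d^{(k)}}_2\le c\,\norm{d^{(k)}}_2$ with $c=\cos\theta_F(M,N)$. The same computation applied after translating by the gap vector $\xPinf-\xEinf$ covers the case $\X\cap\Y=\emptyset$, where APM ends in a $2$-cycle.

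Third, to turn this into a single $\rho\in(0,1)$ I would set $\rho:=\max_{F,G}\cos\theta_F(L_F,L_G)$, the maximum of the Friedrichs-angle cosines over the finitely many pairs of faces $F$ of $\X$ and $G$ of $\Y$ (with $L_F,L_G$ their direction spaces); each cosine is strictly below $1$ because in finite dimension the sum of two subspaces is closed, so the finite maximum satisfies $\rho<1$. Moreover, nonexpansiveness of the two projections already forces the successive differences to be non-increasing, $\norm{\xE^{(k+1)}-\xE^{(k)}}_2\le\norm{\xE^{(k)}-\xE^{(k-1)}}_2$ for every $k\ge1$, and symmetrically for $(\yy^{(k)})_k$, so the whole content of the proposition is to upgrade this trivial factor $1$ to $\rho$ uniformly in $k$.

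The hard part is exactly this uniformity across the finitely many steps at which the \emph{active pair of faces changes}: the clean per-step estimate relied both on $\xP^{(k)},\xP^{(k-1)}$ lying in the same cell (so that the $\X$-update linearizes to $P_M$) and on the invariance of the $M\cap N$-component, which holds only while $(F^\star,G^\star)$ is frozen; at a transition step $d^{(k)}$ need not be orthogonal to the intersection of the new direction spaces. I would resolve this by arguing that the active face-pair is eventually constant --- using that there are finitely many faces and that the limit point lies in the relative interior of its face and, generically, in the interior of the corresponding normal-cone cell --- so that only finitely many transition steps occur, which are then absorbed using the non-increase of the differences. A more robust route that avoids face tracking altogether is to invoke bounded linear regularity of the finite family $\{\X,\Y\}$, a consequence of Hoffman's error bound for systems of linear inequalities, together with the standard equivalence between bounded linear regularity and linear convergence of APM; this yields a uniform rate directly, at the cost of a less explicit constant than the Friedrichs-angle description.
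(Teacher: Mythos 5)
Your overall strategy---reduce to Friedrichs angles between the direction spaces of the affine hulls of faces, and take the maximum cosine over the finitely many face pairs---is exactly the route of the cited source \cite{nishihara2014convergence}; the paper itself gives no proof of \Cref{prop:geometricCvgGeneral} (it is quoted), and the same machinery is imported later as \Cref{lem:nishiharaFriedrich}. Your ``frozen-face'' computation is also correct as far as it goes: while consecutive iterates project into the cells of a fixed pair of faces, the successive differences evolve by $P_M P_N$, their $M\cap N$-component is invariant and hence zero, and the Friedrichs bound applies.

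The gap is the one you name but do not close, and it is fatal to the statement as claimed. First, eventual constancy of the active face pair is not proved: $\xE^{(k)}\to\xEinf\in\operatorname{ri}(F^\star)$ does not force $\xP^{(k)}$ to enter the normal-cone cell of $F^\star$ (nearby points may keep projecting onto any of the faces whose closure contains $\xEinf$), and ``generically'' is not admissible since the proposition must hold for every instance, including degenerate data. Second, even granting stabilization after some unknown $K_0$, your argument gives the contraction only for $k\geq K_0$, with factor $1$ at the transition steps; the proposition asserts the inequality for \emph{every} $k\geq 1$, and this uniformity is precisely what the paper uses: the lemma following \Cref{prop:geometricCvgGeneral}, and hence \Cref{prop:convTinf} and \Cref{prop:exactnessAT0K}, sum the geometric series starting at the stopping index $K$ of \Cref{algo:vonNeumannProj}, which can be any integer, so an eventual contraction with unknown onset cannot be threaded through those proofs. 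Third, the fallback via bounded linear regularity does not repair this: that equivalence presupposes $\X\cap\Y\neq\emptyset$ and controls $d(\xE^{(k)},\X\cap\Y)$ rather than successive differences, whereas \Cref{prop:geometricCvgGeneral} is invoked in this paper precisely when $\X\cap\Y_{\pp}=\emptyset$, i.e.\ when APM ends in a $2$-cycle and a cut must be extracted. What is missing is an argument valid at a step where the active faces change---for instance showing that at every step either the $\Y$-projection or the $\X$-projection contracts the difference by the worst-case face angle---and that is the actual content of the cited result.
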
  
\Cref{prop:geometricCvgGeneral} applies to any polyhedra $\X$ and $\Y$. In \Cref{subsec:complexityAnalysis} we shall give   %
  an explicit upper bound on the constant $\rho$ in the specific transportation case given by \eqref{cons:disagfeas} and  \eqref{eq:XnTransport}.

From the previous proposition, we can quantify the distance to the limits in terms of $\rho$:
\begin{lemma}
Consider an integer $K$ such that the sequence $(\xx\kexp)_{k \geq 0}$ generated by APM satisfies $\norm{\xx^{(K)}-\xx^{(K-1)}}\leq \epscvg$,  %
 then we have for any $K'\geq K-1$:
\begin{equation*}
 \norm{\xEinf - \xE^{(K')}} \leq \frac{\epscvg}{1-\rho} \ .
\end{equation*}
\end{lemma}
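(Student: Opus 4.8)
The plan is to express the error $\xEinf - \xE^{(K')}$ as a telescoping sum of consecutive increments and then control it by the geometric decay furnished by \Cref{prop:geometricCvgGeneral}.

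First I would use the convergence $\xE^{(k)} \to \xEinf$ guaranteed by \Cref{thm:cvgAP} to write, for any index $K'$,
\begin{equation*}
\xEinf - \xE^{(K')} = \sum_{j=K'}^{\infty} \big( \xE^{(j+1)} - \xE^{(j)} \big) ,
\end{equation*}
the series being absolutely convergent precisely because \Cref{prop:geometricCvgGeneral} makes the increments decay geometrically. Applying the triangle inequality then gives $\norm{\xEinf - \xE^{(K')}} \leq \sum_{j=K'}^{\infty} \norm{\xE^{(j+1)} - \xE^{(j)}}$.

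Next I would iterate the contraction estimate of \Cref{prop:geometricCvgGeneral}, namely $\norm{\xE^{(k+1)}-\xE^{(k)}}_2 \leq \rho\,\norm{\xE^{(k)}-\xE^{(k-1)}}_2$, to obtain $\norm{\xE^{(j+1)}-\xE^{(j)}} \leq \rho^{\,j+1-K}\,\norm{\xE^{(K)}-\xE^{(K-1)}}$ for every $j \geq K-1$. Since the hypothesis gives $\norm{\xE^{(K)}-\xE^{(K-1)}} \leq \epscvg$, and since $K' \geq K-1$ ensures that every summation index satisfies $j \geq K-1$, the right-hand side is bounded by the geometric series
\begin{equation*}
\epscvg \sum_{j=K'}^{\infty} \rho^{\,j+1-K} = \epscvg\,\frac{\rho^{\,K'+1-K}}{1-\rho} .
\end{equation*}

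Finally, because $\rho \in (0,1)$ and the condition $K' \geq K-1$ makes the exponent $K'+1-K$ nonnegative, we have $\rho^{\,K'+1-K} \leq 1$, which yields the announced bound $\norm{\xEinf - \xE^{(K')}} \leq \epscvg/(1-\rho)$. I do not anticipate a genuine difficulty here: the only points requiring care are justifying the telescoping identity (absolute convergence of the increments, which \Cref{prop:geometricCvgGeneral} supplies directly) and keeping the index bookkeeping consistent so that each term of the sum really satisfies $j \geq K-1$ — which is exactly what the hypothesis $K' \geq K-1$ secures.
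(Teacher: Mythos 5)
Your proof is correct and follows essentially the same route as the paper's: telescope the difference into consecutive increments, bound each by the geometric contraction of \Cref{prop:geometricCvgGeneral}, and sum the resulting geometric series. The only (cosmetic) difference is that you sum the infinite series directly rather than bounding finite partial sums and passing to the limit, and your index bookkeeping (tracking the exponent $j+1-K$ back to the hypothesis at index $K$) is in fact more explicit than the paper's.
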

\begin{proof} From \Cref{prop:geometricCvgGeneral}, we have for any $k \geq K$: 
\begin{align*}
 & \norm{\xE^{(k)} - \xE^{(K')} } \leq \sum_{s=0}^{k-K'} \norm{\xE^{(K+s+1)} - \xE^{(K+s)} } %
 \leq    \sum_{s=0}^{k-K'} \rho^{s} \norm{\xE^{(K'+1)} - \xE^{(K')} } \leq \frac{1}{1-\rho} \epscvg \ ,
\end{align*}
so that, by taking the limit $k \rightarrow \infty$, one obtains $ \norm{\xEinf - \xE^{(K')}} \leq \tfrac{\epscvg}{1-\rho}$.
\end{proof}
With this previous lemma, we can state the condition on $\Bseuil$ ensuring the desired property:
\begin{proposition} \label{prop:convTinf}
Define $\lnu \eqDef \min \{ |\nu^\infty_t|>0\}$ (least nonzero element of $\nnu^\infty$). If the constants $\Bseuil$ and $\epscvg>0$ are chosen such that %
$ \Bseuil > \frac{1}{1-\rho} \text{  and } \  \epscvg \times  2 \Bseuil  < \lnu , $
and \Cref{algo:vonNeumannProj} stops at iteration $K$, then we have:
 $$\Tcut^{(K)}= \Tcut^\infty\ . $$
\end{proposition}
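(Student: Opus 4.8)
The plan is to control, for each time index $t$, the gap $|\nu_t^{(K)}-\nu_t^\infty|$ in terms of $\epscvg$, and then to run a two-case analysis according to whether $\nu_t^\infty>0$, i.e.\ whether $t\in\Tcut^\infty$. Both inclusions $\Tcut^\infty\subseteq\Tcut^{(K)}$ and $\Tcut^{(K)}\subseteq\Tcut^\infty$ will follow at once, provided the threshold $\Bseuil\epscvg$ is squeezed strictly between the values $\nu_t^{(K)}$ can take in each regime; this is precisely what the two hypotheses $\Bseuil>\tfrac{1}{1-\rho}$ and $2\Bseuil\epscvg<\lnu$ are designed to guarantee.

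First I would make explicit the dependence of the multiplier iterate $\nnu^{(k)}$ on the primal iterate $\xx^{(k)}$. Since $\Y_{\pp}$ is an affine subspace that decouples over the index $t$, the Euclidean projection $\yy^{(k)}=P_{\Y}(\xx^{(k)})$ is given coordinatewise by $y_{n,t}^{(k)}=x_{n,t}^{(k)}+\tfrac{1}{N}\big(p_t-\sum_{m}x_{m,t}^{(k)}\big)$, so that $\nu_t^{(k)}=y_{n,t}^{(k)}-x_{n,t}^{(k)}=\tfrac{1}{N}\big(p_t-\sum_m x_{m,t}^{(k)}\big)$ independently of $n$, in agreement with the limiting stationarity relation \eqref{eq:stationarityKKT:ynt}. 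This exhibits each $\nu_t$ as an affine function of $\xx$, and by Cauchy--Schwarz $|\nu_t^{(K)}-\nu_t^\infty|\le\tfrac{1}{\sqrt N}\norm{\xx^{(K)}-\xxinf}_2\le\norm{\xx^{(K)}-\xxinf}_2$. Applying the preceding lemma with $K'=K$ (which rests on the geometric rate of \Cref{prop:geometricCvgGeneral}) then yields the key estimate $|\nu_t^{(K)}-\nu_t^\infty|\le\tfrac{\epscvg}{1-\rho}$ for every $t$.

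With this estimate I would split into two cases. If $t\in\Tcut^\infty$, then $\nu_t^\infty>0$, hence $\nu_t^\infty\ge\lnu$ by definition of $\lnu$; combining with the estimate gives $\nu_t^{(K)}\ge\lnu-\tfrac{\epscvg}{1-\rho}$, and the hypotheses $\lnu>2\Bseuil\epscvg$ together with $\tfrac{1}{1-\rho}<\Bseuil$ force $\lnu-\tfrac{\epscvg}{1-\rho}>2\Bseuil\epscvg-\Bseuil\epscvg=\Bseuil\epscvg$, so $t\in\Tcut^{(K)}$. Conversely, if $t\notin\Tcut^\infty$, then $\nu_t^\infty\le0$, whence $\nu_t^{(K)}\le\tfrac{\epscvg}{1-\rho}<\Bseuil\epscvg$ (again by $\Bseuil>\tfrac{1}{1-\rho}$), so $t\notin\Tcut^{(K)}$. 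The two cases give the double inclusion, i.e.\ $\Tcut^{(K)}=\Tcut^\infty$.

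The case analysis and the explicit projection formula are routine; the one point requiring care---and the main, modest, obstacle---is converting the APM convergence guarantee, which is phrased on the primal iterates $\xx^{(k)}$, into a quantitative bound on the multiplier iterates $\nu_t^{(k)}$. The linearity of the map $\xx\mapsto\nnu$ combined with the preceding lemma achieves exactly this, and one must finally verify that the prescribed constants separate the two regimes \emph{strictly}, which is encoded by $\Bseuil>\tfrac{1}{1-\rho}$ and $2\Bseuil\epscvg<\lnu$.
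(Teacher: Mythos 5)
Your proof is correct and takes essentially the same approach as the paper's: express $\nu_t^{(K)}$ through the explicit coordinatewise projection formula, bound $|\nu_t^{(K)}-\nu_t^\infty|$ by $\epscvg/(1-\rho)$ via the preceding lemma, and then separate the two regimes using the hypotheses $\Bseuil>\tfrac{1}{1-\rho}$ and $2\Bseuil\epscvg<\lnu$. The only cosmetic differences are that you prove the inclusion $\Tcut^{(K)}\subseteq\Tcut^\infty$ by contraposition and make the norm comparison (Cauchy--Schwarz) explicit, where the paper asserts the corresponding bound directly.
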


\begin{proof}
Let  $t \in \Tcut^\infty$, that is $ \nu_t^\infty >0 $ which is equivalent to $ \nu_t^\infty \geq  \lnu$ by definition of $\lnu$. We have:
\begin{align*}
\nu_t^{(K)} = \frac{1}{N}(p_t - \sum_n x\nt^{(K)})
& = \frac{1}{N}(p_t - \sum_n x^\infty\nt) + \frac{1}{N}(\sum_n x^\infty\nt - \sum_n x\nt^{(K)}) \\
& >  \nu_t^\infty - \tfrac{\epscvg}{1-\rho} \geq  \lnu - \tfrac{\epscvg}{1-\rho} > \epscvg (2 \Bseuil  - \tfrac{1}{1-\rho})\ ,
\end{align*}
and this last quantity is greater than $\Bseuil \epscvg$ as soon as
 $\Bseuil\geq \tfrac{1}{1-\rho}$, thus $t \in \Tcut^{(K)}$. 

Conversely, if $t \in \Tcut^{(K)}$, then:
\begin{align*}
 \nu_t^\infty  &=\frac{1}{N}(p_t - \sum_n x^\infty\nt)  =\frac{1}{N}(p_t - \sum_n x\nt^{(K)})    - \frac{1}{N}(\sum_n x^\infty\nt - \sum_n x\nt^{(K)})  \\
& \geq \nu_t^{(K)} - \tfrac{\Bseuil}{1-\rho } \ > \ \nu_t^{(K)} - \Bseuil \epscvg \geq (\Bseuil-\Bseuil) \epscvg \geq 0 \ ,
\end{align*}
so that $t \in  \Tcut^\infty$.
Furthermore, the ``approximated'' cut $\sum_{t\in \Tcut} \left( \sum_{n\in\Nall} \x^{(K)}\nt  - p_t \right) \geq 0 $ is violated by the current value of $\pp$  (or $\pp^{(s)}$ at iteration $s$) in the algorithm as:
\begin{align*}
\sum_{t\in \Tcut} \left( \sum_{n\in\Nall} \x^{(K)}\nt  - p_t \right)
\leq & \sum_{t\in \Tcut} \left( \sum_{n\in\Nall} \x^{(K)}\nt - x^\infty\nt \right) + \sum_{t\in \Tcut} \left( \sum_{n\in\Nall}  \x^\infty\nt- p_t \right) \\
& \leq \norm{\xE^{(K)}-\xEinf}_1 -\frac{1}{2}\norm{\xEinf-\xPinf}_1
\end{align*}
using \eqref{eq:sumzero} and \eqref{eq:sumNorm1}. This last quantity is negative as soon as $\norm{\xE^{(K)}-\xEinf}_1 < \frac{1}{2}\norm{\xEinf-\xPinf}_1$, which holds in particular if $\Bseuil \epscvg < \frac{1}{2}\norm{\xEinf-\xPinf}_1$.
\end{proof}

This second proposition shows a surprising result:  even if we do not have access to the limit $\xxinf$, we can compute \emph{in finite time} the \emph{exact} left hand side term $\Arhs_{\Tcut}(\xxinf)$ of the cut  \eqref{eq:cutConfidWithoutN0}:
\begin{proposition} \label{prop:exactnessAT0K}
Under the hypotheses of \Cref{prop:convTinf}, we have: 
\begin{equation*}
\Arhs_{\Tcut}(\xx\Kexp) = \sum_{t\in\Tcut} \sum_{n\in\Nall} \x\nt\Kexp = \Arhs_{\Tcut}(\xxinf) \ .
\end{equation*}
\end{proposition}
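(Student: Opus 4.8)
The plan is to reduce the claimed \emph{exact} equality to a finite active-set identification statement, and then to replay, with the iterate $\xx\Kexp$ in place of $\xxinf$, the purely algebraic computation already carried out in the proof of \Cref{th:cutN0T0violated}. First I would record what comes for free. By \Cref{prop:convTinf} the hypotheses give $\Tcut^{(K)}=\Tcut^\infty$, and since the set $\Ncut$ in \eqref{eq:H0N0withoutbounds} depends only on $\Tcut$ and on the data $(E_n,\llbx,\uubx)$, the $\Ncut$ attached to $\Tcut^{(K)}$ coincides with the one attached to $\Tcut^\infty$. Hence the combinatorial quantity
$$ g \eqDef \sum_{n\in\Ncut} E_n + \sum_{t\in\Tcut,\, n\notin\Ncut} \ubx\nt - \sum_{t\notin\Tcut,\, n\in\Ncut} \lbx\nt $$
is unambiguously defined, and the chain of identities in the proof of \Cref{th:cutN0T0violated} shows $\Arhs_{\Tcut}(\xxinf)=g$.

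Next I would observe that one inequality is automatic. Splitting $\Arhs_{\Tcut}(\xx\Kexp)=\sum_{t\in\Tcut}\sum_{n\in\Nall}\x\nt\Kexp$ over $n\in\Ncut$ and $n\notin\Ncut$, and using only the demand constraint $\sum_{t}\x\nt\Kexp=E_n$ (valid since $\xx\Kexp\in\X$) together with the box constraints $\lbx\nt\leq \x\nt\Kexp\leq\ubx\nt$, one gets precisely the computation of the proof of \Cref{th:cutN0T0violated}, now with ``$\leq$'' in place of each ``$=$''. This yields $\Arhs_{\Tcut}(\xx\Kexp)\leq g=\Arhs_{\Tcut}(\xxinf)$, a bound that holds at \emph{every} feasible point and requires no convergence information. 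The substance of the proposition is therefore the reverse inequality.

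For the reverse inequality I would show that $\xx\Kexp$ already saturates the same bounds as $\xxinf$, i.e. that facts \ref{claimFactcut1} and \ref{claimFactcut3} of \Cref{prop:factsOncut} hold for the iterate: $\x\nt\Kexp=\ubx\nt$ for $t\in\Tcut,\ n\notin\Ncut$, and $\x\nt\Kexp=\lbx\nt$ for $t\notin\Tcut,\ n\in\Ncut$. Since the projection onto $\X=\prod_n \X_n$ decouples over agents and each $P_{\X_n}$ has the water-filling form $\x\nt=P_{[\lbx\nt,\ubx\nt]}(\y\nt+\theta_n)$ (with $\theta_n$ the multiplier of the demand constraint), a coordinate saturates at step $K$ as soon as $\yy^{(K-1)}$ lies on the correct side of the associated breakpoint. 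In the limit these breakpoints are crossed with a positive margin (measured by the bound multipliers $\omu\nt,\umu\nt$), and fact \ref{claimFactcut1} already records the strict inequality $\y^\infty\nt>\ubx\nt$ on the upper-saturated block. Invoking the geometric convergence of \Cref{prop:geometricCvgGeneral} and the preceding lemma (together with its $\yy$-analogue), which bound $\norm{\yyinf-\yy^{(K-1)}}$ by $\tfrac{\epscvg}{1-\rho}$, I would conclude that for $\epscvg$ small enough $\yy^{(K-1)}$ stays inside those margins, so the same faces of $\X$ are active at $\xx\Kexp$ and at $\xxinf$. Plugging these saturations into the exact identities of \Cref{th:cutN0T0violated} then gives $\Arhs_{\Tcut}(\xx\Kexp)=g=\Arhs_{\Tcut}(\xxinf)$.

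The main obstacle is exactly this finite identification. The gap condition of \Cref{prop:convTinf} controls only $\lnu$, the least nonzero $|\nu_t^\infty|$, which pins down the sign pattern of $\nnu^\infty$ and hence $\Tcut$, but it does not a priori control the margins by which the box constraints are active. Making the step fully rigorous therefore requires either a non-degeneracy (strict complementarity) property of the limiting projection, guaranteeing the breakpoint margins are bounded away from $0$, or an explicit strengthening of the smallness demanded of $\epscvg$ in terms of those margins. I expect the transportation structure to help: on the block $\Tcut\times\Ncut^c$ the active upper bounds are tied through the demand constraints to the strictly positive multipliers $\nu_t^\infty$ and, via fact \ref{claimFactcut4}, to the strict sign $\lambda_n<0$ on $\Ncut$, which should provide a quantitative lower bound on the relevant margins and close the argument.
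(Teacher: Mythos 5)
Your reduction is exactly the paper's: \Cref{prop:exactnessAT0K} is obtained there from a finite active-set identification lemma (\Cref{lemm:factsCutFinite}: $\x\nt\Kexp=\x^\infty\nt=\ubx\nt$ for $t\in\Tcut,\ n\notin\Ncut$, and $\x\nt\Kexp=\x^\infty\nt=\lbx\nt$ for $t\notin\Tcut,\ n\in\Ncut$), followed by the same algebraic chain as in \Cref{th:cutN0T0violated}; your preliminary observations ($\Ncut$ is determined by $\Tcut$ and the data, and $\Arhs_{\Tcut}(\xx\Kexp)\leq \Arhs_{\Tcut}(\xxinf)$ holds at any feasible point) are correct. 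But the identification step is the whole content of the proposition, and you leave it unproved: you state it would need either a strict-complementarity property of the limiting projection, or a strengthening of the smallness of $\epscvg$ in terms of active-constraint margins that \Cref{prop:convTinf} does not control. You are right that those margins are uncontrolled, which is precisely why your direct route --- showing $\yy\Kexpp$ stays on the saturated side of every breakpoint --- cannot be made to work; however, the paper does prove the lemma under the hypotheses of \Cref{prop:convTinf} alone, with no nondegeneracy assumption, so the extra hypotheses you contemplate are not needed.

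The paper's argument (\Cref{app:proofCutFinite}) is a proof by contradiction whose target is combinatorial, not metric. Suppose $n\notin\Ncut$ and $\hatt\in\Tcut$ with $\x\nth\Kexp<\ubx\nth$. Since that coordinate is then not clipped at its upper bound, the stationarity conditions of the projection at step $K$ give $\x\nth\Kexp\geq \y\nth\Kexpp+\lambda_n\Kexp$; combining this with the \emph{limit} facts $\x^\infty\nth=\ubx\nth$ and $\nu_{\hatt}^\infty\geq\lnu>2\Bseuil\epscvg$ (from \Cref{prop:factsOncut}) and with the geometric-convergence bound $|\y\nth\Kexpp-\y^\infty\nth|\leq\tfrac{\epscvg}{2(1-\rho)}$ forces $\lambda_n\Kexp<-\tfrac{3}{2}\Bseuil\epscvg$. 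This strong negativity in turn gives $\nu_{t'}\Kexp>\Bseuil\epscvg$, hence $t'\in\Tcut\Kexp=\Tcut^\infty$, for every $t'\in{\Tin_n}\Kexp\cup\oH_n\Kexp$; so every $t\in\Tcut^c$ is at the lower bound for agent $n$ at step $K$, and rerunning the chain \eqref{eq:ineqy0t0}--\eqref{eq:H0emptycontradiction2} with $\yy\Kexpp$ in place of $\yy^\infty$ yields $E_n-\sum_{t\notin\Tcut}\lbx\nt-\sum_{t\in\Tcut}\ubx\nt<0$, i.e.\ $n\in\Ncut$ --- a contradiction. Because the contradiction lands on the defining inequality of $\Ncut$, which involves only the data $(E_n,\llbx_n,\uubx_n)$ and $\Tcut$, and not on any margin of the active constraints, no strict complementarity and no further restriction on $\epscvg$ is required. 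Your proof can be completed, but only by this (or an equivalent) contradiction route; the uniform-margin route you sketch genuinely fails.
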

\begin{proof}
We start by showing some technical properties similar to \Cref{prop:factsOncut}:
\begin{lemma} \label{lemm:factsCutFinite}
The iterate $\xx\Kexp$ satisfies the following properties:
\begin{enumerate}[label=(\roman*),wide]
\item \label{claimFactcutFinite1} $\forall t \in \Tcut, \forall n \notin \Ncut,  {x}\nt\Kexp =\x^\infty\nt= \ubx\nt$ ; 
\item \label{claimFactcutFinite2}$\forall t \notin \Tcut, \forall n \in \Ncut, \ {x}\nt\Kexp=\x^\infty\nt =\lbx\nt$ \ .
\end{enumerate}
\end{lemma}
The proof of \Cref{lemm:factsCutFinite} is similar to \Cref{prop:factsOncut} and is given in \Cref{app:proofCutFinite}. Then, having in mind that $\Tcut\Kexp= \Tcut^\infty$ from \Cref{prop:convTinf}, and $\Ncut$ is obtained from $\Tcut^\infty$  by \eqref{eq:H0N0withoutbounds}, we obtain:
\begin{align*}
\Arhs_{\Tcut}(\xx\Kexp) &= \sum_{n\in\Ncut} \Big( \sum_{t\notin \Tcut} \lbx\nt + \sum_{t\in \Tcut} \x\nt\Kexp \Big)  - \sum_{t\notin \Tcut, n \in \Ncut} \lbx\nt + \sum_{t\in \Tcut, n \notin \Ncut} \x\nt\Kexp \\
& \leq \sum_{n\in\Ncut} \sum_{t\in\Tall} \x\nt\Kexp - \sum_{t\notin \Tcut, n \in \Ncut} \lbx\nt + \sum_{t\in \Tcut, n \notin \Ncut} \ubx\nt \quad \text{ (from \Cref{lemm:factsCutFinite}) } 
\end{align*}
which equals to $\Arhs_{\Tcut}(\xxinf)$  as we have $\sum_{t\in\Tall} \x\nt\Kexp=E_n$ for each $\n\in\Nall$.
\end{proof}

Before presenting our algorithm using this last result, we focus on the technique of secure multiparty computation (SMC) which will be used here to ensure the privacy of agent's constraints and profiles while running APM.%

\subsection{Privacy-Preserving Projections through SMC}
\label{subsec:APMwithSMC}
APM, as described in  \Cref{algo:vonNeumannProj}, enables a distributed implementation in our context, by the structure of the algorithm itself: the operator computes the projection on $\Y_{\pp}$ while each agent $n$ can compute, possibly in parallel, the projection on $\X_n$ of the new profile transmitted by the operator. 
This enables each agent (as well as the operator) to keep her individual constraint and not reveal it to the operator or other agents. 
However, if this agent had to transmit back her newly computed individual profile to the operator for the next iteration, privacy would not be preserved.
We next show that, using a  secure multi-party computation (SMC) summation protocol~\cite{yao1986secrets} we can avoid this communication of individual profiles and
implement APM without revealing the sequence of agents profiles $\xE$ to the operator. 

For this, we use the fact that $\Ypp$ is an affine subspace and thus the projection on  $\Ypp$ can be obtained explicitly component-wise. Indeed, as $\P_{\Ypp}(\xx)$ is the solution $\yy$ of the quadratic program $\min_{\yy \in \rr^{NT}  | \sum_n \yy_n= \pp } \tfrac{1}{2} \|\xx-\yy\|_2^2$, we obtain optimality conditions similar to \eqref{eq:stationarityKKT:ynt}, and summing these equalities on $\Nall$, we get $N \nu_t = \pp_t- \sum_{n\in\Nall} \x\nt$ for each $t\in\Tall$, and thus:
\begin{equation} \label{eq:explicitProjY}
\forall n \in\Nall, \ [\P_{\Ypp}(\xx)]_n = \xx_n + \tfrac{1}{N}(\pp-\txt\sum_{m\in\Nall} \xx_m) \ .
\end{equation}
Thus, having access to the \emph{aggregate} profile $\SS \eqd \sum_{n\in\Nall} \xx_n$, each agent can compute locally the component  of the projection on $\Ypp$ of her profile, instead of transmitting the profile to the operator for computing the projection in a centralized way.

Using SMC principle, introduced by \cite{yao1986secrets} and \cite{goldreich2019howto},  the sum $\SS$ can be computed in a non-intrusive manner and by several communications between agents and the operator, as described in \Cref{algo:SMCsum}.
The main idea of the SMC summation protocol \cite{atallah2004private,shi2016secure} below is that, instead of sending her profile $\xE_n$,  agent  $n$ \emph{splits} $\x\nt$  for each $t$ into $N$ random parts $(s_{n,t,m})_m$, according to an uniform distribution and summing to $\xE\nt$ (Lines \ref{algline:SMC:cut1}-\ref{algline:SMC:cut2}). Thus, each part $s_{n,t,m}$ taken individually does not reveal any information on $ \xE_n$ nor on $\X_n$, and can be sent to agent $m$. 
Once all exchanges of parts are completed (Line 5), and $n$ has herself received the parts from other agents, agent $n$ computes a new aggregate quantity $\ssig_n$ (Line \ref{algline:NIAPM-computesigma}), which does not contain either any information about any of the agents, and sends it to the operator (Line \ref{algline:NIAPM-Sendsigma}). The operator can finally compute the quantity $\SS=\xE^\tr \dsoneN= \ssig^\tr \dsoneN$. 

\begin{algorithm}[!ht]
\begin{algorithmic}[1]
 \Require  profile $\xx_n$ for each  ${n\in \Nall} $, parameter $R> \max_t \max\{\sum_n \x\nt\} $ given to each  agent \;
  
	\For{each agent $\n \in \Nall$}
  \State Draw $\forall t, (s_{n,t,m})_{m=1}^{N-1} \hh \in\hh \mathcal{U}([0,R]^{N-1})$ \; \label{algline:SMC:cut1}
  \State and set $\forall t,  s_{n,t,N}\hh \eqd \hh {\x}_{n,t}- \sum_{m=1}^{N-1} s_{n,t,m} \mathrm{mod}\ R$\; \label{algline:SMC:cut2}
  \State Send $(s_{n,t,m})_{t\in\Tall} $ to agent $m \in \Nall$ \;
  \EndFor
  \For{each agent $\n \in \Nall$}
  \State Compute $\forall t, \sigma_{n,t} = \sum_{m\in\Nall} s_{m,t,n} \mathrm{mod} \ R $  \label{algline:NIAPM-computesigma}\;
  \State Send $(\sigma_{n, t})_{t\in\Tall}$ to operator \label{algline:NIAPM-Sendsigma}\;
  \EndFor \label{algline:NIAPM-SendsigmaOp} 
  \State Operator computes $\SS= \sum_{n\in\Nall} \ssig_n \mathrm{mod} \ R$ (and broadcasts it to agents)\;
\end{algorithmic}
\caption{SMC of Aggregate (SMCA) $\sum_ {n\in\Nall} \xx_n$}
\label{algo:SMCsum}
\end{algorithm}
\begin{remark}
  A drawback of the proposed SMC summation (\Cref{algo:SMCsum}) is that each agent need to exchange with all other nodes, therefore the communication overhead for each iteration and each node is $\mc{O}(N \ell)$ where $\ell$ is an upper bound on the length of the message (e.g number of bits encoding constant $R$).
Splitting the messages with the other $N-1$ nodes is required  to obtain privacy guarantees against collusion of strictly less than $N-1$ nodes (see \Cref{sec-guarantees}).
However, as detailed in  \cite[Protocol I]{atallah2004private}  the SMC summation can be adapted by splitting secret numbers among only $k \in \{1,\dots, N\}$ members. In that case, the communication overhead would be reduced to $\mc{O}(k \ell)$, but privacy would only be protected against collusion of less than $k-1$ agents. This choice has to be made as a tradeoff between computation overhead and privacy guarantees.
\end{remark}
We sum up in \Cref{algo:vonNeumannProjasymConfid} below the procedure of generating a new constraint as stated in \Cref{th:cutN0T0violated} from the output of APM in finite time (see \Cref{prop:convTinf})  and in a privacy-preserving way using SMC.

\begin{algorithm}[!ht]
\begin{algorithmic}[1]
\Require Start with $\yy^{(0)}$, $k\hh=1 $, $\epscvg,\epsdis$, norm $\norm{.}$ on $\rr^{NT}$ \;
\Repeat \label{algline:NIAPM-mainLoop}
 	\For{each agent $\n \in \Nall$}
  \State  ${\xx}_n^{(k)} \leftarrow P_{\X_n}( {\yy}_n^{(k-1)})$  \label{algline:NIAPM-compute-x}\;
  \EndFor
 \State Operator obtains $\SS^{(k)} \leftarrow $SMCA($\xx^{(k)}$)  (\textit{cf} Algo.~\ref{algo:SMCsum}) \label{algline:NIAPM-optain-agg}\;
\State  and sends $\nnu^{(k)}\hh\eqd\frac{1}{N}( \pp-\SS^{(k)} ) \in \rr^T$ to agents $\Nall$ \label{algline:NIAPM-opsendsnu}\;
  	\For{each agent $\n \in \Nall$}
  	\State Compute $\xP_\n^{(k)} \leftarrow \xE_\n^{(k)} + \nnu^{(k)} $ \label{algline:NIAPM-compute-y} \Lcomment{ from \eqref{eq:stationarityKKT:ynt} and \eqref{eq:explicitProjY},  $\yy\kexp= P_{\Y_{\pp}}(\xx\kexp)$ }\;
\EndFor
\State  $k \leftarrow k+1$ \;
\Until{ $ \norm{\xx^{(k)} - \xx^{(k-1)}  } < \epscvg$ } \label{algline:NIAPM-conv-APM}
\If { $\norm{\xx^{(k)} - \yy^{(k)} } \leq \epsdis$} \label{algline:NIAPM-conv-disag}
\Lcomment{found a $\epsdis$-solution of the disaggregation problem}
\State  Each agent adopts profile $\xE_\n^{(k)} $ \;	
\State \Return \textsc{Disag} $\leftarrow$ \textsc{True} \;
\Else 
\Lcomment{have to find a valid constraint violated by $\pp$} 
\State Operator computes $\Tcut \leftarrow \{t \in \Tall \ | \ \Bseuil \epscvg < \nu_t^{(k)}  \}$  \label{algline:NIAPM-defH0asymConfid}\;
\State Operator computes $\Arhs_{\Tcut} \leftarrow $ SMCA( $ (\xx_t^{(k)})_{t\in\Tcut} $)    \;
\If {$\Arhs_{\Tcut}- \sum_{t\in\Tcut} \pp_t <0$} \label{algline:NIAPM:checkconstraint}
\State \Return \textsc{Disag} $\leftarrow$ \textsc{False}, $\Tcut, \ \Arhs_{\Tcut}$ \label{algline:NIAPM-returncons}\;
\Else 
\Lcomment{need to run APM with higher precision}
\State Return to Line~\ref{algline:NIAPM-mainLoop} with $\epscvg \leftarrow \epscvg/2$ \label{algline:NIAPM-resolve}\;
\EndIf \;
\EndIf \;
\end{algorithmic}
\caption{Non-intrusive APM %
 (NI-APM)}
\label{algo:vonNeumannProjasymConfid}
\end{algorithm}

To choose $\Bseuil$ and $\epscvg$ satisfying the conditions of \Cref{prop:convTinf} \textit{a priori}, one has to know the value of $\lnu$. Although a conservative lower bound could be obtained by Diophantine arguments if we consider rationals as inputs of the algorithm, in practice it is easier and more efficient to proceed in an iterative manner for the value of $\epscvg$.  
Indeed, one can start with $\epscvg$ arbitrary large so that APM will converge quickly, and then check if the cut obtained is violated by the current value of $\pp$ (\Cref{algline:NIAPM:checkconstraint}): if it is not the case, we can continue the iterations of APM with convergence precision improved to $\epscvg/2$ (\Cref{algline:NIAPM-resolve}). \Cref{prop:convTinf} ensures that this loop terminates in finite time.

The parameter $\epsdis>0$ (Line \ref{algline:NIAPM-conv-disag} of \Cref{algo:vonNeumannProjasymConfid}) has to be chosen a priori by the operator, depending on the  precision required. In general in APM, $\xxinf=\yyinf$  will only be achieved in infinite time, so choosing $\epsdis$ strictly positive is required. 

\medskip

We end this section by summarizing in \Cref{algo:confidOptimDisag} the global iterative procedure to compute an optimal and disaggregable resource allocation $\pp$, solution of the initial problem \eqref{pb:global}, using iteratively NI-APM (\Cref{algo:vonNeumannProjasymConfid}) and adding constraints as stated in \Cref{th:cutN0T0violated}.

\begin{algorithm}[!ht]
\begin{algorithmic}[1]
\Require $s=0$ , $\P^{(0)}=\P$ ;
\dsg = \textsc{False} \;
\While{Not \dsg}
\State Solve $\min_{ \pp \in \P\ids } f(\pp)$ \label{algline:Optim:masterProb} \; 
 \If {problem infeasible}
 \State Exit  \label{algline:exitNoSolution}\;
 \Else 	\State Compute $\pp\ids= \arg\min_{ \pp \in \P\ids } f(\pp)$
	\EndIf\; 
 	\State \dsg $\leftarrow$ NI-APM($\pp\ids$)  \ (Algo.~\ref{algo:vonNeumannProjasymConfid})\;
	\If{ \dsg }  \label{algline:Optim:disagTrue}
	\State Operator adopts $\pp\ids $ \;
	\Else \label{algline:Optim:disagNot}
 \State Obtain $ \Tcut\ids, \Arhs_{\Tcut}\ids $ from NI-APM($\pp\ids$) \; 
	\State $\P^{(s+1)} \leftarrow \P\ids \cap \{ \pp | \sum_{t\in\Tcut\ids} p_t \leq  \Arhs_{\Tcut}\ids \}$ \label{algline:Optim:addConstraint}\;
 \EndIf \;
 \State $s \leftarrow s+1 $\;
\EndWhile  \;
\end{algorithmic}
\caption{Non-intrusive Optimal Disaggregation}
\label{algo:confidOptimDisag}
\end{algorithm}

\Cref{algo:confidOptimDisag} iteratively calls  NI-APM (\Cref{algo:vonNeumannProjasymConfid}) and in case disaggregation is not possible (Line \ref{algline:Optim:disagNot}), a new constraint is added (Line \ref{algline:Optim:addConstraint}), obtained from the quantity $\Arhs_{\Tcut}$ defined in \eqref{eq:cutConfidWithoutN0}, to the feasible set of resource allocations $\P^{(s)}$ in problem \eqref{pb:master}. 
This constraint is an inequality on $\pp$ and thus does not reveal significant individual information to the operator. 
The algorithm stops when disaggregation is possible (Line \ref{algline:Optim:disagTrue}). 
The termination of \Cref{algo:confidOptimDisag} is ensured by the following property and the form of the constraints added \eqref{eq:cutViolatedH0N0asym}:
\begin{proposition} \label{prop:finiteNumberCuts}
\Cref{algo:confidOptimDisag} stops after a finite number of iterations, as at most $2^T-2$ constraints (Line \ref{algline:Optim:addConstraint}) can be added to the master problem (Line \ref{algline:Optim:masterProb}).
\end{proposition}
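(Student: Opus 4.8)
The plan is to bound the number of cuts appended in Line~\ref{algline:Optim:addConstraint} by the number $2^T-2$ of proper nonempty subsets of $\Tall$, and then to observe that the while loop of \Cref{algo:confidOptimDisag} can only run one iteration per new cut (plus a final terminating one). The single fact that drives the whole argument is that the right-hand side $\Arhs_{\Tcut}$ of each cut is a constant depending only on the fixed agent data $(E_n,\llbx_n,\uubx_n)$ and on the subset $\Tcut$, and \emph{not} on the current allocation $\pp\ids$.

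First I would attach to each appended cut its index set $\Tcut\subset\Tall$. By \Cref{prop:factsOncut} (item~\ref{claimFactcutNonemptysets}), whenever disaggregation fails the set $\Tcut$ returned by APM is a \emph{proper} and \emph{nonempty} subset of $\Tall$, and there are exactly $2^T-2$ such subsets. Next I would check that the recorded value $\Arhs_{\Tcut}$ is the tightest Hoffman bound attached to $\Tcut$, i.e.\ the right-hand side of~\eqref{eq:ineq2powTv1}: this is exactly what the rewriting in \Cref{th:cutN0T0violated} gives, since there $\Arhs_{\Tcut}(\xEinf)=\sum_{n\in\Ncut}E_n+\sum_{t\in\Tcut,\,n\notin\Ncut}\ubx\nt-\sum_{t\notin\Tcut,\,n\in\Ncut}\lbx\nt$, with $\Ncut$ itself determined from $\Tcut$ and the agent data through~\eqref{eq:H0N0withoutbounds}. \Cref{prop:exactnessAT0K} then ensures the finite-time quantity $\Arhs_{\Tcut}(\xx\Kexp)$ actually equals $\Arhs_{\Tcut}(\xEinf)$, so the constant stored in the master problem is precisely this $\pp$-independent bound.

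The main step is to show that no index set $\Tcut$ is used twice. Suppose the cut indexed by $\Tcut$ is appended at iteration $s$, so that $\P^{(s+1)}\subset\{\pp\mid\sum_{t\in\Tcut}p_t\leq\Arhs_{\Tcut}\}$. Because the family $(\P\ids)_{s}$ is nested and decreasing, every later allocation satisfies $\pp^{(s')}\in\P^{(s')}\subset\P^{(s+1)}$, hence $\sum_{t\in\Tcut}p^{(s')}_t\leq\Arhs_{\Tcut}$. But a cut for $\Tcut$ is appended only when the test of Line~\ref{algline:NIAPM:checkconstraint} passes, namely $\Arhs_{\Tcut}-\sum_{t\in\Tcut}p_t<0$; since $\Arhs_{\Tcut}$ is the \emph{same} constant at every iteration, this strict violation can never recur for that $\Tcut$. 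Applying this to each previously appended cut shows that the index sets are pairwise distinct, so at most $2^T-2$ cuts can ever be produced. As each pass through the while loop either terminates (disaggregation found at Line~\ref{algline:Optim:disagTrue}, or master problem infeasible at Line~\ref{algline:exitNoSolution}) or appends exactly one new cut, the loop halts after at most $2^T-1$ iterations.

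I expect the delicate point to be precisely the $\pp$-independence of $\Arhs_{\Tcut}$, on which the counting rests: if the stored bound were allowed to depend on the allocation, a subset $\Tcut$ could be revisited later with a looser value and the argument would collapse. Isolating this from \Cref{th:cutN0T0violated} and \Cref{prop:exactnessAT0K}, together with the nonemptiness of $\Tcut,\Tcut^c$ from \Cref{prop:factsOncut}, is the crux; the remainder is bookkeeping on the nested family $(\P\ids)_{s}$.
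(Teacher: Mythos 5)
Your proof is correct and takes essentially the approach the paper intends: the paper states this proposition without a formal proof, the implicit reasoning being exactly your counting argument — each cut is a Hoffman inequality determined solely by a proper nonempty subset $\Tcut\subset\Tall$ (nonemptiness from \Cref{prop:factsOncut}), with right-hand side $\Arhs_{\Tcut}$ depending only on $\Tcut$ and the fixed agent data, so an added cut is enforced in every later master problem, can never be strictly violated (hence never regenerated) again, and at most $2^T-2$ cuts can be produced. Your explicit isolation of the $\pp$-independence of $\Arhs_{\Tcut}$ via \Cref{th:cutN0T0violated} and \Cref{prop:exactnessAT0K} makes rigorous precisely the point the paper leaves implicit.
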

The following \Cref{prop:correctnessAlgo} shows the correctness of our \Cref{algo:confidOptimDisag}.
\begin{proposition}\label{prop:correctnessAlgo}
Let $\Bseuil$ and $\epscvg$ satisfy the conditions of \Cref{prop:convTinf} and \ref{prop:exactnessAT0K}. Then:
\begin{enumerate}[wide,label=(\roman*)]
\item if the problem \eqref{pb:global} has no solution,  \Cref{algo:confidOptimDisag} exits at Line \ref{algline:exitNoSolution} after at most $2^T-2$ iterations;
\item otherwise, \Cref{algo:confidOptimDisag}   computes,  after at most $s \leq 2^T-2$ iterations, an aggregate solution $\pp\ids \in \P$, associated to individual profiles $(\xx^*)_n = $ NI-APM($\pp\ids$) such that:
\begin{equation*}
\pp\ids \in \P, \quad, \forall n\in\Nall, \xx_n ^*\in \X_n, \quad \big\|\txt\sum_{n\in\Nall} \xx_n^* - \pp\ids \big\| \leq \epsdis \quad  \text{ and } f(\pp\ids) \leq f^* \ ,
\end{equation*}
 where $f^*$ is the optimal value of problem \eqref{pb:global}. 
\end{enumerate}
\end{proposition}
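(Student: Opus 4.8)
The plan is to run the whole argument off a single loop invariant: at every iteration $s$ of \Cref{algo:confidOptimDisag}, the current master domain contains all disaggregable allocations, i.e.\ $\Popt \subseteq \P^{(s)}$. Granting this, statement (ii) becomes almost immediate: since $\pp^{(s)}$ minimises $f$ over $\P^{(s)} \supseteq \Popt$, one gets $f(\pp^{(s)}) = \min_{\P^{(s)}} f \leq \min_{\Popt} f = f^*$, which is exactly the claimed bound; moreover, when the problem is feasible ($\Popt \neq \emptyset$) the master domain is never empty, so the algorithm cannot exit at \Cref{algline:exitNoSolution}. The remaining work is therefore (a) to prove the invariant, i.e.\ that every cut appended at \Cref{algline:Optim:addConstraint} is valid for $\Popt$; (b) to prove termination within $2^T-2$ iterations; and (c) to read off the three feasibility properties of the returned triple.

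For (a) I would show that the cut returned by NI-APM is \emph{exactly} the tightest Hoffman inequality attached to the set $\Tcut$. By \Cref{prop:convTinf} and \Cref{prop:exactnessAT0K}, under the stated hypotheses on $\Bseuil$ and $\epscvg$ the quantities computed in finite time coincide with their asymptotic counterparts, namely $\Tcut = \Tcut^\infty$ and $\Arhs_{\Tcut} = A_{\Tcut}(\xxinf)$. Then, unwinding the identity established inside the proof of \Cref{th:cutN0T0violated} together with the definition \eqref{eq:H0N0withoutbounds} of $\Ncut$, I would show that $A_{\Tcut}(\xxinf)$ equals the right-hand side of \eqref{eq:ineq2powTv1} for this $\Tcut$: indeed $\Ncut$ is precisely the minimiser over subsets appearing there, since it collects exactly the agents with negative bracket. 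Hence the added constraint $\sum_{t\in\Tcut} p_t \leq \Arhs_{\Tcut}$ is the sharpest inequality \eqref{eq:hoffmanCondReverse} for $\Tcut$, which by \Cref{prop:hoffmanDisag} is satisfied by every $\pp\in\Popt$; the invariant then propagates by induction from $\P^{(0)}=\P\supseteq\Popt$.

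Termination (b) follows once I observe that a given $\Tcut$ can generate a violated cut at most once: after its inequality has been appended, every subsequent $\pp^{(s')}\in\P^{(s')}$ satisfies it, so the same $\Tcut$ can never again pass the strict violation test at \Cref{algline:NIAPM:checkconstraint}. As $\Tcut$ is always a proper nonempty subset (nonemptiness of $\Tcut$ and $\Tcut^c$ in \Cref{prop:factsOncut}), there are at most $2^T-2$ distinct cuts, matching \Cref{prop:finiteNumberCuts}. In the feasible case this forces a return of \textsc{Disag}$=$\textsc{True} within $2^T-2$ iterations: at the latest once $\P^{(s)}=\Popt$, any master optimum is disaggregable and passes the test at \Cref{algline:NIAPM-conv-disag}. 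For (c), the adopted profile satisfies $\xx^*_n = \xx^{(k)}_n = P_{\X_n}(\yy^{(k-1)}_n)\in\X_n$ by construction (\Cref{algline:NIAPM-compute-x}), and $\pp^{(s)}\in\P^{(s)}\subseteq\P$; the disaggregation residual is controlled by the stopping criterion, since \eqref{eq:explicitProjY} gives $\sum_n \xx^{(k)}_n - \pp^{(s)} = -N\nnu^{(k)}$ with $\yy^{(k)}_n - \xx^{(k)}_n = \nnu^{(k)}$, so that $\big\|\sum_n \xx^*_n - \pp^{(s)}\big\|$ is bounded by $\epsdis$ (up to the dimensional factor relating the two norms), exactly the test $\norm{\xx^{(k)}-\yy^{(k)}}\leq\epsdis$ at \Cref{algline:NIAPM-conv-disag}.

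Finally, for (i), when $\Popt=\emptyset$ no allocation is disaggregable, so each master optimum $\pp^{(s)}$ yields a genuinely violated Hoffman cut; by the distinctness argument above, after at most $2^T-2$ cuts the domain $\P^{(s)}$ equals $\P$ intersected with all $2^T-2$ Hoffman inequalities, which by \Cref{prop:hoffmanDisag} is exactly $\Popt=\emptyset$, so the master problem becomes infeasible and the algorithm exits at \Cref{algline:exitNoSolution}. I expect the main obstacle to be precisely this interface between the exact combinatorial structure (Hoffman) and the \emph{approximate}, finite-precision behaviour of APM: I must guarantee that the finite-time cut is simultaneously exact (so the invariant is never broken by a slightly too aggressive cut that would remove a point of $\Popt$) and genuinely violated, and in case (i) rule out that NI-APM prematurely reports an $\epsdis$-disaggregation for a non-disaggregable $\pp^{(s)}$. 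This is exactly where the hypotheses of \Cref{prop:convTinf,prop:exactnessAT0K} and the refinement loop at \Cref{algline:NIAPM-resolve} carry the argument.
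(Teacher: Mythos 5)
Your proof is correct and takes essentially the same route as the paper: the paper's own proof is a one-line appeal to \Cref{th:cutN0T0violated}, \Cref{prop:convTinf} and \Cref{prop:exactnessAT0K} (together with the bound of \Cref{prop:finiteNumberCuts}), and your loop invariant $\Popt \subseteq \P^{(s)}$, the one-cut-per-$\Tcut$ termination count, and the read-off of the feasibility properties are exactly the details that this appeal leaves implicit. The two caveats you honestly flag --- the norm/dimensional factor relating $\norm{\xx^{(k)}-\yy^{(k)}}$ to $\big\|\sum_{n\in\Nall}\xx_n^*-\pp^{(s)}\big\|$, and the possibility in case (i) that NI-APM accepts an $\epsdis$-approximate disaggregation of a non-disaggregable $\pp^{(s)}$ when $\epsdis$ exceeds the infeasibility gap --- are imprecisions of the paper's own statement rather than gaps in your argument.
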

\begin{proof}
The proof is immediate from \Cref{th:cutN0T0violated}, \Cref{prop:convTinf} and \Cref{prop:exactnessAT0K}.
\end{proof}

\begin{remark}
  The upper bound on the number of constraints added has no dependence on $N$ because, as stated in \eqref{eq:ineq2powTv1}, once a subset of $\Tall$ is chosen, the constraint we add in the algorithm is found by taking the minimum over the subsets of $\Nall$.
\end{remark}
Although there exist some instances with an exponential number of independent constraints, this does not jeopardize the proposed method: %
 in practice, the algorithm stops after a very small number of constraints added.
  Intuitively, we will only add constraints ``supporting'' the optimal allocation $\pp$.
Thus, \Cref{algo:confidOptimDisag} is a method which enables the operator to compute a resource allocation $\pp$ and the $N$  agents to adopt profiles $(\xx_n)_n$, such that $(\xx,\pp)$ solves the global problem \eqref{pb:global}, and the method ensures that both 
 agent constraints (upper bounds $\uubx_n$, lower bounds $\llbx_n$, demand $E_n$); 
and disaggregate (individual) profile $\xx_n $ (as well as the iterates $(\xE^{(k)})_k$ and $(\xP^{(k)})_k$ in NI-APM)
are kept confidential by agent $n$ and can not be induced by a third party (either the operator or any other agent $m\neq n$).

\begin{remark} %
\label{rem:SMCLagrangian} A natural approach to address problem \eqref{pb:global} in a distributed way, assuming that both the cost function $\pp \mapsto f(\pp)$ and the feasibility set $\P$ are convex, is to rely on Lagrangian based decomposition techniques. Examples of such methods are Dual subgradient methods \cite[Chapter 6]{bertsekas1997nonlinear}, auxiliary problem principle method \cite{cohen84decomposition}, ADMM \cite{glowinski1975approximation},\cite{yu2017simple} or bundle methods \cite{lemarechal1995new}. 

It is conceivable to develop a privacy-preserving implementation of those techniques, where Lagrangian  multipliers associated to the (relaxed) aggregation constraint $\sum_n \xx_n = \pp$ would be updated using the  SMC technique as described in \Cref{algo:SMCsum}. 
However, those techniques usually ask for strong convexity hypothesis: for instance, in ADMM, in order to keep the decomposition structure in agent by agent, a possibility is to use multi-blocs ADMM  with $N+1$ blocs ($N$ agents and the operator), which is known to converge in the condition that strong convexity of the cost function  in at least $N$ of the $N+1$ variables holds  \cite{deng2017parallel}. 
The study of privacy-preserving implementations of Lagrangian decomposition methods is left for further work.

The advantage of  \Cref{algo:confidOptimDisag} proposed in this paper is that convergence is ensured (see \Cref{prop:finiteNumberCuts}) even if the cost function $\pp \mapsto f(\pp)$ and the feasibility set $\P$ are not convex, which is the case in many practical situations (see \Cref{subsec:appli-microgrid}).
\end{remark}
\subsection{Privacy guarantees}\label{sec-guarantees}
We next analyze the protection of privacy of the present optimization
algorithm, considering different situations.
For this analysis, we consider the paradigm of \emph{honest, semi-honest} and \emph{malicious} agents.
The term {\em semi-honest} (also called \emph{honest but curious}  \cite{abbe2012privacy},\cite{ruan2019secure}) refers to a user who obeys
the protocol, but wishes to use the data obtained through the algorithm
to infer private information.
In contrast, a {\em malicious}
user may even disobey the protocol.

\paragraph{Malicious operator and honest agents}
As a first step, we consider an ideal model, in which we only
analyze the loss of privacy caused by the algorithm,
through communications
between honest agents and a malicious (or semi-honest) central operator.

We neglect, in this situation, the information leaks induced
by inter-agent communications through
the secure multiparty protocol.

Through \Cref{algo:confidOptimDisag}, the operator obtains the following information  for each iteration $s \in \{1,\dots, \scvg\} $ (where $\scvg<\infty$ is the number of iterations needed for convergence):
\begin{itemize}
\item the set   $\Tcut\ids$ and the scalar $\Arhs_{\Tcut}\ids$ defining the cut;
\item whenever \textsc{ni-apm} is called, the serie $(S^{(s,k)})_{k=1}^{K_s}$ of the aggregated projections.
\end{itemize}
Let us denote by $\mathbb{I}^{\text{op}}\eqd\big\{(S^{(0,k)})_{k=1}^{K_0}, \Tcut^{(0)}, \Arhs_{\Tcut}^{(0)}, \dots, (S^{(\scvg,k)})_{k=1}^{K_{\scvg}}, \Tcut^{(\scvg)}, \Arhs_{\Tcut}^{(\scvg)} \big\} $ the information obtained by the operator.
The next proposition implies that, in this ideal model,
the operator cannot discern the individual profiles or individual constraints.

\begin{proposition}\label{prop-safe}
  Consider two different instances of the resource allocation
  problem, differing only by a permutation of the agents,
  meaning that for all $n\in \Nall$, $\mathcal{X}_n$ is replaced
  by $\mathcal{X}_{\sigma(n)}$, for some permutation $\sigma$
  of the elements of $\Nall$. Then, the information $\mathbb{I}^{\text{op}}$ gotten
  by the operator (as well as the information $(\nnu^{(k)})_k$ sent by the operator to agents in \Cref{algo:vonNeumannProjasymConfid}), and the sequence of cuts added
  in the master problem (\Cref{algo:confidOptimDisag}),
  are precisely the same in both instances.
\end{proposition}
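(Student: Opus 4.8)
The plan is to exploit the structural fact that \emph{every} quantity the operator observes or transmits in \Cref{algo:vonNeumannProjasymConfid,algo:confidOptimDisag} is an \emph{aggregate} over the agents, i.e.\ a sum of the form $\sum_{n\in\Nall}(\cdots)$, and that such sums are insensitive to the order of the summands. I would set up an explicit correspondence between the two instances: write $\xx_n\kexp,\yy_n\kexp$ for the iterates of NI-APM in the original instance (where agent $n$ carries $\X_n$) and $\wt{\xx}_n\kexp,\wt{\yy}_n\kexp$ for those of the permuted instance (where agent $n$ carries $\X_{\sigma(n)}$), the latter launched from the permuted starting point $\wt{\yy}_n^{(0)}=\yy_{\sigma(n)}^{(0)}$. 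The goal is then to show, by a double induction over the inner APM loop $k$ and the outer cut loop $s$, that the operator-side data coincide.

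For the inner loop I would argue by induction on $k$ that $\wt{\xx}_n\kexp=\xx_{\sigma(n)}\kexp$ and $\wt{\yy}_n\kexp=\yy_{\sigma(n)}\kexp$ for every $n$, together with $\wt{\nnu}\kexp=\nnu\kexp$. The base case is the choice of starting point. For the step, Line~\ref{algline:NIAPM-compute-x} gives $\wt{\xx}_n\kexp=P_{\X_{\sigma(n)}}(\wt{\yy}_n^{(k-1)})=P_{\X_{\sigma(n)}}(\yy_{\sigma(n)}^{(k-1)})=\xx_{\sigma(n)}\kexp$, so the $\xx$-blocks of the permuted run are a relabelling of those of the original run; since $\sigma$ is a bijection, the aggregate is unchanged:
\begin{equation*}
\wt{\SS}\kexp=\sum_{n\in\Nall}\wt{\xx}_n\kexp=\sum_{n\in\Nall}\xx_{\sigma(n)}\kexp=\sum_{m\in\Nall}\xx_m\kexp=\SS\kexp .
\end{equation*}
Hence $\wt{\nnu}\kexp=\tfrac1N(\pp-\wt{\SS}\kexp)=\tfrac1N(\pp-\SS\kexp)=\nnu\kexp$ (Line~\ref{algline:NIAPM-opsendsnu}), and then $\wt{\yy}_n\kexp=\wt{\xx}_n\kexp+\wt{\nnu}\kexp=\xx_{\sigma(n)}\kexp+\nnu\kexp=\yy_{\sigma(n)}\kexp$ by Line~\ref{algline:NIAPM-compute-y} and \eqref{eq:explicitProjY}, closing the induction. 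In particular the entire series $(\SS\kexp)_k$ seen by the operator and $(\nnu\kexp)_k$ sent back to the agents agree in both instances. Because the stopping test on Line~\ref{algline:NIAPM-conv-APM} and the disaggregation test on Line~\ref{algline:NIAPM-conv-disag} use a norm depending only on the multiset of agent-blocks (true for the Euclidean and $\ell_1$ norms used here), the two runs stop at the same iteration $K$ and return the same verdict.

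It then remains to check that the cut data are invariant and to propagate this through the outer loop. The set $\Tcut$ on Line~\ref{algline:NIAPM-defH0asymConfid} depends only on $\nnu\kexp$, hence is unchanged; and the scalar $\Arhs_{\Tcut}=\sum_{t\in\Tcut}\sum_{n\in\Nall}x_{n,t}\kexp$ is again an aggregate over agents, so $\wt{\Arhs}_{\Tcut}=\Arhs_{\Tcut}$ by the same relabelling. Thus $\mathbb{I}^{\text{op}}$ and the pair $(\Tcut\ids,\Arhs_{\Tcut}\ids)$ produced at outer iteration $s$ coincide. I would then run the outer induction on $s$: the aggregate feasible set $\P^{(0)}=\P$ and the cost $f$ are data on $\rr^T$ that do not reference agent labels, hence are identical in both instances (and one notes that the necessary conditions \eqref{eq:agg-cond} are themselves permutation-invariant, since $\sum_n E_n$, $\sum_n\llbx_n$, $\sum_n\uubx_n$ are). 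If $\P\ids$ and $\pp\ids$ coincide, then NI-APM returns the same cut by the above, so the update on Line~\ref{algline:Optim:addConstraint} yields the same $\P^{(s+1)}$ and hence the same $\pp^{(s+1)}$; this shows the whole operator-visible transcript and the sequence of cuts are identical.

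The main (and essentially only) obstacle is to make the correspondence \emph{exact} rather than merely distributional: one must align the initializations via $\wt{\yy}_n^{(0)}=\yy_{\sigma(n)}^{(0)}$ (automatic if the common starting point is chosen in a label-independent way, e.g.\ $\yy^{(0)}=0$), and use that the convergence norm is invariant under permutation of agent-blocks so that the inner-loop length is identical. Everything else reduces to the single observation that a finite sum is invariant under reindexing. I would stress that the internal randomness of the SMC summation (\Cref{algo:SMCsum}) plays no role here, since the proposition concerns only its \emph{deterministic output} $\SS$, the inter-agent leakage being explicitly set aside in this ideal model.
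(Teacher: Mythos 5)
Your proof is correct and takes essentially the same approach as the paper's: the paper's argument is precisely that every quantity transmitted to the operator (the aggregates $\SS^{(k)}$ and the sums $A_{\Tcut}=\sum_{n\in\Nall,t\in\Tcut}x_{n,t}^{(K)}$) is a sum over agents and hence invariant under relabelling. The paper states this invariance in two sentences, taking the block-relabelling correspondence $\wt{\xx}_n^{(k)}=\xx_{\sigma(n)}^{(k)}$ for granted, whereas your double induction makes that correspondence explicit, along with the (implicitly assumed) alignment of initial points and permutation-invariance of the stopping norm.
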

\begin{proof}
  The only information transmitted by the agents
  to the central operator consists of  the aggregate profiles $\SS^{(k)} = $ SMCA$(\xx^{(k)})$  and  the sums $A_{\Tcut}$
  (line~5 and line~17 of~\Cref{algo:vonNeumannProjasymConfid}),
  for different values of the subset $\Tcut$, corresponding
  to different Hoffman cuts.
  Each aggregate profile $\SS^{(k)}$ is invariant by permutation of the agents. 
  Each sum $A_{\Tcut}$ is of the form $A_{\Tcut}=\sum_{n\in \Nall,t\in \Tcut}x_{nt}^{(K)}$, where
  $K$ is the iteration at which the violated inequality is found. Hence
  it is also invariant by a permutation of the agents.
\end{proof}
To illustrate \Cref{prop-safe}, suppose Alice and Bob
live in a small town, in which there is a sports Hall,
with two kind of lessons every day,
boxing, from 12h00 to 13h00, and dancing, from 18h00 to 19h00.
Suppose Alice goes to the boxing lesson, and that Bob goes to the dancing
lesson. Suppose further that Alice and Bob do not consume
any electric energy during their lessons. Then, by analyzing the successive aggregates $A_{\Tcut}$, for different values of $\Tcut\subset\Tall$,
the central operator may deduce that the global consumption
decreases at the time of the lessons, implying that
one agent boxes whereas another one dances.
However, whether it is Alice or Bob who does boxing or dancing
remains inaccessible to the operator, owing to the symmetry argument.
This protection persists even with a malicious operator. This privacy property is stated formally as follows:
\begin{corollary}\label{cor:safeOperator}
With $N>1$, a malicious (or semi-honest) operator using \Cref{algo:confidOptimDisag} cannot infer the constraints $\X_n$ or profile $\xx_n$ of a particular agent $n \in \Nall$ with probability 1.
  \end{corollary}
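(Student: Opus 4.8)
The plan is to deduce the corollary directly from the permutation invariance of \Cref{prop-safe}: permuting the agents yields an instance that the operator cannot distinguish from the original, yet that generically attributes different private data to agent $n$, so the operator can never be certain of $(\X_n,\xx_n)$.

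Concretely, I would fix an agent $n$ and, using $N>1$, choose some $m\neq n$ together with the transposition $\sigma=(n\,m)$ that swaps $n$ and $m$ and fixes every other agent. By \Cref{prop-safe}, the original instance and the one obtained by relabeling agents through $\sigma$ produce exactly the same operator information $\mathbb{I}^{\text{op}}$, the same broadcast multipliers $(\nnu^{(k)})_k$, and the same sequence of cuts. Hence the two instances are interchangeable from the operator's viewpoint. I would also note that this interchangeability survives a \emph{malicious} operator: whatever (possibly protocol-violating) vectors the operator broadcasts, they are common to all agents, and the only feedback it ever receives are the agent-symmetric aggregates $\SS^{(k)}=\sum_{n}P_{\X_n}(\cdot)$ and $A_{\Tcut}=\sum_{n,\,t\in\Tcut}x_{nt}$; reindexing these sums shows they are invariant under $\sigma$ regardless of the broadcast values.

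To turn this indistinguishability into the ``probability $1$'' statement, I would place an exchangeable prior on the family $(\X_n,\xx_n)_{n\in\Nall}$. The invariance of \Cref{prop-safe} then passes to the posterior: conditioned on everything the operator observes, the law of the private data is invariant under relabeling, so the posterior puts equal mass on $(\X_n,\xx_n)$ and on $(\X_m,\xx_m)$ for each $m$. As soon as the agents are not all identical, at least two of these candidates differ, and no estimator can output the true $(\X_n,\xx_n)$ with posterior probability $1$; the success probability is bounded by the fraction of agents sharing $n$'s data, which is strictly below $1$.

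The main obstacle is not the symmetry itself, which is immediate from \Cref{prop-safe}, but giving ``with probability $1$'' a precise meaning: one must commit to a probabilistic model (the exchangeable prior above) under which the operator forms its beliefs, and separate out the degenerate case where all agents carry identical constraints and profiles. In that case the permutation acts trivially and the operator does ``know'' every agent's data, but this reflects the absence of any distinguishing private information rather than a genuine leak, so the statement should be read as excluding that case.
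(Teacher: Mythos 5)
Your proposal matches the paper's own route: the corollary is presented there as an immediate consequence of the permutation invariance in \Cref{prop-safe} (the paper gives no separate proof, simply asserting that the protection ``persists even with a malicious operator'' by the same symmetry argument you spell out for the transposition swapping $n$ and $m$). Your Bayesian formalization of ``with probability $1$'' via an exchangeable prior, and your caveat about the degenerate instance in which all agents carry identical data (where the symmetry argument is vacuous and the statement must be read as excluding that case), are refinements the paper leaves implicit rather than departures from its approach.
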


  \begin{remark} \label{rm:info-leak}
    Even if the identity of users is fully protected,  in some specific cases the central operator could infer limited information about  constraints or profile that an agent within the population may bear.
    Indeed, let us consider an illustrative example with  $T=2$, $N=2$, and constraints parameters $(E_1,\ubx_{1,1},\ubx_{1,2}) \eqd (1,2,0)$, $(E_2,\ubx_{2,1},\ubx_{2,2}) \eqd (3,1,2)$  and $\llbx \eqd 0$ (all unknown to the operator).
    The operator knows \textit{a priori} the aggregate quantities  $E_1+E_2=4, \  \uubx_1+\uubx_2=(3,2),\  \llbx_1+\llbx_2=(0,0)$.

    Before the algorithm, from the operator viewpoint, any parameter values satisfying the aggregate conditions above are possible.
   For instance, it is possible that agent 1 has the parameters $(E^*_1,\ubx^*_{1,1},\ubx^*_{1,2},\lbx^*_{1,1},\lbx^*_{1,2}) \eqd (3,2,2,0,0)$.
   
   Now suppose that the first aggregate profile proposed by the operator is $\pp^{(0)}\eqd (3,1)$. 
   From \Cref{algo:confidOptimDisag}, he obtains the cut defined by $\T^{(0)}=\{1\}$ and $A_{\{1\}}^{(0)}=2$.
   Thus, assuming the operator knows that $A_{\{1\}}^{(0)}$ is of the form, $A_{\{1\}}^{(0)}= \min_{\N \subset \Nall}\{ \sum_{n\in\Ncut} E_n + \sum_{t \in \Tcut, n \notin \Ncut} \ubx\nt  - \sum_{t \notin \Tcut, n \in \Ncut} \lbx\nt  \}$ for a subset $\N$, he infers the following conditions on the parameters:
   \begin{align} \label{eq:cond-op-privacy}
     \begin{array}{l}
       E_1+ \ubx_{2,1}- \lbx_{1,2}= 2 \\
       \text{and} \        E_2+ \ubx_{1,1}- \lbx_{2,2}\geq  2
     \end{array}
     \text{ OR }
          \begin{array}{l}
       E_2+ \ubx_{1,1}- \lbx_{2,2}= 2 \\
       \text{and} \ E_1+ \ubx_{2,1}- \lbx_{1,2}\geq 2 \ .
     \end{array}
     \end{align}
     With these conditions, the parameters values $(E^*_1,\ubx^*_{1,1},\ubx^*_{1,2},\lbx^*_{1,1},\lbx^*_{1,2})$ are no longer possible for agent $1$ (neither for agent 2 as the information the operator obtains is necessarily symmetric).
     Indeed, from the initial aggregate conditions, this implies that $ (E^*_2,\ubx^*_{2,1},\ubx^*_{2,2},\lbx^*_{2,1},\lbx^*_{2,2}) \eqd (1,1,0,0,0)$, which  is incompatible with \eqref{eq:cond-op-privacy}.
   \end{remark}
   
   It seems difficult to have an algorithm where the operator will not learn any information on the distribution of parameters.
   The kind of information leak illustrated in \Cref{rm:info-leak}  is also  observed in other privacy-preserving algorithms. One such example is the privacy-preserving consensus method to compute an average value of numbers secretly owned by agents of  \cite{ruan2019secure}.
   Even if the secret number of a particular agent cannot be learned by another agent (see \cite[Th.3]{ruan2019secure}),  at the end of the procedure, each agent has obtained new information on the joint distribution of initial numbers of other agents, as each agent obtains the average value Avg[0] of those numbers.

Fortunately, in most cases, an operator using \Cref{algo:confidOptimDisag} will only learn very limited information on the distribution of  constraints and profiles of users: for this, 
it is instructive to estimate the information leak caused by the
algorithm by comparing the information sent to the central
operator with the one needed to encode the instance.
Suppose, for simplicity, that all the numbers manipulated
by the algorithm (in particular
the bounds $\underline{x}_{n,t}$ and $\bar{x}_{n,t}$
are encoded by fixed (say double) precision numbers.
Then, the total number of bits needed to encode
the instance is $O(NT)$. However, it follows from~\Cref{prop:correctnessAlgo}
that the number of bits received by a malicious central operator
is $O(2^T)$, hence, in the regime $N\gg 2^T$ (large number
of agents), the information
leak is asymptotically negligible. 
Moreover, we give in~\Cref{subsec:appli-microgrid}
a real example with $N=2^8$ agents, $T=24$,
in which the algorithm terminates after 194 iterations.
Here, the information leak consists of 194 doubles,
to be compared with the encoding size of the instance, $NT+N=12544$ doubles.

\paragraph{Semi-honest agents}
Let us now consider agents that
are only semi-honest, meaning that they still obey the protocol,
but wish to exploit the data received in the algorithm to infer information about the
other users.

A semi-honest agent (or an external eavesdropper) aiming to learn the profile $\xx_n$ of agent $n$ has to intercept all the communications between $n$ and all $N-1$ other agents (to learn $(s_{n,t,m})_{m\neq n}$ and $(s_{m,t,n})_{m\neq n}$) and to  the operator (to learn $\sigma_n$) during the SMC protocol in order to succeed (otherwise, this semi-honest agent has just access to random numbers).
Besides, if we think of collusion of semi-honest agents aiming to learn the profile of a specific agent, we can observe that the collusion must involve $N-1$ agents (all except one) to succeed in learning anything. 

Indeed, in the SMC summation protocol (\Cref{algo:SMCsum},
the messages
$s_{n,t,j}$ sent by agent $n$ to agent $j$, encoding 
shares of the consumption $x_{nt}$ of agent $n$ at different times, are all uniformly distributed
random variables, because the sum of independent uniform random variables
modulo $R$ is still uniform. Similarly, the messages $\sigma_{n,t}$
sent by agent $n$ to the central operator are uniformly distributed random variables.
Furthermore, there
are no correlations between the random variables
received by an agent at different iterations. This entails
that, even if some agents keep the same consumption
profile over time, a semi-honest agent cannot learn the profiles
of the other agents by comparing the information received
during different iterations of the SMC protocol.

We refer the reader to
\cite[Protocol I]{atallah2004private}, where a finer variant of SMC (secure split protocol), splitting secret numbers only among $k<n$ randomly chosen players, is analyzed. %

Alternative SMC methods to the proposed protocol \Cref{algo:SMCsum} exist 
to compute the sum $\sum_{n\in \Nall} y_n$,
while keeping $y_n$ secret to user $n$, for instance \cite{clifton2002tools}.
Other techniques could also be considered such as consensus-based aggregation algorithms \cite{heCai2019consensus}. %

From the above paragraph, one deduces that a collusion of less than $(N-1)$ semi-honest agents cannot obtain more information than the operator from an execution of \Cref{algo:confidOptimDisag}: indeed, the only additional information that the collusion obtains is composed of the sequences $\{\nnu^{(k)}\}$ obtained from each execution of \Cref{algo:vonNeumannProjasymConfid}.
Thus,  the symmetry argument given in \Cref{prop-safe} applies to give a privacy guarantee similar to \Cref{cor:safeOperator}:
\begin{corollary}\label{cor:safeAgents}
Using \Cref{algo:confidOptimDisag} with $N>1$, a collusion of less than $(N-1)$ malicious (or semi-honest) agents cannot infer the constraints $\X_n$ or profile $\xx_n$ of a particular agent $n $ (which is not in the collusion)  with probability 1.
  \end{corollary}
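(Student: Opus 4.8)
The plan is to follow the route sketched in the paragraph preceding the statement: first reduce the total information available to the colluding agents to quantities that are invariant under a relabelling of the agents, and then invoke the symmetry argument of \Cref{prop-safe} exactly as in the proof of \Cref{cor:safeOperator}. Fix a coalition $C\subset\Nall$ of semi-honest (or malicious) agents with $|C|\le N-2$ and a target agent $n\notin C$. First I would enumerate everything the coalition can observe during a run of \Cref{algo:confidOptimDisag}: the shares exchanged inside the SMC summation (\Cref{algo:SMCsum}) between its members and the other agents, the broadcast increments $\nnu^{(k)}=\tfrac1N(\pp-\SS^{(k)})$ produced at \Cref{algline:NIAPM-opsendsnu} of \Cref{algo:vonNeumannProjasymConfid}, together with all the public data already available to the operator, namely $\mathbb{I}^{\text{op}}$.

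The key reduction is to argue that the SMC shares carry no information about the non-colluding agents. This is precisely the content of the \textbf{Semi-honest agents} discussion above: because each secret $\x\nt$ is masked by a modular one-time pad, any collection of at most $N-1$ of its $N$ shares is jointly uniform on $[0,R]$ and independent of $\x\nt$. Since $|C|\le N-2<N-1$, the coalition sees at most $N-2$ shares of each secret of each agent outside $C$, so what it receives through the SMC protocol is, conditionally on the instance, a family of independent uniform variables whose law does not depend on the true profiles. Consequently the only information the coalition holds beyond $\mathbb{I}^{\text{op}}$ is the sequence $\{\nnu^{(k)}\}$; and since $\nnu^{(k)}$ is a function of the aggregate $\SS^{(k)}=\sum_{m\in\Nall}\xx_m^{(k)}$ and of $\pp$, it is, like $\mathbb{I}^{\text{op}}$, invariant under any permutation of the agents.

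To finish I would exhibit an explicit relabelling that the coalition cannot detect. Because $|C|\le N-2$ and $n\notin C$, the complement $\Nall\setminus(C\cup\{n\})$ is nonempty; pick $m$ in it and let $\sigma$ be the transposition swapping $n$ and $m$ while fixing every other agent, in particular every member of $C$. Comparing the original instance with the one where $\X_n$ and $\X_m$ are exchanged, the members of $C$ keep their own constraints and random coins unchanged, the shares they receive remain identically (uniformly) distributed by the reduction above, while $\mathbb{I}^{\text{op}}$ and the broadcast $\{\nnu^{(k)}\}$ are literally unchanged by permutation invariance. Hence the coalition's entire view has the same distribution in both instances, so no decision rule based on this view can attribute $\X_n$ or $\xx_n$ to agent $n$ rather than to agent $m$ with certainty; this yields the claimed impossibility of inference with probability $1$, exactly as in \Cref{cor:safeOperator}. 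The main obstacle is the reduction step: one must check carefully that a coalition of size strictly less than $N-1$ never captures all $N$ shares of any secret of an outside agent---equivalently, that at least one share always reaches an honest recipient---so that the masking property genuinely applies and the SMC transcript can be discarded from the symmetry argument.
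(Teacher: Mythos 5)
Your proposal is correct and takes essentially the same route as the paper: it discards the SMC transcript using the uniformity of any sub-maximal collection of shares, reduces the coalition's extra knowledge to the sequences $\{\nnu^{(k)}\}$ (which, like $\mathbb{I}^{\text{op}}$, are permutation-invariant), and then invokes the symmetry argument of \Cref{prop-safe} as in \Cref{cor:safeOperator}. Your explicit transposition swapping the target with another non-colluding agent (possible precisely because $|C|\leq N-2$) is just a careful spelling-out of the step the paper leaves implicit.
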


\paragraph{Malicious agents and Robustness}
The proposed procedure  is not robust against malicious agents,
i.e., agents lying about their consumption to jeopardize the system.
Indeed, if agents lie about their consumptions or feasibility constraints, \Cref{algo:confidOptimDisag} will not converge to the global optimum.
However, we still have the privacy guarantee given in \Cref{cor:safeOperator}:  malicious agents lying to the operator in order to learn information about other agents  can only learn \emph{global} information, and will not be able to learn \emph{individual} information (profile or constraints) about a specific agent.

A more detailed privacy analysis of our method and of its possible refinements
can be an avenue for further work.

In the next section, we focus on the convergence rate of APM in the particular case of transportation constraints,
and give an explicit bound on the geometric rate stated in \Cref{thm:cvgAP}.

\subsection{Complexity Analysis of APM in the Transportation Case}
\label{subsec:complexityAnalysis}
In this section we analyze the speed of convergence of the alternate projections method (APM) described in \Cref{algo:vonNeumannProj} on the sets $\X$ and $\Y_{\pp}$ defined in \Cref{sec:resourceallocation-disag}.

A general result in \cite{borwein2014analysis} gives an upper bound of the sequences generated by APM on 
$\X$ and $\Y$ if these two sets are semi-algebraic. 
In particular,  it establishes the geometric convergence for polyhedral sets. 
However, as stated in \cite{nishihara2014convergence}, given two particular polyhedral sets $\X$ and $\Y$, it is not straightforward to deduce an explicit rate of  convergence from their result.

The authors in \cite{nishihara2014convergence} established in a particular case a geometric convergence with an explicit upper bound on the convergence rate. 
They consider APM on two sets $P$ and $Q$, where $P$ is a linear subspace and $Q$ is a product of base polytopes of submodular functions.

In this section, we also establish an explicit upper bound on the convergence rate of APM in the transportation case, that is with $\X$ and $\Y_{\pp}$ defined in \eqref{eq:XnTransport} and \eqref{eq:defY}:
\begin{theorem} \label{prop:boundCvgRate} 
For the two sets $\X$ and $\Y_{\pp}$, the sequence of alternate projections converges  to $\xE^*\in \X$, $\xP^*\in \X^P$ satisfying $\norm{\xE^*-\xP^*}=\inf_{\xx\in\X,\yy\in\Y_{\pp}} \norm{\xx-\yy}$, at the geometric rate:
\begin{equation*}
\norm{ \xE^{(k)}-\xE^*} \leq 2 \norm{ \xE^{(0)}-\xE^*} \times \left(1- \tfrac{4}{N(T+1)^2(T-1)} \right)^k ,
\end{equation*}
and the analogous inequalities hold for $(\xP\kexp)_k$.
\end{theorem}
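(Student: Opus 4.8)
The plan is to reduce the analysis to a pair of affine subspaces and then to estimate, via the bipartite structure of the transportation constraints, the Friedrichs angle between them. First I would note that $\Y_{\pp}$ is an affine space with direction $\Y_0=\{\xx\in\rr^{NT}\mid \sum_{n} x_{n,t}=0\ \forall t\in\Tall\}$, and that along the iteration the family of box constraints of $\X$ that are active at the limit $\xE^*$ eventually stabilises. Hence, past a finite rank, the iterates evolve as alternate projections between $\Y_{\pp}$ and the affine hull $\mathcal{F}$ of the limiting face of $\X$, whose direction $\mathcal{F}_0$ is cut out only by the demand equalities $\sum_{t} x_{n,t}=0$ and by the coordinates frozen at a bound. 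The rate of APM between two subspaces is $\cos^2\theta$, with $\theta$ the Friedrichs angle, so the whole problem is to lower bound $\sin^2\theta$. Equivalently, and this is the route that yields the uniform-in-$k$ statement with the leading factor $2$, I would establish bounded linear regularity of the pair $\{\X,\Y_{\pp}\}$ with an explicit modulus $\kappa$ and invoke the standard conversion of regularity into a geometric rate $1-1/\kappa^2$; here the target is $\kappa^2=N(T+1)^2(T-1)/4$, the factor $2$ arising from a telescoping of nonexpansive steps, and the bound for $(\xP^{(k)})_k$ following by the symmetric argument.

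The core is the explicit estimate of $\sin^2\theta$, equivalently an upper bound on the cosine of the angle between $\Y_0$ and $\mathcal{F}_0$. I would express this cosine through the smallest nonzero singular value of the matrix coupling the two families of linear constraints: the column-sum constraints defining $\Y_0$ and the row-sum (demand) constraints defining $\mathcal{F}_0$. Since $\X=\prod_n\X_n$ is a product over agents, the only inter-agent coupling passes through the column sums, so this matrix is governed by the incidence structure of the complete bipartite graph on $\Nall\sqcup\Tall$ underlying \Cref{prop:flowProblem}. This is where spectral graph theory enters: the relevant singular values are controlled by the nonzero Laplacian eigenvalues of that bipartite graph, whose spectral gap scales like $\min(N,T)$, which explains the $1/N$ dependence of the final rate. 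The remaining factor $(T+1)^2(T-1)$ should come from the geometry of the single-agent transportation slices, namely from bounding the entries of their vertices (the spanning-tree bases of the bipartite graph, with $N+T-1$ edges) together with the number $T-1$ of independent column constraints. Assembling these pieces would give $\sin^2\theta\geq \tfrac{4}{N(T+1)^2(T-1)}$, hence the announced rate $1-\tfrac{4}{N(T+1)^2(T-1)}$.

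The delicate point, and the main obstacle, is exactly this second step: turning the qualitative fact that the angle is positive (which is all that \Cref{prop:geometricCvgGeneral} of \cite{nishihara2014convergence,borwein2014analysis} gives) into the explicit constant. The Friedrichs angle must be bounded uniformly over \emph{all} faces $\mathcal{F}$ that can occur as limiting faces of $\X$ and over all admissible right-hand sides, and the worst-case face is not transparent a priori. I expect the crux to be an explicit Hoffman/condition-number estimate for the combined row–column constraint system, showing that the unimodular bipartite incidence structure keeps the smallest singular value bounded away from zero with precisely the advertised polynomial dependence. The careful bookkeeping that produces $(T+1)^2(T-1)$ rather than a cruder polynomial—and in particular keeping the constant reasonably tight—is where the integrality and spanning-tree structure of transportation polytopes are indispensable, and is the part of the proof demanding the most care.
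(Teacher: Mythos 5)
You have reconstructed the correct high-level skeleton---reduce to Friedrichs angles between the subspaces supporting faces of $\X$ and of $\Y_{\pp}$ (this is exactly \Cref{lem:nishiharaFriedrich}), then bound the angle by a singular-value/spectral estimate---but the proposal stops precisely where the theorem's content lies, and the heuristics you offer for the missing step point at the wrong mechanism. The paper's argument is: orthonormalize the column-sum constraints as $A=\sqrt{N}^{-1}J_{1,N}\otimes I_T$ and the constraints of a face of $\X$ (demand rows plus frozen coordinates) as $B$, apply \Cref{lem:nishiharaSpectral}, and observe that $P=I_T-(AB^\tr)(BA^\tr)$ is the Laplacian of a \emph{weighted graph on the $T$ time periods} (a one-mode projection), with edge weights $\tfrac{1}{N}\sum_n \dsone_{\{k,\ell\}\subset\T_n^c}/(T-|\T_n|)$. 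The factor $1/N$ in the final rate is \emph{not} the spectral gap of the complete bipartite graph on $\Nall\sqcup\Tall$ scaling like $\min(N,T)$: that reasoning is inconsistent, since a spectral gap growing with $N$ would make the rate \emph{better}, whereas the theorem's rate degrades linearly in $N$. The true source of the $1/N$ is a worst-case connectivity argument: along a shortest path in the weighted graph, each edge may be supported by a \emph{single} agent, so its weight is only bounded below by $\tfrac{1}{N(T-d+1)}$. Likewise, $(T+1)^2(T-1)$ does not come from vertex entries or spanning-tree bases of transportation polytopes; it comes from the Laplacian eigenvalue bound $\lambda_1=\min_{u\perp \mathbf{1},u\neq 0} u^\tr P u/u^\tr u$, estimated via Cauchy--Schwarz along a shortest path between the coordinates where an eigenvector attains its extrema: maximizing $(T-d+1)d$ over path lengths $d$ yields the $(T+1)^2/4$, and the spread bound $u_{t^*}-u_{s^*}\geq \tfrac{T}{(T-1)\sqrt{T}}\norm{u}_2$ yields the $(T-1)$.

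Two further points. First, your opening reduction (``the active box constraints eventually stabilise, so past a finite rank the iterates do APM between $\Y_{\pp}$ and the limiting face'') only gives an asymptotic rate; to get the uniform-in-$k$ statement you then appeal to bounded linear regularity with modulus $\kappa^2=N(T+1)^2(T-1)/4$, but that modulus is exactly the quantity to be proved, so nothing has been gained---the paper avoids this circularity by taking the maximum of $c_F$ over \emph{all} faces in \Cref{lem:nishiharaFriedrich}, which you do eventually acknowledge is needed. Second, you candidly flag that converting positivity of the angle into the explicit constant is ``the main obstacle'' and ``demands the most care''; since that conversion is the entire substance of \Cref{prop:boundCvgRate} (everything else is quoted from \cite{nishihara2014convergence}), the proposal as written has a genuine gap: it names the target inequality $\sin^2\theta\geq \tfrac{4}{N(T+1)^2(T-1)}$ but supplies neither the correct intermediate object (the weighted Laplacian on $\Tall$) nor the path argument that proves it.
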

 For the remaining of this section, we will just use $\Y$ to denote $\Y_{\pp}$, as $\pp$ remains fixed during   APM. 
 For the result stated in \Cref{prop:boundCvgRate} above, we use several
 lemmas from \cite{nishihara2014convergence}. 

\begin{proof}
First, we use the fact stated in \cite{nishihara2014convergence} that APM on subspaces $U$ and $V$ converge with geometric rate $c_F(U,V)^2$, where the rate is given by the  square of the cosine of the Friedrichs angle  between $U$ and $V$, given by:
\begin{equation*}
c_F(U,V)= \sup \{u^T v \ | \ u \in U \cap (U \cap V)^\perp, v \in V \cap (U \cap V)^\perp, \norm{u} \leq 1, \norm{v} \leq 1 \}.
\end{equation*}
An intuitive generalization of this result for polyhedra $\X$ and $\Y$, considering all affine subspaces supporting the faces of $\X$ and $\Y$ is given in \cite{nishihara2014convergence}:
\begin{lemma}[\cite{nishihara2014convergence}]
\label{lem:nishiharaFriedrich}
For APM on polyhedra $\X$ and $\Y$ in $\rr^D$, the convergence is geometric with rate bounded by the square of the maximal  cosine of Friedrichs angle between subspaces supporting faces of $\X$ and $\Y$:
\begin{equation} \label{eq:maxFangle}
\max_{\xx,\yy} c_F\big( \aff_0(\X_{\xx}), \aff_0(\Y_{\yy}) \big),
\end{equation}
where, for any $\xx \in \rr^D$,  $\X_{\xx} \eqd \arg \max_{\bm{v} \in \X} \xx^\tr  \bm{v}$ is the face of $\X$ generated by direction $\xx$ and $\aff_0(C)=\aff(C)-\cc$ for some $\cc \in C$  denotes the  subspace supporting the affine hull of $C$, for $C=\X_{\xx}$ or $C=\Y_{\yy}$.
\end{lemma}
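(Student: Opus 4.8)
The plan is to reduce the polyhedral case to the linear-subspace estimate recalled just above---APM on two subspaces $U,V$ contracts at rate $c_F(U,V)^2$---by a finite face-identification argument. Two preliminary facts drive the reduction. First, the APM rate between two \emph{affine} subspaces depends only on their direction spaces: since $P_{U+a}(\cdot)=P_{U}(\cdot-a)+a$, the error sequences of APM on affine subspaces obey the same recursion as in the linear case, so APM on $\aff(\X_{\xx})$ and $\aff(\Y_{\yy})$ contracts at rate $c_F\big(\aff_0(\X_{\xx}),\aff_0(\Y_{\yy})\big)^2$. Second, the metric projection onto a polyhedron is piecewise affine: $\rr^D$ partitions into finitely many polyhedral regions indexed by the faces $F$ of $\X$, and on the region attached to $F$ one has $P_{\X}=P_{\aff(F)}$, the orthogonal projection onto the supporting affine hull. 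The region of $F$ is $\operatorname{relint}(F)+N_{\X}(F)$, where $N_{\X}(F)$ denotes the normal cone of $\X$ along $F$.

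First I would apply \Cref{thm:cvgAP}: since $\X$ is bounded, the two sequences converge, say $\xE^{(k)}\to\xE^*$ and $\xP^{(k)}\to\xP^*$. Every point of a polyhedron lies in the relative interior of a unique face, so let $F_{\X}$ be the face of $\X$ with $\xE^*\in\operatorname{relint}(F_{\X})$ and $F_{\Y}$ the face of $\Y$ with $\xP^*\in\operatorname{relint}(F_{\Y})$; for polyhedra these are exposed, hence of the form $\X_{\xx}$ and $\Y_{\yy}$ for suitable directions. Because $\xE^*=P_{\X}(\xP^*)$, the vector $\xP^*-\xE^*$ belongs to $N_{\X}(F_{\X})$, placing $\xP^*$ in the region attached to $F_{\X}$; assuming for now that $\xP^*$ falls in the \emph{interior} of this region, continuity of $P_{\X}$ gives $\xP^{(k)}$ in the same region for all large $k$, whence $\xE^{(k+1)}=P_{\X}(\xP^{(k)})=P_{\aff(F_{\X})}(\xP^{(k)})$. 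The symmetric argument gives $\xP^{(k)}=P_{\aff(F_{\Y})}(\xE^{(k)})$ for large $k$. Thus, past some index $k_0$, the polyhedral APM iterates coincide \emph{exactly} with the subspace APM iterates on $\aff(F_{\X})$ and $\aff(F_{\Y})$, so the tail contracts at rate $c_F\big(\aff_0(F_{\X}),\aff_0(F_{\Y})\big)^2\leq\big(\max_{\xx,\yy}c_F(\aff_0(\X_{\xx}),\aff_0(\Y_{\yy}))\big)^2$; absorbing the finite initial segment $k<k_0$ into the multiplicative constant leaves the geometric rate unchanged, giving \eqref{eq:maxFangle}.

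The main obstacle is the face-identification step, which I glossed above by assuming $\xP^{(k)}$ enters the \emph{interior} of the region of $F_{\X}$. This is clean precisely when $\xP^*-\xE^*$ lies in the relative interior of $N_{\X}(F_{\X})$; in the degenerate case where this difference sits on the boundary of the cone, the active faces may oscillate among several faces all containing $\xE^*$ in their closure, and finite identification can fail. The reason the bound \eqref{eq:maxFangle} nevertheless survives is that it is a \emph{maximum} over all face pairs: whichever pair of faces is active at a given late stage, its supporting subspaces make an angle no smaller than the worst case, so the local contraction never exceeds $\big(\max_{\xx,\yy}c_F(\aff_0(\X_{\xx}),\aff_0(\Y_{\yy}))\big)^2$. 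Making this oscillation argument fully rigorous---rather than relying on the non-degenerate identification---is the delicate part, and is the reason \cite{nishihara2014convergence} presents the statement as an \emph{intuitive} generalization of the subspace case.
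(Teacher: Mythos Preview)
The paper does not supply its own proof of this lemma: it is stated as a quotation from \cite{nishihara2014convergence}, introduced as ``an intuitive generalization'' of the subspace case, and then used as a black box in the proof of \Cref{prop:boundCvgRate}. So there is no in-paper argument to compare your proposal against.

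That said, your sketch is the natural one and matches the spirit of the cited source: reduce to the affine-subspace rate via eventual face identification, then bound uniformly by the worst pair of faces. You are right to flag the degenerate case where $\xP^*-\xE^*$ lies on the boundary of the normal cone $N_{\X}(F_{\X})$; in that situation finite identification of a \emph{single} face pair can genuinely fail, and your fallback---that whatever pair is active at step $k$ still contracts by at most the maximum in \eqref{eq:maxFangle}---is the correct salvage. To make that fallback rigorous you need one more ingredient you did not state: at step $k$, the projection $P_\X$ restricted to a neighborhood of $\xP^{(k)}$ coincides with $P_{\aff(F)}$ for \emph{some} face $F$ (possibly varying with $k$), and likewise for $P_\Y$; this follows from the piecewise-affine structure you invoked, since the iterate always lies in at least one of the finitely many closed regions. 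With that, one step of APM is one step of subspace APM for \emph{some} face pair, giving a per-step contraction factor bounded by the square of \eqref{eq:maxFangle}, and the product telescopes. Your last paragraph essentially says this; the only missing piece is the explicit observation that the closed regions cover $\rr^D$, so a face pair is always available even when the iterate sits on a region boundary.
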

In the remaining of the proof, we bound the quantity \eqref{eq:maxFangle} for our polyhedra $\X$ and $\Y$.

For this, we use the space $\rr^{NT}= \rr^T \times \dots \times \rr^T$, where the $(\n-1)T+1$ to $\n T$ entries correspond to the profile of agent $\n$, for $1\leq \n \leq N$. 
As in \cite{nishihara2014convergence}, we use a result connecting angles between subspaces and the eigenvalues of matrices giving the directions of these spaces:
\begin{lemma}[\cite{nishihara2014convergence}]
\label{lem:nishiharaSpectral}
If $A$ and $B$ are matrices with orthonormal rows with same number of columns, then:
\begin{itemize}
\item[] -- if all the singular values of $AB^\tr$ are equal to one, then $c_F\big( \Ker A, \ \Ker B \big)=0$;
\item[] -- otherwise,  $c_F\big( \Ker A, \ \Ker B \big)$ is equal to the largest singular value of $AB^\tr$ among those that are smaller than one.
\end{itemize} 
\end{lemma}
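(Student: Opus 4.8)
The plan is to reduce the lemma to the classical theory of principal angles between subspaces. Write $U:=\Ker A$ and $V:=\Ker B$. Since $A$ has orthonormal rows, $AA^\tr=I$, so the columns of $A^\tr$ form an orthonormal basis of $U^\perp=\operatorname{Row}(A)$; similarly the columns of $B^\tr$ form an orthonormal basis of $V^\perp$. Consequently $AB^\tr$ represents, in these two orthonormal bases, the orthogonal projection of $V^\perp$ onto $U^\perp$, so its singular values lie in $[0,1]$ and, by the variational definition of principal angles, they are exactly the cosines $\cos\theta_1,\dots,\cos\theta_r$ of the principal angles $0\leq\theta_1\leq\dots\leq\theta_r\leq\pi/2$ between $U^\perp$ and $V^\perp$ (with $r=\min(\operatorname{rank}A,\operatorname{rank}B)$). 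I would establish this first, directly from the SVD of $AB^\tr$.

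The core of the argument is the cosine--sine (CS) decomposition applied to the pair $(U^\perp,V^\perp)$. It provides an orthonormal basis of $\rr^D$ in which $\rr^D$ splits into one-dimensional blocks lying inside one of $U^\perp\cap V^\perp$, $U^\perp\cap V$, $U\cap V^\perp$, $U\cap V$, together with two-dimensional ``generic'' blocks, each a plane in which $U^\perp$ and $V^\perp$ meet at an angle $\theta_j\in(0,\pi/2)$. In these coordinates $AB^\tr$ becomes diagonal, with entries $1$ on the $U^\perp\cap V^\perp$ block, $\cos\theta_j\in(0,1)$ on the generic blocks, and $0$ elsewhere; so the singular value $1$ has multiplicity $\dim(U^\perp\cap V^\perp)$, and the singular values in $(0,1)$ are exactly the $\cos\theta_j$. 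Crucially, in the very same planes the complementary lines spanning $U$ and $V$ meet at the same angle $\theta_j$, so the generic blocks simultaneously produce all the \emph{nonzero} principal angles between $U$ and $V$.

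It then remains to read off $c_F$. By its definition the supremum defining $c_F(U,V)$ ranges over $(U\cap V)^\perp$, which discards precisely the common directions $U\cap V$ (the zero angles), while the directions contributing cosine $0$ (angles $\pi/2$) do not affect the supremum; hence $c_F(U,V)=\cos(\min_j\theta_j)$ is the largest of the $\cos\theta_j$, i.e.\ the largest singular value of $AB^\tr$ strictly below $1$. This handles the ``otherwise'' case. For the degenerate bullet, if every singular value of $AB^\tr$ equals $1$ then there are neither generic nor zero blocks, forcing $U^\perp\subseteq V^\perp$ and hence no nonzero principal angle between $U$ and $V$, so $c_F(U,V)=0$; and if some singular value is below $1$ but none lies strictly in $(0,1)$, then there are no generic blocks, so again $c_F(U,V)=0$, consistent with ``largest singular value $<1$'' being $0$.

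I expect the main obstacle to be the careful bookkeeping in the CS step: one must track the degenerate blocks (angles $0$ and $\pi/2$) and their multiplicities, since this is exactly what distinguishes the two cases of the lemma and what forces the answer to be the largest singular value \emph{strictly} below $1$ rather than the operator norm $\norm{AB^\tr}$. Verifying that the generic planes yield the same angle for $(U,V)$ and $(U^\perp,V^\perp)$, and that $U^\perp\cap(U^\perp\cap V^\perp)^\perp$ shares no direction with $V^\perp\cap(U^\perp\cap V^\perp)^\perp$ (so that the restricted supremum never reintroduces a cosine equal to $1$), are the two points requiring genuine care.
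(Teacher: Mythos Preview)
The paper does not prove this lemma; it is quoted verbatim from \cite{nishihara2014convergence} and used as a black box in the proof of \Cref{prop:boundCvgRate}. So there is no ``paper's own proof'' to compare against.

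Your argument is correct in substance and is a standard route to the result: identify the singular values of $AB^\tr$ with the cosines of the principal angles between $U^\perp$ and $V^\perp$, invoke the complement duality (via the CS decomposition) that the \emph{nonzero} principal angles between $(U,V)$ and $(U^\perp,V^\perp)$ coincide, and then read off $c_F(U,V)$ as the cosine of the smallest nonzero principal angle. One small inaccuracy: in the degenerate case where all singular values equal $1$ you conclude $U^\perp\subseteq V^\perp$, but the direction of the inclusion depends on which of $\operatorname{rank}A$, $\operatorname{rank}B$ is smaller; the correct statement is that the smaller of $U^\perp,V^\perp$ is contained in the larger (equivalently, one of $U,V$ contains the other), and in either case $c_F(U,V)=0$ for the reason you give. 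With that fix, your proof stands.
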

We are left with finding a matrix representation of the faces of polyhedra $\X$ and $\Y$ and, then, bounding the corresponding singular values.

In our case, the polyhedra $\Y$ is an affine subspace $\Y=\{ \xx \in \rr^{NT} \ | A\xx= \sqrt{N}^{-1} \pp \}$ where:
\setcounter{MaxMatrixCols}{20}
\begin{equation*}
A\eqDef \sqrt{N}^{-1} J_{1,N} \otimes I_T , 
\end{equation*}
where $\otimes$ denotes the Kronecker product. The matrix $A$ has orthonormal rows and the linear subspace associated to $\Y$ is equal to $\Ker (A)$.

Obtaining a matrix representation of the faces of $\X$ is more complex. 
The faces of $\X$ are obtained by considering, for each $\n\in \Nall$,  subsets of the time periods that are at lower or upper bound (respectively $\Tun $ and $\Ton $, with $\Tun  \cap \Ton = \emptyset $).  Considering a collection of such subsets, a face of $\X$ can be  written as:
\begin{equation*}
\mathcal{A}_{(\Ton,\Tun)_n}  \hh\hh\eqDef \hh \Big\{(\xx)\nt \ | \forall n, \ \txt\sum_t \x\nt = E_n \text{ and } \forall t \in \Tun, \x\nt =\lbx\nt,     \text{ and } \forall t \in \Ton, \x\nt =\ubx\nt \Big\}\ .
\end{equation*}
For some particular collection of subsets $(\Ton,\Tun)_n$, the set $\mathcal{A}_{(\Ton,\Tun)_n}$ might be empty. 
The linear subspace associated to $\mathcal{A}_{(\Ton,\Tun)_n}$ is given by $\{\xx \in \rr^{NT} | \ B\xx=0 \} = \Ker(B) $, where the $N$ first rows of $B$, corresponding to the constraints  $\sum_t \x\nt = E_n$, are given before orthonormalization by:
\begin{equation*}
\sqrt{T}^{-1} I_N \otimes J_{1,T} \ ,
\end{equation*}
and the matrix $B$ has $b\eqDef \sum_n |\T_n| $ more rows, where $\T_n \eqd \underline{\T_n} \cup \ol{\T_n}$, corresponding to the saturated bounds. Each of this row is given by the unit vector $\ee\nt^\tr \in \rr^{NT}$  for $\n\in \Nall$, $t \in  \T_n$, which gives already an orthonormalized family of (unit) vectors. Therefore, a simple orthonormalized matrix  $B\in \mc{M}_{ N+b, NT}(\rr)$ giving the direction of $\mathcal{A}_{(\Ton,\Tun)_n}$ is given by:
\begin{equation*} 
B \eqd 
\begin{pmatrix}
 \diag\left( \sqrt{T-|\T_1|}^{-1} {\oneT}_{\T_1^c}^\tr , \dots ,  \sqrt{T-|\T_N|}^{-1}{\oneT}_{\T_N^c}^\tr \right)   \ & | & \ 
\diag\left(  B_{\T_1} , \dots , B_{\T_N}  \right)
\end{pmatrix}^\tr
\ ,\end{equation*} 
where $\oneT_{\T_n^c} \in \rr^T$ is the vector  where the indices in $\T_n^c$ are equal to 1 and 0 otherwise, and $B_{\T_n} \eqd \sum_{\substack{ 1 \leq k \leq |\T_n| \\ \T_n= \{t_1,\dots, t_{|\T_n|} \} } } E_{k t_k}$ is the matrix $|\T_n| \times T$ with indices of $\T_n$. 
We obtain 
 the double product:
\begin{align*}
(A B^\tr) (B A^\tr) & = \frac{1}{N} \begin{pmatrix}
\sum_n \dfrac{\dsone_{ k \notin \T_n \wedge \ell \notin \T_n }}{ T-|\T_n| }
\end{pmatrix}_{ 1\leq k,\ell\leq T} + \frac{1}{N}\sum_n B_{\T_n}^\tr B_{\T_n} \\
& =\frac{1}{N} \begin{pmatrix}
\sum_n \dfrac{\dsone_{ \{k,\ell\} \subset \T_n^c }}{ T-|\T_n| }
\end{pmatrix}_{ 1\leq k,\ell\leq T} + \frac{1}{N} \sum_{1 \leq t \leq T} \left(\txt\sum_n \dsone_{t\in \T_n}\right) E_{t,t} \ .
\end{align*}
We observe that:
\begin{itemize}[wide,labelindent=0pt]
\item[] -- if $t_0 \in \bigcap_{i=1}^N \T_n $, then $\ee_{t_0}$ is an eigenvector associated to eigenvalue $\lambda_{t_0}=1$ ;
\item[] -- the vector $\oneT_{\barT}\eqd (\dsone_{t\notin \cap_n \T_n})_{t\in \Tall} \in \rr^T$, where $\barT \eqd \cup_n \T_n^c$, is an eigenvector associated to eigenvalue $\lambda=1$. Indeed, if we denote by $\N_\theta= \{n  \in \Nall| \theta \in  \T_n \}$, then $[\oneT_{\barT}]_\theta=1 \Leftrightarrow \N_\theta^c \neq \emptyset$, and for each $\theta\in\barT$:
\begin{align*}
[(A B^\tr) & (B A^\tr)]_\theta \oneT_{\barT}= \frac{1}{N} \left( \sum_{i\in \N_\theta^c} \sum_t \frac{\dsone_{t\notin \T_n}}{T-|\T_n|} [\oneT_{\barT}]_t +\sum_n \dsone_{\theta \in \T_n} [\oneT_{\barT}]_{\theta} \right) \\
& = \frac{1}{N} \left( \sum_{i\in \N_\theta^c} \frac{T-|\T_n|}{T-|\T_n|} 1 +\sum_{i\in \N_\theta} 1 \times [\oneT_{\barT}]_\theta\right) = \frac{|\N_\theta^c| + |\N_\theta| [\oneT_{\barT}]_\theta}{N} = [\oneT_{\barT}]_\theta \ .
\end{align*}
\end{itemize}

To bound the other eigenvalues of the matrix $(A B^\tr) (B A^\tr)$, we rely on spectral graph theory arguments. Consider the weighted graph $\G=(\Tall,\E)$ whose vertices are the time periods $\Tall$ and each edge $(k,\ell) \in \Tall \times \Tall$ with $k \neq \ell$ has weight $S_{k,\ell}= \tfrac{1}{N}\sum_n \tfrac{\dsone_{ \{k,\ell\} \subset \T_n^c }}{ T-|\T_n| }$ (if this quantity is zero, then there is no edge between $k$ and $\ell$). 

The matrix $P \eqd I_T-(A B^\tr)  (B A^\tr)$ verifies for each $k \in \Tall$:
\begin{align}
& \sum_{\ell \neq k }  -P_{k,\ell} = \sum_{\ell \neq k } \tfrac{1}{N}\sum_n \dfrac{\dsone_{ \{k,\ell\} \subset \T_n^c }}{ T-|\T_n| } 
  = \tfrac{1}{N}\sum_n \dfrac{\dsone_{ k \in \T_n^c } (T-|\T_n| -1) }{ T-|\T_n| }  \\
  &     = \tfrac{1}{N}\sum_n (1-\dsone_{ k \in \T_n }) -   \tfrac{1}{N}\sum_n \dfrac{\dsone_{ k \in \T_n^c } }{ T-|\T_n| } = P_{kk}   \ , 
\end{align}
which shows that $P$ is the Laplacian matrix of graph $\G$. 
As Sp$(A B^\tr  B A^\tr)=1-$Sp$(P)$, we want to have a lower bound on the least eigenvalue of $P$ greater than 0, that we denote by $\lambda_1$. 

By rearranging the indices of $\Tall$ in two blocs $\barT$ and $\barT^c$, we observe that $P$ can be written as a block diagonal matrix $P=  \text{diag}(P_{\barT}, 0_{\barT^c} )$. As we are only interested in the positive eigenvalues of $\P$, we can therefore study the linear application associated to $P$ restricted to the subspace $\operatorname{Vect}(e_t)_{t\in\barT}$.

As $\oneH$ is an eigenvector of $P$ associated to $\lambda_0=0$, from the minmax theorem, we have:
\begin{equation} \label{eq:lambda1-minmax}
\lambda_1 = \min_{u \perp \oneH, u \neq 0 } \dfrac{ u^\tr P u}{ u ^\tr u} \ .
\end{equation}
Let us consider an eigenvector $u$ realizing  \eqref{eq:lambda1-minmax}. Let $u_{t^*}\eqd \max_t u_t$ and $u_{s^*}\eqd \min_t u_t$  and let $d_{s^*,t^*}$ be the distance between $s^*$ and $t^*$ in $\G$, and let $(s^*\tm t^*)$ denote a shortest path from $s^*$ to $t^*$ in $\G$.
 As $P$ is a Laplacian matrix, we have:
\begin{align}
& u^\tr P u  = \frac{1}{2}\sum_{k , \ell \in \barT}  \hh\hh-P_{k,\ell} (u_k - u_\ell)^2  \geq \frac{1}{2}\hh\hh \sum_{ \{k , \ell\} \in (s^*\text{-}t^*)} \hh\hh\hh\hh\hh\hh\hh\hh -P_{k,\ell} (u_k - u_\ell)^2 %
 \geq   \min_{k,\ell \in (s^*\text{-}t^*)} (-P_{k,\ell})  \dfrac{(u_{t^*}- u_{s^*})^2 }{ d_{s^*,t^*}} \ , \label{eq:bounduPu}
\end{align}
where the last inequality is obtained from Cauchy-Schwartz inequality. 

Let us write the path $(s^*\text{-}t^*)=(t_0,t_1,\dots,t_d)$. As $(s^*\text{-}t^*)$ is a shortest path, for each $k\in \{0,d-1\}$, the edge $(t_k,t_{k+1})$ exists so there exists $n\in\Nall$ such that $ \{t_k,t_{k+1}  \} \subset \T_n^c$. Moreover, for each $n$, we have $\T_n^c \cap \{t_0, \dots, t_{k-1}, t_{k+2}, \dots , t_d \} = \emptyset$, otherwise we could ``shortcut'' the path $(s^*\text{-}t^*)$, thus we have $|\T_n| \geq d-1$.
We obtain:
\begin{equation*}
- P_{t_k,t_{k+1}} = \frac{1}{N}\sum_n \dfrac{\dsone_{ \{t_k,t_{k+1}\} \subset \T_n^c }}{ T-|\T_n| }  \geq \frac{1}{N (T-d+1)} \ .
\end{equation*} 
 On the other hand, we have $(u_{t^*}- u_{s^*}) \geq u_{t^*} + \tfrac{u_{t^*}}{T-1}= \tfrac{T}{T-1} u_{t^*} \geq \tfrac{T}{(T-1)\sqrt{T}}\norm{u}_2$.  

 Using these bounds and  \eqref{eq:bounduPu}, we obtain:  
\begin{align*}
& u^\tr P u \geq \frac{(u_{t^*}- u_{s^*})^2}{N(T-d_{s^*,t^*}+1) d_{s^*,t^*}}   \geq \frac{4T}{N(T+1)^2(T-1)^2} \norm{u}_2^2  \geq \frac{4}{N(T+1)^2(T-1)} \norm{u}_2^2 .
\end{align*} 
Therefore, $\lambda_1\geq \frac{4}{N(T+1)^2(T-1)} \eqd \kappa_{N,T}$ and the greatest singular value lower than one of $(A B^\tr) (B A^\tr)$ is $1-\kappa_{N,T}$.
We conclude by applying successively \Cref{lem:nishiharaSpectral} and \Cref{lem:nishiharaFriedrich}, to obtain the  convergence rate stated in \Cref{prop:boundCvgRate}.
\end{proof}
\begin{figure}[ht!]
\centering
\vspace{-0.5cm}
\includegraphics[width=0.5\textwidth]{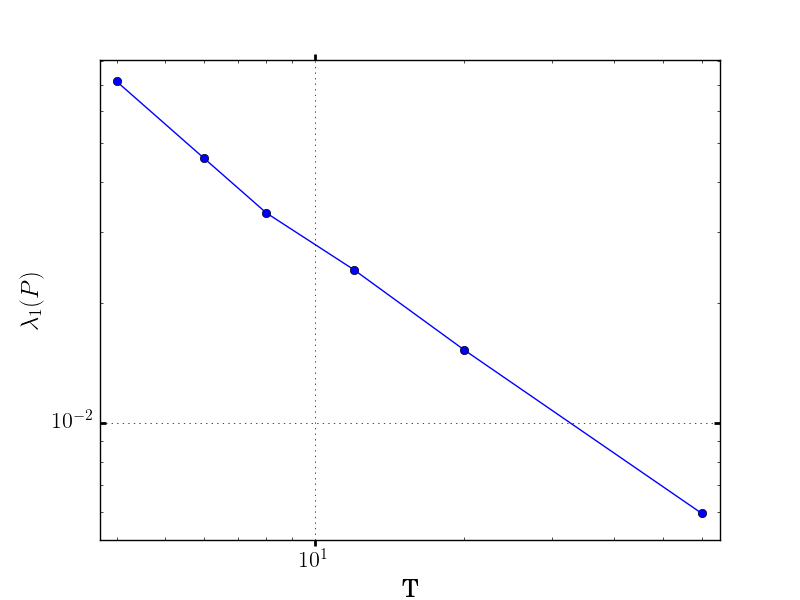}
\caption{Evolution of the convergence rate, given as $\lambda_1(P)$ (lowest nonzero eigenvalue of $P$), with $N=6$ and $T \in \{4,6,8,12,20,60\}$. \textit{The worst convergence rate is evaluated by taking $100 \times T$  random draws of the sets $\T_n \subset \Tall$ for each $n$, and evaluating the eigenvalue of the matrix. The slope is around -0.93, which indicates that in practice the convergence rate is $\mathcal{O}(T^{-1})$, faster than the upper bound in $\mathcal{O}(T^{-3})$ established in  \Cref{prop:boundCvgRate}.} \vspace{-0.5cm}}\end{figure}

\section{Generalization to Polyhedral Agents Constraints} \label{sec:generArbitraryIndividualSet}

In this section, we extend our results to a more general framework where for each $n\in\Nall$, $\X_n$ is an arbitrary polyhedron, instead of having the particular structure given in \eqref{eq:XnTransport}. 
Let us now consider that $(\X_\n)_\n$ are  polyhedra with, for each $n$:
\begin{equation} \label{eq:Xnpolyhedral}
\X_\n= \{ \xx_\n \in \rr^T | A_\n \xx_\n \leq \bb_\n \} \ ,
\end{equation}
with $A_\n  \in \mc{M}_{k_n,T}(\rr)$ with $k_n \in \nit$. The disaggregation problem \eqref{pb:disag}, \emph{with} $\pp \in \P$ \emph{fixed}, writes:
\begin{subequations} \label{pb:disag-general}
\begin{align}
& \min_{\xx \in \rr^{NT}} 0 \\
\text{s.t.} \ &  A_0 \xx= B \pp  \quad (\llam_0) \label{eq:c-ag2}\\ 
 &  A_\n \xx_\n \leq \bb_\n , \ \forall \n \in \Nall\quad (\llam_\n) \label{eq:c-cons2}\ .
\end{align}
\end{subequations}
where $A_0=J_{1,N} \otimes I_T$, $B=I_T$,   (such that \eqref{eq:c-ag2} corresponds to the aggregation constraint $\sum_n \xx_n =\pp$) and  $\llam_0 \in \rr^T$, $(\llam_\n)_{\n\in\Nall} \in \rr_+^{\sum_n k_n}$ are the Lagrangian multipliers associated to \eqref{eq:c-ag2} and \eqref{eq:c-cons2}. %

 With the polyhedral constraints  \eqref{eq:Xnpolyhedral}, the graph representation of the disaggregation problem, as illustrated in \Cref{fig:graphDisagregationEx} is no longer valid. 
Consequently, one can not directly apply Hoffman's theorem (\Cref{th:hoffman}) to obtain a characterization of disaggregation feasibility by inequalities on $\pp$. 
However, using duality theory,   \Cref{prop:genLambdaChar} below also gives  a characterization of disaggregation:
\begin{proposition} \label{prop:genLambdaChar}
A profile $\pp \in \P$ is disaggregable iff:
\begin{equation}
\forall \big(\llam_0, \llam_1, \dots \llam_N \big) \in \Lambda, \ \ \llam_0^\tr \Bp \pp  + \txt\sum_{\n\in\Nall} \llam_\n^\tr  \bb_\n \geq 0 \ , \label{eq:condLambda}
\end{equation}
where
\[ 
\Lambda \eqd \{ \llam_0 \in \rr^{k_0}, \forall n\in\Nall, \llam_\n \in \rr^{k_\n}_+  \ | \   A_0^\tr \llam_0 +  A^\tr (\llam_n)_n = 0 \},
\]
with $A \eqDef \diag(A_\n)_{\n\in\Nall}$.
\end{proposition}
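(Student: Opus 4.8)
The plan is to observe that a profile $\pp \in \P$ is disaggregable exactly when the linear system consisting of \eqref{eq:c-ag2} and \eqref{eq:c-cons2} admits a solution $\xx \in \rr^{NT}$, and then to characterize feasibility of this system by a theorem of alternatives (Farkas' lemma). Collecting the individual constraints \eqref{eq:c-cons2} into $A\xx \leq \bb$ with $A \eqd \diag(A_\n)_\n$ and $\bb \eqd (\bb_\n)_{\n\in\Nall}$, the system reads $A_0\xx = \Bp\pp$ together with $A\xx \leq \bb$.

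Since this mixes an equality with inequalities, I would first put it in pure inequality form $M\xx \leq \bm{q}$ by splitting \eqref{eq:c-ag2} into $A_0\xx \leq \Bp\pp$ and $-A_0\xx \leq -\Bp\pp$, i.e.
\[
M \eqd \begin{pmatrix} A_0 \\ -A_0 \\ A \end{pmatrix}, \qquad \bm{q} \eqd \begin{pmatrix} \Bp\pp \\ -\Bp\pp \\ \bb \end{pmatrix}.
\]
Farkas' lemma states that $M\xx \leq \bm{q}$ is solvable if and only if $\bm{q}^\tr \yy \geq 0$ for every $\yy \geq 0$ satisfying $M^\tr \yy = 0$. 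Writing the multiplier as $\yy = (\yy^+, \yy^-, \mmu)$ with $\yy^+, \yy^- \geq 0$ and $\mmu = (\llam_\n)_\n \geq 0$, and setting the \emph{free} variable $\llam_0 \eqd \yy^+ - \yy^- \in \rr^{k_0}$, the condition $M^\tr \yy = 0$ becomes $A_0^\tr \llam_0 + A^\tr(\llam_\n)_\n = 0$ --- precisely the defining condition of $\Lambda$ --- while the certificate value becomes $\bm{q}^\tr \yy = \llam_0^\tr \Bp\pp + \sum_\n \llam_\n^\tr \bb_\n$. Hence the Farkas dichotomy says that the system is solvable iff $\llam_0^\tr \Bp\pp + \sum_\n \llam_\n^\tr \bb_\n \geq 0$ for all $(\llam_0, \dots, \llam_N) \in \Lambda$, which is exactly \eqref{eq:condLambda}.

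I do not expect a genuine obstacle, as this is a direct application of linear programming duality; the only point demanding care is the bookkeeping of signs, in particular that splitting the equality \eqref{eq:c-ag2} frees the multiplier $\llam_0$ (which ranges over all of $\rr^{k_0}$) whereas the multipliers $\llam_\n$ attached to the one-sided constraints \eqref{eq:c-cons2} stay nonnegative. One could equivalently argue through the dual of the feasibility program $\min\{0 : A_0\xx = \Bp\pp,\ A\xx \leq \bb\}$ and invoke strong LP duality, but the Farkas route avoids having to treat the value $+\infty$ attained by an infeasible primal.
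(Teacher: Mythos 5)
Your proof is correct and takes essentially the same route as the paper: the paper phrases the argument as strong Lagrangian duality on the feasibility LP together with a Farkas-type infeasibility certificate, which is exactly the theorem of alternatives you invoke after splitting the equality \eqref{eq:c-ag2} into two inequalities. Your sign bookkeeping (free $\llam_0$, nonnegative $\llam_\n$) matches the paper's definition of $\Lambda$ precisely, so there is no gap.
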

\begin{proof}
From strong duality, we have:
\begin{align}
& \min_{\xx \in \rr^{NT}}  \max_{ \llam_0 \in \rr^{k_0}, \llam_\n \in \rr^{k_\n}_+} \llam_0^\tr (A_0\xx - \Bp \pp)  + \sum_\n \llam_\n^\tr (A_\n \xx_\n  - \bb_\n ) \\ 
& = \begin{array}{l}
\displaystyle\max_{ \llam_0 \in \rr^{k_0}, \llam_\n \in \rr^{k_\n}_+} - \llam_0^\tr \Bp \pp  - \sum_\n \llam_\n^\tr  \bb_\n  \\
\text{s.t. }  \llam_0^\tr A_0 + (\llam_n)_n^\tr A = 0
\end{array}   \label{eq:pbgendual} \ .
\end{align}
If the  polytope $\Y_{\pp} \cap \X$ given by the constraints of \eqref{pb:disag-general} is empty, then there is an infeasibility  certificate $ \llam^\tr= (\llam_0^\tr\ \llam_1^\tr \dots \llam_N^\tr) \in \rr^{T} \times \prod_n \rr_+^{k_\n}$ such that:
\begin{align}
\label{eq:cond-inf}
& \llam_0^\tr A_0 + (\llam_n^\tr)_n A = 0 %
\text{ and } 
 \llam_0^\top \Bp  \pp + (\llam_n)_n^\top \bb <0  \ . %
\end{align}
On the other hand, if $\Y_{\pp} \cap \X$ is nonempty, then a solution to the dual problem  \eqref{eq:pbgendual} is bounded, which implies that $\forall \llam\eqd (\llam_0,(\llam_n)_n) \in \Lambda, \ \ \llam_0^\tr \Bp \pp  + \sum_\n \llam_\n^\tr  \bb_\n \geq 0 \ $.

\end{proof}
As opposed to Hoffman circulation's theorem where disaggregation is characterized by a finite number of inequalities, \Cref{prop:genLambdaChar} involves a priori an infinite  number of inequalities. 

However, we know that the polyhedral cone $\Lambda$ can be represented by a finite number of generators (edges), that is, there exists $\Lambda^* \eqd \{ \llam^{*(1)}, \dots, \llam^{*(d)} \}$  such that:
\begin{equation}
\Lambda = \Big\{ \txt\sum_{1 \leq i \leq d} \alpha_i \llam^{*(i)} \ | \ (\alpha_i)_i \in \rr_+^d \Big\} \ .
\end{equation} 
Thus, we obtain the following corollary to \Cref{prop:genLambdaChar}:
\begin{corollary}\label{cor:condLambdaFinite} There exists a finite set $\Lambda^* \subset \Lambda$ such that, for any $\pp \in \P$, $\pp$ is disaggregable iff:
\begin{equation}\label{eq:condLambdaFinite}
\forall (\llam_0, (\llam_n)_n) \in \Lambda^*, \ \ \llam_0 \Bp \pp  + \txt\sum_\n \llam_\n  \bb_\n \geq 0 \ . 
\end{equation}

\end{corollary}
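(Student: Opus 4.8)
The plan is to read off the corollary from Proposition~\ref{prop:genLambdaChar} combined with the finite generation of the cone $\Lambda$ that was recalled just before the statement. The only structural facts I need are that the quantity being tested is \emph{linear} in the multiplier $\llam$, and that $\Lambda$ is finitely generated. Concretely, I would fix $\pp$ and introduce the linear functional
\[
L_\pp(\llam) \eqd \llam_0^\tr \Bp \pp + \sum_{n\in\Nall} \llam_n^\tr \bb_n, \qquad \llam=(\llam_0,(\llam_n)_n)\in\Lambda .
\]
By Proposition~\ref{prop:genLambdaChar}, $\pp$ is disaggregable if and only if $L_\pp(\llam)\geq 0$ for \emph{every} $\llam\in\Lambda$, so it suffices to show that this condition is equivalent to the finitely many inequalities $L_\pp(\llam^{*(i)})\geq 0$, $i=1,\dots,d$, over the generators $\Lambda^*=\{\llam^{*(1)},\dots,\llam^{*(d)}\}$.

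One implication is immediate: since $\Lambda^*\subset\Lambda$, if $L_\pp$ is nonnegative on all of $\Lambda$ then in particular it is nonnegative on each generator. For the converse, I would take an arbitrary $\llam\in\Lambda$ and use the representation $\llam=\sum_{i=1}^d \alpha_i\,\llam^{*(i)}$ with coefficients $\alpha_i\geq 0$, available because $\Lambda$ is a polyhedral cone generated by $\Lambda^*$. Linearity of $L_\pp$ then gives $L_\pp(\llam)=\sum_{i=1}^d \alpha_i\, L_\pp(\llam^{*(i)})$, a nonnegative combination of nonnegative terms, hence $L_\pp(\llam)\geq 0$. This yields the equivalence, and combined with Proposition~\ref{prop:genLambdaChar} it proves that testing~\eqref{eq:condLambdaFinite} on $\Lambda^*$ characterizes disaggregability.

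There is essentially no genuine obstacle: the entire content is the Minkowski--Weyl representation of $\Lambda$ by finitely many extreme rays, invoked in the paragraph preceding the corollary, together with the elementary fact that $L_\pp$ is linear in $\llam$. The one point I would take care to record explicitly is that the finite family $\Lambda^*$ depends only on the cone $\Lambda$, and hence only on the matrices $A_0$ and $A\eqd\diag(A_n)_{n\in\Nall}$, and \emph{not} on $\pp$. This uniformity is what makes the statement meaningful, since it guarantees that a single finite set of linear inequalities in $\pp$ characterizes disaggregability for all $\pp\in\P$ simultaneously.
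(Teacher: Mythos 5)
Your proposal is correct and matches the paper's own (implicit) argument exactly: the paper derives \Cref{cor:condLambdaFinite} by combining \Cref{prop:genLambdaChar} with the Minkowski--Weyl finite generation of the polyhedral cone $\Lambda$, and the linearity-plus-nonnegative-combination step you spell out is precisely what makes that derivation immediate. Your explicit remark that $\Lambda^*$ depends only on $A_0$ and $A$ (not on $\pp$) is a worthwhile clarification, but it does not change the route.
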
 

\begin{remark}%
In the transportation case \eqref{eq:XnTransport}, we can write each agent constraints in the form $A_\n \xx_\n \leq \bb_\n$  (writing the equality $\sum_t x\nt = E_n$ is written as two inequalities), and Hoffman conditions \eqref{eq:hoffmanCondReverse} can be written in the form \eqref{eq:condLambdaFinite}. 
Moreover, \Cref{th:cutN0T0violated} ensures that one possibility for $\Lambda^*$ of \Cref{cor:condLambdaFinite} is to consider the collection of $2^T$ multipliers corresponding to the subsets $\Tcut \subset \Tall$ and $\Ncut$ minimizing \eqref{eq:ineq2powTv1}. We skip the details here for brevity. 
\end{remark}

As in the first part of the paper, we want to use APM to decompose problem \eqref{pb:global} and, in the case where disaggregation is not possible,  use the result of APM to generate 
an  inequality \eqref{eq:condLambda} violated by the current profile $\pp$.

In the case of impossible disaggregation, APM converges to the orbit $(\yy^\infty,\xx^\infty)$, and $\mmu \eqd \yy^\infty- \xx^\infty$ defines a separating hyperplane $ \bxx +  \mmu^{\perp}$ , where $\bxx= \frac{ \yy^\infty  + \xx^\infty}{2}$, that satisfies, with  $a \eqDef  \bxx.\mmu $  (note that $\bxx$ can be replaced by any  $\yy \in [ \yy^\infty, \xx^\infty]$):
\begin{align*}
& \forall \xx \in \Y_{\pp}; \  \mmu^\tr \xx  > a   
& \forall \xx \in \X ; \  a > \mmu^\tr \xx  ,
\end{align*}
which give lower bounds on the linear problems (the second one is decomposed because $A$ is a block-diagonal matrix, but it can also be written in one  problem):
\begin{align} 
\label{eq:pbToFindLambdas}
& \begin{array}{l}
\displaystyle\min_{\xx\in\rr^{NT} }\mmu^\tr \xx  \\
A_0 \xx = \Bp \pp \ (\llam_0)
\end{array}  \ = 
\begin{array}{l}
\displaystyle\max_{\llam_0 \in \rr^{k_0}} - \llam_0^\top \Bp  \pp \\
\mmu= -A_0^\top \llam_0 
\end{array} \\
 \label{eq:pbfind-lambdaUsers}
\text{and } \quad  \forall \n \in \Nall, \ \ &  \begin{array}{l}
\displaystyle\max_{\xx\in\rr^{NT} }\mmu_\n \xx_\n  \\
A_\n \xx_\n \leq \bb_\n \ (\llam_\n)
\end{array} \ =
\begin{array}{l}
\min_{\llam_\n \in \rr_+^{k_\n}} \bb_\n^\top \llam_\n \\
 \mmu_\n= \llam_\n^\top A_\n 
\end{array} \ .
\end{align}
Strong duality on these problems implies that there exist $\llam_0 $ and $\llam$ such that:
\begin{align}
& \mmu= -A_0^\top \llam_0  \text{ and }  a < - \llam_0^\top \Bp  \pp   %
& \mmu= \llam^\top A \text{ and } a >  \bb^\top \llam \label{eq:muFarkexistLam} \ .
\end{align}
Thus, we obtain $(\llam_0,\llam) \in \Lambda$   satisfying \eqref{eq:cond-inf}, that is, $\llam_0^\top \Bp  \pp  + \bb^\top \llam <0$,  and we can use this to add a new valid additional inequality on $\pp$ of form \eqref{eq:condLambda}, that will change the current profile $\pp$:
\begin{equation}
\llam_0^\top \Bp  \pp + \llam^\top \bb  \geq 0 
\end{equation}

In \Cref{algo:decompAPM-polyhedral} below, we  summarize the proposed decomposition of problem \eqref{pb:global}. This is a generalization of the decomposition principle used for  \Cref{algo:confidOptimDisag}.
\begin{algorithm}[!ht]
\begin{algorithmic}[1]
\Require Start with $\CutSet^{(0)}=\{\}$, $k=0$, \dsg= $\False$ \;
\While{not \dsg} 
\State get solution $\pp^{(k)}$ of problem $\min_{\pp \in \P} \{f(\pp)\  | \  \llam_0^\top \Bp  \pp + \llam^\top \bb  \geq 0  , \ \forall \llam \in \CutSet^{(k)}  \} $  \;
\State get $\mmu^{(k)}= \yy^\infty-\xx^\infty  \leftarrow $ APM($\Y_{\pp^{(k)}},\X)$ \;
\If{$\mmu^{(k)}\neq 0$} \;
\State obtain $\llam_0^{(k)} \leftarrow  \max_{\llam_0 \in \rr^{k_0}} \{ -  \llam_0^\top \Bp  \pp^{(k)} \ | \ \mmu^{(k)}= -A_0^\top \llam_0  \}$ \;
\State obtain for each $\n$, $\llam^{(k)}_\n \leftarrow \max_{\llam_\n \geq 0} \{ \bb_\n^\top \llam_\n  \ | \  \mmu_\n^{(k)}= \llam_\n^\top A_\n  \}$\;
\State add  $ \CutSet^{(k+1)} =\CutSet^{(k)} \cup \{(\llam_0^{(k)}, \llam^{(k)})\} $ \;
\Else
\State Return $\dsg=\True$, $\pp^{(k)}$ as optimal solution
\EndIf
\State $k \leftarrow k+1$ \;
\EndWhile \;
\end{algorithmic}
\caption{Non-intrusive optimal disaggregation with polyhedral constraints }
\label{algo:decompAPM-polyhedral}
\end{algorithm}

\begin{remark} \label{rem:precisionFiniteAlgoGen}
We use the fact that $\mmu=\yy^\infty-\xx^\infty$ although, as before, we only have an \emph{approximation } of this quantity. The approximation has to be precise enough to ensure that the solution obtained verifies $\llam_0^\top \Bp  \pp  + \bb^\top \llam <0$. In practice, one can proceed as in the transportation case and \Cref{algo:vonNeumannProjasymConfid} use  a large $\epscvg$ as stopping criteria in APM, then compute $(\llam_0,\llam) \in \Lambda$ and check if $\llam_0^\top \Bp  \pp  + \bb^\top \llam <0$. If this is not the case, restart with $\epscvg= \epscvg/2$.
\end{remark}

\begin{remark}
When $\Y_{\pp}=\{ \xx \in \rr^{NT} | \Arhs_0 \xx = \Bseuil_0 \pp \} = \{ \xx | \sum_{n} \xx_n = \pp \} $, we can obtain a non-intrusive version of APM on $\Y_{\pp}$ and $\X$, similar to \Cref{algo:vonNeumannProjasymConfid}. 
 In this case, \eqref{eq:muFarkexistLam} ensures that we have  $\mmu_{\n,t}=-[{\llam_0}]_t$ for each $\n\in\Nall$, and $\llam_0$ is fixed by $\mmu$.
The only difference with the transportation case  for a non-intrusive APM in the general polyhedral case, is in the way of computing the valid constraint violated by $\pp$. Thus, %
Lines 16 to 19 of \Cref{algo:vonNeumannProjasymConfid} have to be replaced by \Cref{algo:vonNeumannProjasymConfid-poly}.
\begin{algorithm}[!ht]
\begin{algorithmic}[1]
\makeatletter
\setcounter{ALG@line}{15}
\makeatother
\For{each agent $n \in \Nall$} 
\State compute $M_n$ optimal value of \eqref{eq:pbfind-lambdaUsers}.
\EndFor
\State Operator computes $M \leftarrow $ SMCA$( (M_n)_n)$   \label{algline:NIAPMpoly-compMagg}\;
\State \textbf{if} {$-\nnu.\pp  + M <0$} \textbf{ then} \label{algline:NIAPMpoly:checkconstraint}
\State \quad \Return \textsc{Disag} $\leftarrow$ \textsc{False}, $-\nnu, M$\; 
\end{algorithmic}
\caption{Modification of Lines 16-19 of \Cref{algo:vonNeumannProjasymConfid} for NI-APM with polyhedral constraints}
\label{algo:vonNeumannProjasymConfid-poly}
\end{algorithm}

\end{remark}

\begin{remark}[Link with Benders' Decomposition]
\label{subsec:benders}
In this generalized case, we obtain an algorithm related to Benders' decomposition \cite{benders1962partitioning} 
 (recall that in our specific case \eqref{pb:disag-general}, the cost function does not involve the variable $\xx$ but only variable $\pp$).

The difference between the proposed  \Cref{algo:decompAPM-polyhedral} and Benders' decomposition %
 lies in the way of generating the new cut. Benders' decomposition would directly solve the dual problem \eqref{eq:pbgendual}:  $  \max_{\llam} \{ - \llam_0^\tr B_0\pp  - (\llam_n)_n^\tr \bb  \ | \ \llam_0 A_0 + (\llam_n)_n A = 0 \}$ and obtain a cut if it is unbounded. 
 However, this problem involves the constraints of all users (through $A$ and $\bb$), and is it not straightforward to obtain a method to solve this problem in a decentralized and efficient way. 

\end{remark}
\section{Numerical Examples}
\label{sec:examples}
\subsection{An Illustrative Example with T=4}

\label{subsec:illusToyExample}

In this section we illustrate the iterations of the method proposed in this paper on an example with $T=4$ and $N=3$. 
Assuming that we have to satisfy the aggregate constraint $\sum_t p_t = \sum_n E_n$, we can use the projections on this affine space of solutions of master problems $(\pp^{(s)})_s$ to visualize them in dimension 3.

One can wonder if, in the transportation case, applying \Cref{algo:confidOptimDisag} or \Cref{algo:decompAPM-polyhedral} will always lead to the same cuts and solutions: the answer is no, as  shown by the instance considered in this section, for which \Cref{algo:confidOptimDisag} converges in 3 iterations and \Cref{algo:decompAPM-polyhedral} needs 4 iterations.

We consider the problem \eqref{pb:global} with agents constraints \eqref{eq:XnTransport} with parameters $\llbx\eqd 0$ and:
\begin{equation}
\uubx \eqd  \begin{matrix} [ 0.8, 0.2, 0.7, 0.1] \\
 [ 0.5, 0.1, 0.3, 0.6] \\
 [ 0.1, 0.1, 0.7, 0.2] \end{matrix}  \ \ , \ \  \begin{matrix}
E_1= 1.8 \\ E_2= 0.4 \\ E_3=1.1
\end{matrix}  \ , \quad \  \forall \pp \in \rr^4,  f(\pp )\eqd  \sum_{1 \leq t \leq 4} 0.8 \times p_t + 0.1 \times p_t^2 \   .
\end{equation}
Considering the aggregate equality constraint  $\sum_{ 1 \leq t  \leq 4 } p_t = \sum_{1 \leq n \leq 3} E_n = 3.3$, we use the canonical projection of  $4$ dimensional vectors into the 3 dimensional space $(p_1,p_2,p_3)$ to visualize the cuts and solutions.
In this example, there exist 
 $2^T-2=14$  nontrivial Hoffman inequalities characterizing disaggregation from \Cref{th:hoffman}. 
The projection of the obtained  polytope $\Popt$, as defined in \eqref{eq:def-Popt}, is represented in \Cref{fig:example-Popt}. One can remark that this polytope has only 6 facets.
Our \Cref{algo:confidOptimDisag} applied on this instance with $\epsdis=10^{-3}$ and $\epscvg=10^{-5}$ converges in 3 iterations, with successive solutions of the master problem \eqref{pb:master} and cuts added:
\begin{align*}
& \pp^{(1)}=[ 1.  ,  0.4 ,  1.  ,  0.9] & \overset{\text{cut}}{\longrightarrow}	\quad  p_1  + p_2 + p_4  \leq 1.9  \\
& \pp^{(2)}= [ 0.75 ,  0.4  ,  1.4  ,  0.75]   	 & \overset{\text{cut}}{\longrightarrow}	\quad  p_2 + p_3 +p_4 \leq 2.4  \\
&\pp^{(3)} = [ 0.9 ,  0.4 ,  1.4 ,  0.6]  \ .
\end{align*}
\Cref{fig:example-hoffman} represents in the projection space the three successive solutions and the two generated cuts (in red for each iteration).

On the other hand, applying \Cref{algo:decompAPM-polyhedral} with the same precision parameters$(\epsdis,\epscvg)$,  there are 3 cuts generated and 4 resolutions of master problem needed for convergence, given by (we refer the reader to \Cref{rem:precisionFiniteAlgoGen} for the numerical precision obtained in the values):
\begin{align*}
& \hat{\pp}^{(1)}=[ 1.  ,  0.4 ,  1.  ,  0.9] & 	\overset{\text{cut}}{\longrightarrow} \quad    -0.25 p_1  -0.25 p_2 + 1.0 p_3  -0.5 p_4  \geq 0.75 \\
& \hat{\pp}^{(2)}= [ 0.8097 ,  0.4    ,  1.3984 ,  0.6919]  & \overset{\text{cut}}{\longrightarrow} \quad  	 1.0 p_1  -0.509 p_2 + 0.018 p_3  -0.509 p_4 \geq 0.4161  \\
&\hat{\pp}^{(3)} = [ 0.9062 ,  0.4    ,  1.3823 ,  0.6115]  & \overset{\text{cut}}{\longrightarrow} \quad -0.333 p_1  -0.333 p_2 + 1.0 p_3 -0.333 p_4 \geq  0.7666 \\
& \hat{\pp}^{(4)}=[ 0.9 ,  0.4 ,  1.4 ,  0.6] \ .
\end{align*}
The 4 successive solutions and the 3 added cuts are represented in the three dimensional space on \Cref{fig:example-genAlgo}: we observe that the last cut needed to obtain the convergence of \Cref{algo:decompAPM-polyhedral}, corresponds to the first one added with \Cref{algo:confidOptimDisag}.

 Due to the strict convexity of the cost function $\pp \mapsto f(\pp)$, the final solution obtained is the same, unique aggregated optimal solution of \eqref{pb:global}. 
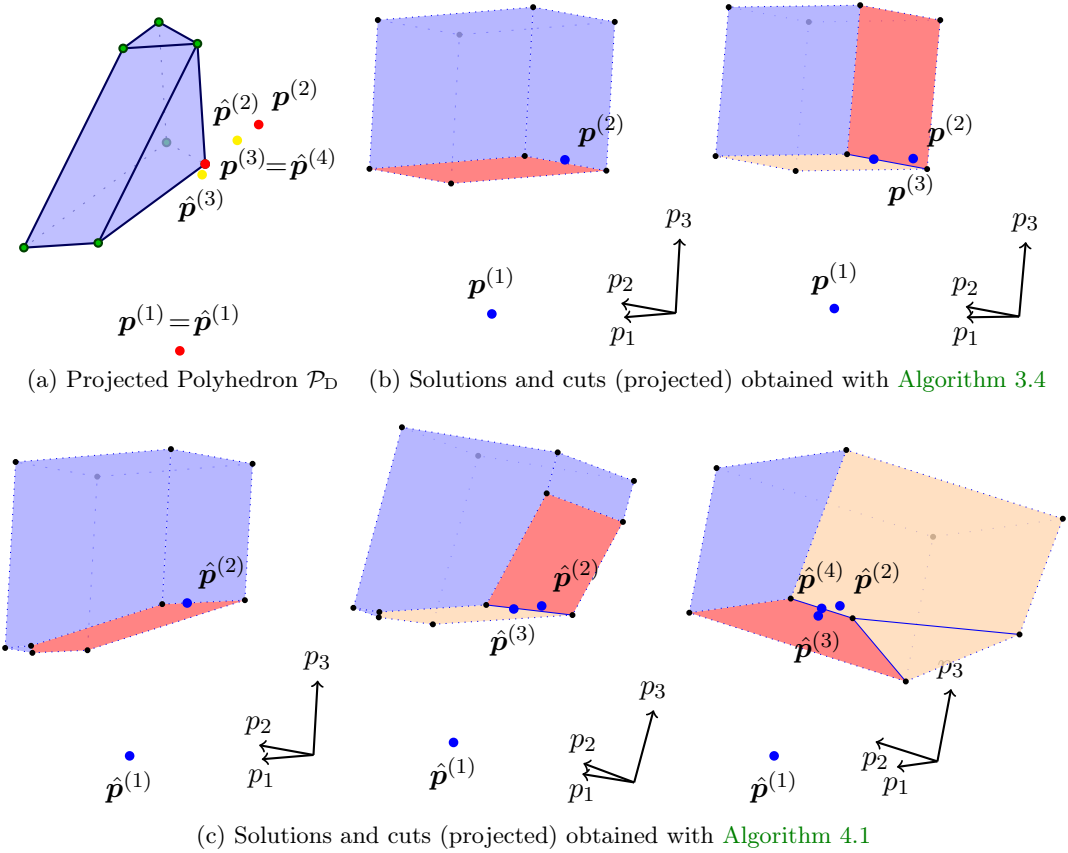
\begin{figure*}[ht!]
    \centering
    \subfloat[Projected Polyhedron $\Popt$]{   \label{fig:example-Popt}
\begin{tikzpicture}%
	[x={(-0.677335cm, -0.499961cm)},
	y={(0.734046cm, -0.410503cm)},
	z={(-0.048929cm, 0.762579cm)},
	scale=7.000000,
	back/.style={loosely dotted, thin},
	edge/.style={color=blue!35!black, thick},
	facet/.style={fill=blue!35,fill opacity=0.700000},
	vertex/.style={inner sep=1pt,circle,draw=green!25!black,fill=green!75!black,thick,anchor=base}]

\coordinate (0.90000, 0.30000, 1.70000) at (0.90000, 0.30000, 1.70000);
\coordinate (0.90000, 0.30000, 1.40000) at (0.90000, 0.30000, 1.40000);
\coordinate (0.90000, 0.40000, 1.40000) at (0.90000, 0.40000, 1.40000);
\coordinate (1.20000, 0.40000, 1.40000) at (1.20000, 0.40000, 1.40000);
\coordinate (1.30000, 0.30000, 1.40000) at (1.30000, 0.30000, 1.40000);
\coordinate (1.00000, 0.30000, 1.70000) at (1.00000, 0.30000, 1.70000);
\coordinate (0.90000, 0.40000, 1.70000) at (0.90000, 0.40000, 1.70000);
\draw[edge,back] (0.90000, 0.30000, 1.70000) -- (0.90000, 0.30000, 1.40000);
\draw[edge,back] (0.90000, 0.30000, 1.40000) -- (0.90000, 0.40000, 1.40000);
\draw[edge,back] (0.90000, 0.30000, 1.40000) -- (1.30000, 0.30000, 1.40000);
\node[vertex] at (0.90000, 0.30000, 1.40000)     {};
\fill[facet] (0.90000, 0.40000, 1.70000) -- (0.90000, 0.40000, 1.40000) -- (1.20000, 0.40000, 1.40000) -- cycle {};
\fill[facet] (0.90000, 0.40000, 1.70000) -- (0.90000, 0.30000, 1.70000) -- (1.00000, 0.30000, 1.70000) -- cycle {};
\fill[facet] (0.90000, 0.40000, 1.70000) -- (1.20000, 0.40000, 1.40000) -- (1.30000, 0.30000, 1.40000) -- (1.00000, 0.30000, 1.70000) -- cycle {};
\draw[edge] (0.90000, 0.30000, 1.70000) -- (1.00000, 0.30000, 1.70000);
\draw[edge] (0.90000, 0.30000, 1.70000) -- (0.90000, 0.40000, 1.70000);
\draw[edge] (0.90000, 0.40000, 1.40000) -- (1.20000, 0.40000, 1.40000);
\draw[edge] (0.90000, 0.40000, 1.40000) -- (0.90000, 0.40000, 1.70000);
\draw[edge] (1.20000, 0.40000, 1.40000) -- (1.30000, 0.30000, 1.40000);
\draw[edge] (1.20000, 0.40000, 1.40000) -- (0.90000, 0.40000, 1.70000);
\draw[edge] (1.30000, 0.30000, 1.40000) -- (1.00000, 0.30000, 1.70000);
\draw[edge] (1.00000, 0.30000, 1.70000) -- (0.90000, 0.40000, 1.70000);
\node[vertex] at (0.90000, 0.30000, 1.70000)     {};
\node[vertex] at (0.90000, 0.40000, 1.40000)     {};
\node[vertex] at (1.20000, 0.40000, 1.40000)     {};
\node[vertex] at (1.30000, 0.30000, 1.40000)     {};
\node[vertex] at (1.00000, 0.30000, 1.70000)     {};
\node[vertex] at (0.90000, 0.40000, 1.70000)     {};
\node[vertex,color=red,label=90:${\pp^{(1)}\hh=\hh\hat{\pp}^{(1)}}$] at (1,0.4,1 )  {};
\node[vertex,color=red,label=45:$\pp^{(2)}$] at ( 0.75 ,  0.4  ,  1.4)  {};
\node[vertex,color=red,label=0:${\pp^{(3)}\hh\hh=\hh\hat{\pp}^{(4)}}$] at ( 0.9 ,  0.4 ,  1.4 )  {};

\node[vertex,color=yellow,label=90:$\hat{\pp}^{(2)}$] at (0.81 ,  0.4  ,  1.4 )  {};
\node[vertex,color=yellow,label=-90:$\hat{\pp}^{(3)}$] at ( 0.91 ,  0.4  ,  1.38 )  {};
\end{tikzpicture}
 }
            \subfloat[Solutions and cuts (projected) obtained with \Cref{algo:confidOptimDisag}]{
            \label{fig:example-hoffman}
\begin{tikzpicture}%
	[x={(-0.688249cm, -0.056987cm)},
	y={(-0.723341cm, 0.130317cm)},
	z={(0.055608cm, 0.989833cm)},
	scale=5.000000,
	back/.style={loosely dotted, thin},
	edge/.style={color=blue!95!black, thick},
	edger/.style={color=blue!95!black, dotted},
	facet/.style={fill=blue!35,fill opacity=0.800000},
	vertex/.style={inner sep=0.51pt,circle,draw=black,fill=black,thick,anchor=base},
	sol/.style={inner sep=1pt,circle,draw=blue,fill=blue,thick,anchor=base}]
\draw[color=black,thick,->]   (0.5 , 0.2 ,  1 ) -- (0.7 , 0.2 ,  1 ) node[anchor= north]{$p_1$};
\draw[color=black,thick,->]  (0.5 , 0.2 ,  1 ) -- (0.5 , 0.4 ,  1 ) node[anchor=south ]{$p_2$};
\draw[color=black,thick,->] (0.5 , 0.2 ,  1 )  -- (0.5 , 0.2 ,  1.2 ) node[anchor=south]{$p_3$};

\coordinate (1.40000, 0.20000, 1.40000) at (1.40000, 0.20000, 1.40000);
\coordinate (1.40000, 0.50000, 1.40000) at (1.40000, 0.50000, 1.40000);
\coordinate (0.80000, 0.20000, 1.40000) at (0.80000, 0.20000, 1.40000);
\coordinate (0.80000, 0.50000, 1.40000) at (0.80000, 0.50000, 1.40000);
\coordinate (0.80000, 0.50000, 1.80000) at (0.80000, 0.50000, 1.80000);
\coordinate (0.80000, 0.20000, 1.80000) at (0.80000, 0.20000, 1.80000);
\coordinate (1.40000, 0.50000, 1.80000) at (1.40000, 0.50000, 1.80000);
\coordinate (1.40000, 0.20000, 1.80000) at (1.40000, 0.20000, 1.80000);
\draw[edge,back] (1.40000, 0.20000, 1.40000) -- (1.40000, 0.20000, 1.80000);
\draw[edge,back] (0.80000, 0.20000, 1.80000) -- (1.40000, 0.20000, 1.80000);
\draw[edge,back] (1.40000, 0.50000, 1.80000) -- (1.40000, 0.20000, 1.80000);
\node[vertex] at (1.40000, 0.20000, 1.80000)     {};
\fill[facet,red!60] (0.80000, 0.50000, 1.40000) -- (1.40000, 0.50000, 1.40000) -- (1.40000, 0.20000, 1.40000) -- (0.80000, 0.20000, 1.40000) -- cycle {};
\fill[facet] (0.80000, 0.20000, 1.80000) -- (0.80000, 0.20000, 1.40000) -- (0.80000, 0.50000, 1.40000) -- (0.80000, 0.50000, 1.80000) -- cycle {};
\fill[facet] (1.40000, 0.50000, 1.80000) -- (1.40000, 0.50000, 1.40000) -- (0.80000, 0.50000, 1.40000) -- (0.80000, 0.50000, 1.80000) -- cycle {};
\draw[edger] (1.40000, 0.20000, 1.40000) -- (1.40000, 0.50000, 1.40000);
\draw[edger] (1.40000, 0.20000, 1.40000) -- (0.80000, 0.20000, 1.40000);
\draw[edger] (1.40000, 0.50000, 1.40000) -- (0.80000, 0.50000, 1.40000);
\draw[edger] (1.40000, 0.50000, 1.40000) -- (1.40000, 0.50000, 1.80000);
\draw[edger] (0.80000, 0.20000, 1.40000) -- (0.80000, 0.50000, 1.40000);
\draw[edger] (0.80000, 0.20000, 1.40000) -- (0.80000, 0.20000, 1.80000);
\draw[edger] (0.80000, 0.50000, 1.40000) -- (0.80000, 0.50000, 1.80000);
\draw[edger] (0.80000, 0.50000, 1.80000) -- (0.80000, 0.20000, 1.80000);
\draw[edger] (0.80000, 0.50000, 1.80000) -- (1.40000, 0.50000, 1.80000);
\node[vertex] at (1.40000, 0.20000, 1.40000)     {};
\node[vertex] at (1.40000, 0.50000, 1.40000)     {};
\node[vertex] at (0.80000, 0.20000, 1.40000)     {};
\node[vertex] at (0.80000, 0.50000, 1.40000)     {};
\node[vertex] at (0.80000, 0.50000, 1.80000)     {};
\node[vertex] at (0.80000, 0.20000, 1.80000)     {};
\node[vertex] at (1.40000, 0.50000, 1.80000)     {};
\node[sol,label=90:$\pp^{(1)}$] at (1,0.4,1 )  {};
\node[sol,label=45:$\pp^{(2)}$] at ( 0.75 ,  0.4  ,  1.4)  {};

\end{tikzpicture}
\begin{tikzpicture}%
	[x={(-0.690296cm, 0.111443cm)},
	y={(-0.675921cm, 0.253154cm)},
	z={(0.258111cm, 0.960986cm)},
		scale=5.000000,
		rotate=10,
	back/.style={loosely dotted, thin},
	edge/.style={color=blue!95!black, },
	edger/.style={color=blue!95!black, dotted},
	facet/.style={fill=blue!35,fill opacity=0.800000},
	vertex/.style={inner sep=0.51pt,circle,draw=black,fill=black,thick,anchor=base},
	sol/.style={inner sep=1pt,circle,draw=blue,fill=blue,thick,anchor=base}]
\draw[color=black,thick,->]   (0.5 , 0.2 ,  1 ) -- (0.7 , 0.2 ,  1 ) node[anchor= north]{$p_1$};
\draw[color=black,thick,->]  (0.5 , 0.2 ,  1 ) -- (0.5 , 0.4 ,  1 ) node[anchor=south ]{$p_2$};
\draw[color=black,thick,->] (0.5 , 0.2 ,  1 )  -- (0.5 , 0.2 ,  1.2 ) node[anchor=south]{$p_3$};

\coordinate (1.40000, 0.20000, 1.40000) at (1.40000, 0.20000, 1.40000);
\coordinate (1.40000, 0.50000, 1.40000) at (1.40000, 0.50000, 1.40000);
\coordinate (0.90000, 0.20000, 1.40000) at (0.90000, 0.20000, 1.40000);
\coordinate (0.90000, 0.50000, 1.40000) at (0.90000, 0.50000, 1.40000);
\coordinate (0.90000, 0.50000, 1.80000) at (0.90000, 0.50000, 1.80000);
\coordinate (0.90000, 0.20000, 1.80000) at (0.90000, 0.20000, 1.80000);
\coordinate (1.40000, 0.50000, 1.80000) at (1.40000, 0.50000, 1.80000);
\coordinate (1.40000, 0.20000, 1.80000) at (1.40000, 0.20000, 1.80000);
\draw[edge,back] (1.40000, 0.20000, 1.40000) -- (1.40000, 0.20000, 1.80000);
\draw[edge,back] (0.90000, 0.20000, 1.80000) -- (1.40000, 0.20000, 1.80000);
\draw[edge,back] (1.40000, 0.50000, 1.80000) -- (1.40000, 0.20000, 1.80000);
\node[vertex] at (1.40000, 0.20000, 1.80000)     {};
\fill[facet,orange!30] (0.90000, 0.50000, 1.40000) -- (1.40000, 0.50000, 1.40000) -- (1.40000, 0.20000, 1.40000) -- (0.90000, 0.20000, 1.40000) -- cycle {};
\fill[facet,red!60] (0.90000, 0.20000, 1.80000) -- (0.90000, 0.20000, 1.40000) -- (0.90000, 0.50000, 1.40000) -- (0.90000, 0.50000, 1.80000) -- cycle {};
\fill[facet] (1.40000, 0.50000, 1.80000) -- (1.40000, 0.50000, 1.40000) -- (0.90000, 0.50000, 1.40000) -- (0.90000, 0.50000, 1.80000) -- cycle {};
\draw[edger] (1.40000, 0.20000, 1.40000) -- (1.40000, 0.50000, 1.40000);
\draw[edger] (1.40000, 0.20000, 1.40000) -- (0.90000, 0.20000, 1.40000);
\draw[edger] (1.40000, 0.50000, 1.40000) -- (0.90000, 0.50000, 1.40000);
\draw[edger] (1.40000, 0.50000, 1.40000) -- (1.40000, 0.50000, 1.80000);
\draw[edge] (0.90000, 0.20000, 1.40000) -- (0.90000, 0.50000, 1.40000);
\draw[edger] (0.90000, 0.20000, 1.40000) -- (0.90000, 0.20000, 1.80000);
\draw[edger] (0.90000, 0.50000, 1.40000) -- (0.90000, 0.50000, 1.80000);
\draw[edger] (0.90000, 0.50000, 1.80000) -- (0.90000, 0.20000, 1.80000);
\draw[edger] (0.90000, 0.50000, 1.80000) -- (1.40000, 0.50000, 1.80000);
\node[vertex] at (1.40000, 0.20000, 1.40000)     {};
\node[vertex] at (1.40000, 0.50000, 1.40000)     {};
\node[vertex] at (0.90000, 0.20000, 1.40000)     {};
\node[vertex] at (0.90000, 0.50000, 1.40000)     {};
\node[vertex] at (0.90000, 0.50000, 1.80000)     {};
\node[vertex] at (0.90000, 0.20000, 1.80000)     {};
\node[vertex] at (1.40000, 0.50000, 1.80000)     {};
\node[sol,label=90:$\pp^{(1)}$] at (1,0.4,1 )  {};
\node[sol,label=45:$\pp^{(2)}$] at ( 0.75 ,  0.4  ,  1.4)  {};
\node[sol,label=-40:$\pp^{(3)}$] at ( 0.9 ,  0.4 ,  1.4 )  {};

\end{tikzpicture} %
 }

\subfloat[Solutions and cuts (projected) obtained with \Cref{algo:decompAPM-polyhedral}]{      
\label{fig:example-genAlgo}
\begin{tikzpicture}%
	[x={(-0.688249cm, -0.056987cm)},
	y={(-0.723341cm, 0.130317cm)},
	z={(0.055608cm, 0.989833cm)},
	scale=5.000000,
	back/.style={loosely dotted, thin},
	edge/.style={color=blue!95!black, thick},
	edger/.style={color=blue!95!black, dotted},
	facet/.style={fill=blue!35,fill opacity=0.800000},
	vertex/.style={inner sep=0.51pt,circle,draw=black,fill=black,thick,anchor=base},
	sol/.style={inner sep=1pt,circle,draw=blue,fill=blue,thick,anchor=base}]
\draw[color=black,thick,->]   (0.5 , 0.2 ,  1 ) -- (0.7 , 0.2 ,  1 ) node[anchor= north]{$p_1$};
\draw[color=black,thick,->]  (0.5 , 0.2 ,  1 ) -- (0.5 , 0.4 ,  1 ) node[anchor=south ]{$p_2$};
\draw[color=black,thick,->] (0.5 , 0.2 ,  1 )  -- (0.5 , 0.2 ,  1.2 ) node[anchor=south]{$p_3$};
\coordinate (1.40000, 0.50000, 1.30000) at (1.40000, 0.50000, 1.30000);
\coordinate (1.40000, 0.40000, 1.30000) at (1.40000, 0.40000, 1.30000);
\coordinate (1.40000, 0.20000, 1.33333) at (1.40000, 0.20000, 1.33333);
\coordinate (1.30000, 0.50000, 1.30000) at (1.30000, 0.50000, 1.30000);
\coordinate (0.80000, 0.50000, 1.38333) at (0.80000, 0.50000, 1.38333);
\coordinate (0.80000, 0.20000, 1.43333) at (0.80000, 0.20000, 1.43333);
\coordinate (0.80000, 0.20000, 1.80000) at (0.80000, 0.20000, 1.80000);
\coordinate (0.80000, 0.50000, 1.80000) at (0.80000, 0.50000, 1.80000);
\coordinate (1.40000, 0.20000, 1.80000) at (1.40000, 0.20000, 1.80000);
\coordinate (1.40000, 0.50000, 1.80000) at (1.40000, 0.50000, 1.80000);
\draw[edge,back] (1.40000, 0.20000, 1.33333) -- (1.40000, 0.20000, 1.80000);
\draw[edge,back] (0.80000, 0.20000, 1.80000) -- (1.40000, 0.20000, 1.80000);
\draw[edge,back] (1.40000, 0.20000, 1.80000) -- (1.40000, 0.50000, 1.80000);
\node[vertex] at (1.40000, 0.20000, 1.80000)     {};
\fill[facet,red!60] (0.80000, 0.20000, 1.43333) -- (1.40000, 0.20000, 1.33333) -- (1.40000, 0.40000, 1.30000) -- (1.30000, 0.50000, 1.30000) -- (0.80000, 0.50000, 1.38333) -- cycle {};
\fill[facet] (0.80000, 0.50000, 1.80000) -- (0.80000, 0.50000, 1.38333) -- (0.80000, 0.20000, 1.43333) -- (0.80000, 0.20000, 1.80000) -- cycle {};
\fill[facet] (1.40000, 0.50000, 1.80000) -- (1.40000, 0.50000, 1.30000) -- (1.30000, 0.50000, 1.30000) -- (0.80000, 0.50000, 1.38333) -- (0.80000, 0.50000, 1.80000) -- cycle {};
\fill[facet] (1.30000, 0.50000, 1.30000) -- (1.40000, 0.50000, 1.30000) -- (1.40000, 0.40000, 1.30000) -- cycle {};
\draw[edger] (1.40000, 0.50000, 1.30000) -- (1.40000, 0.40000, 1.30000);
\draw[edger] (1.40000, 0.50000, 1.30000) -- (1.30000, 0.50000, 1.30000);
\draw[edger] (1.40000, 0.50000, 1.30000) -- (1.40000, 0.50000, 1.80000);
\draw[edger] (1.40000, 0.40000, 1.30000) -- (1.40000, 0.20000, 1.33333);
\draw[edger] (1.40000, 0.40000, 1.30000) -- (1.30000, 0.50000, 1.30000);
\draw[edger] (1.40000, 0.20000, 1.33333) -- (0.80000, 0.20000, 1.43333);
\draw[edger] (1.30000, 0.50000, 1.30000) -- (0.80000, 0.50000, 1.38333);
\draw[edger] (0.80000, 0.50000, 1.38333) -- (0.80000, 0.20000, 1.43333);
\draw[edger] (0.80000, 0.50000, 1.38333) -- (0.80000, 0.50000, 1.80000);
\draw[edger] (0.80000, 0.20000, 1.43333) -- (0.80000, 0.20000, 1.80000);
\draw[edger] (0.80000, 0.20000, 1.80000) -- (0.80000, 0.50000, 1.80000);
\draw[edger] (0.80000, 0.50000, 1.80000) -- (1.40000, 0.50000, 1.80000);
\node[vertex] at (1.40000, 0.50000, 1.30000)     {};
\node[vertex] at (1.40000, 0.40000, 1.30000)     {};
\node[vertex] at (1.40000, 0.20000, 1.33333)     {};
\node[vertex] at (1.30000, 0.50000, 1.30000)     {};
\node[vertex] at (0.80000, 0.50000, 1.38333)     {};
\node[vertex] at (0.80000, 0.20000, 1.43333)     {};
\node[vertex] at (0.80000, 0.20000, 1.80000)     {};
\node[vertex] at (0.80000, 0.50000, 1.80000)     {};
\node[vertex] at (1.40000, 0.50000, 1.80000)     {};

\node[sol,label=-90:$\hat{\pp}^{(1)}$] at (1.0 , 0.4 ,  1 )  {};
\node[sol,label=70:$\hat{\pp}^{(2)}$] at (0.81 ,  0.4  ,  1.4 )  {};
\end{tikzpicture} %
\begin{tikzpicture}%
	[x={(-0.690296cm, 0.111443cm)},
	y={(-0.675921cm, 0.253154cm)},
	z={(0.258111cm, 0.960986cm)},
	scale=5.000000,
	back/.style={loosely dotted, thin},
	edge/.style={color=blue!95!black, },
	edger/.style={color=blue!95!black, dotted},
	facet/.style={fill=blue!35,fill opacity=0.800000},
	vertex/.style={inner sep=0.51pt,circle,draw=black,fill=black,thick,anchor=base},
	sol/.style={inner sep=1pt,circle,draw=blue,fill=blue,thick,anchor=base}]
\draw[color=black,thick,->]   (0.5 , 0.2 ,  1 ) -- (0.7 , 0.2 ,  1 ) node[anchor= north]{$p_1$};
\draw[color=black,thick,->]  (0.5 , 0.2 ,  1 ) -- (0.5 , 0.4 ,  1 ) node[anchor=south ]{$p_2$};
\draw[color=black,thick,->] (0.5 , 0.2 ,  1 )  -- (0.5 , 0.2 ,  1.2 ) node[anchor=south]{$p_3$};
\coordinate (1.40000, 0.50000, 1.30000) at (1.40000, 0.50000, 1.30000);
\coordinate (1.40000, 0.40000, 1.30000) at (1.40000, 0.40000, 1.30000);
\coordinate (1.40000, 0.20000, 1.33333) at (1.40000, 0.20000, 1.33333);
\coordinate (0.80000, 0.50000, 1.68653) at (0.80000, 0.50000, 1.68653);
\coordinate (1.30000, 0.50000, 1.30000) at (1.30000, 0.50000, 1.30000);
\coordinate (0.91243, 0.50000, 1.36459) at (0.91243, 0.50000, 1.36459);
\coordinate (0.80000, 0.20000, 1.68653) at (0.80000, 0.20000, 1.68653);
\coordinate (0.89389, 0.20000, 1.41769) at (0.89389, 0.20000, 1.41769);
\coordinate (0.80000, 0.20000, 1.80000) at (0.80000, 0.20000, 1.80000);
\coordinate (0.80000, 0.50000, 1.80000) at (0.80000, 0.50000, 1.80000);
\coordinate (1.40000, 0.20000, 1.80000) at (1.40000, 0.20000, 1.80000);
\coordinate (1.40000, 0.50000, 1.80000) at (1.40000, 0.50000, 1.80000);
\draw[edge,back] (1.40000, 0.20000, 1.33333) -- (1.40000, 0.20000, 1.80000);
\draw[edge,back] (0.80000, 0.20000, 1.80000) -- (1.40000, 0.20000, 1.80000);
\draw[edge,back] (1.40000, 0.20000, 1.80000) -- (1.40000, 0.50000, 1.80000);
\node[vertex] at (1.40000, 0.20000, 1.80000)     {};
\fill[facet,orange!30] (0.89389, 0.20000, 1.41769) -- (1.40000, 0.20000, 1.33333) -- (1.40000, 0.40000, 1.30000) -- (1.30000, 0.50000, 1.30000) -- (0.91243, 0.50000, 1.36459) -- cycle {};
\fill[facet,red!60] (0.89389, 0.20000, 1.41769) -- (0.91243, 0.50000, 1.36459) -- (0.80000, 0.50000, 1.68653) -- (0.80000, 0.20000, 1.68653) -- cycle {};
\fill[facet] (0.80000, 0.50000, 1.80000) -- (0.80000, 0.50000, 1.68653) -- (0.80000, 0.20000, 1.68653) -- (0.80000, 0.20000, 1.80000) -- cycle {};
\fill[facet] (1.40000, 0.50000, 1.80000) -- (1.40000, 0.50000, 1.30000) -- (1.30000, 0.50000, 1.30000) -- (0.91243, 0.50000, 1.36459) -- (0.80000, 0.50000, 1.68653) -- (0.80000, 0.50000, 1.80000) -- cycle {};
\fill[facet] (1.30000, 0.50000, 1.30000) -- (1.40000, 0.50000, 1.30000) -- (1.40000, 0.40000, 1.30000) -- cycle {};
\draw[edger] (1.40000, 0.50000, 1.30000) -- (1.40000, 0.40000, 1.30000);
\draw[edger] (1.40000, 0.50000, 1.30000) -- (1.30000, 0.50000, 1.30000);
\draw[edger] (1.40000, 0.50000, 1.30000) -- (1.40000, 0.50000, 1.80000);
\draw[edger] (1.40000, 0.40000, 1.30000) -- (1.40000, 0.20000, 1.33333);
\draw[edger] (1.40000, 0.40000, 1.30000) -- (1.30000, 0.50000, 1.30000);
\draw[edger] (1.40000, 0.20000, 1.33333) -- (0.89389, 0.20000, 1.41769);
\draw[edger] (0.80000, 0.50000, 1.68653) -- (0.91243, 0.50000, 1.36459);
\draw[edger] (0.80000, 0.50000, 1.68653) -- (0.80000, 0.20000, 1.68653);
\draw[edger] (0.80000, 0.50000, 1.68653) -- (0.80000, 0.50000, 1.80000);
\draw[edger] (1.30000, 0.50000, 1.30000) -- (0.91243, 0.50000, 1.36459);
\draw[edge] (0.91243, 0.50000, 1.36459) -- (0.89389, 0.20000, 1.41769);
\draw[edger] (0.80000, 0.20000, 1.68653) -- (0.89389, 0.20000, 1.41769);
\draw[edger] (0.80000, 0.20000, 1.68653) -- (0.80000, 0.20000, 1.80000);
\draw[edger] (0.80000, 0.20000, 1.80000) -- (0.80000, 0.50000, 1.80000);
\draw[edger] (0.80000, 0.50000, 1.80000) -- (1.40000, 0.50000, 1.80000);
\node[vertex] at (1.40000, 0.50000, 1.30000)     {};
\node[vertex] at (1.40000, 0.40000, 1.30000)     {};
\node[vertex] at (1.40000, 0.20000, 1.33333)     {};
\node[vertex] at (0.80000, 0.50000, 1.68653)     {};
\node[vertex] at (1.30000, 0.50000, 1.30000)     {};
\node[vertex] at (0.91243, 0.50000, 1.36459)     {};
\node[vertex] at (0.80000, 0.20000, 1.68653)     {};
\node[vertex] at (0.89389, 0.20000, 1.41769)     {};
\node[vertex] at (0.80000, 0.20000, 1.80000)     {};
\node[vertex] at (0.80000, 0.50000, 1.80000)     {};
\node[vertex] at (1.40000, 0.50000, 1.80000)     {};

\node[sol,label=-90:$\hat{\pp}^{(1)}$] at (1.0 , 0.4 ,  1 )  {};
\node[sol,label=70:$\hat{\pp}^{(2)}$] at (0.81 ,  0.4  ,  1.4 )  {};
\node[sol,label=-90:$\hat{\pp}^{(3)}$] at ( 0.91 ,  0.4  ,  1.38 )  {};
\end{tikzpicture}
\begin{tikzpicture}%
	[x={(-0.539421cm, -0.075235cm)},
	y={(-0.822314cm, 0.261371cm)},
	z={(0.181175cm, 0.962302cm)},
	scale=5.000000,
	back/.style={loosely dotted, thin},
	edge/.style={color=blue!95!black, },
	edger/.style={color=blue!95!black, dotted},
	facet/.style={fill=blue!35,fill opacity=0.800000},
	vertex/.style={inner sep=0.51pt,circle,draw=black,fill=black,thick,anchor=base},
	sol/.style={inner sep=1pt,circle,draw=blue,fill=blue,thick,anchor=base}]

\draw[color=black,thick,->]   (0.5 , 0.2 ,  1 ) -- (0.7 , 0.2 ,  1 ) node[anchor= north]{$p_1$};
\draw[color=black,thick,->]  (0.5 , 0.2 ,  1 ) -- (0.5 , 0.4 ,  1 ) node[anchor=north ]{$p_2$};
\draw[color=black,thick,->] (0.5 , 0.2 ,  1 )  -- (0.5 , 0.2 ,  1.2 ) node[anchor=south]{$p_3$};
\coordinate (1.40000, -0.20000, 1.40000) at (1.40000, -0.20000, 1.40000);
\coordinate (1.40000, 0.50000, 1.40000) at (1.40000, 0.50000, 1.40000);
\coordinate (0.90007, 0.50000, 1.40000) at (0.90007, 0.50000, 1.40000);
\coordinate (0.86917, -0.20000, 1.48847) at (0.86917, -0.20000, 1.48847);
\coordinate (0.90007, 0.29993, 1.40000) at (0.90007, 0.29993, 1.40000);
\coordinate (0.76037, -0.20000, 1.80000) at (0.76037, -0.20000, 1.80000);
\coordinate (0.76037, 0.50000, 1.80000) at (0.76037, 0.50000, 1.80000);
\coordinate (1.40000, 0.50000, 1.80000) at (1.40000, 0.50000, 1.80000);
\coordinate (1.40000, -0.20000, 1.80000) at (1.40000, -0.20000, 1.80000);
\draw[edge,back] (1.40000, -0.20000, 1.40000) -- (1.40000, -0.20000, 1.80000);
\draw[edge,back] (0.76037, -0.20000, 1.80000) -- (1.40000, -0.20000, 1.80000);
\draw[edge,back] (1.40000, 0.50000, 1.80000) -- (1.40000, -0.20000, 1.80000);
\node[vertex] at (1.40000, -0.20000, 1.80000)     {};
\fill[facet,orange!30] (0.90007, 0.29993, 1.40000) -- (1.40000, -0.20000, 1.40000) -- (0.86917, -0.20000, 1.48847) -- cycle {};
\fill[facet,orange!30] (0.76037, 0.50000, 1.80000) -- (0.90007, 0.50000, 1.40000) -- (0.90007, 0.29993, 1.40000) -- (0.86917, -0.20000, 1.48847) -- (0.76037, -0.20000, 1.80000) -- cycle {};
\fill[facet,red!60] (0.90007, 0.29993, 1.40000) -- (1.40000, -0.20000, 1.40000) -- (1.40000, 0.50000, 1.40000) -- (0.90007, 0.50000, 1.40000) -- cycle {};
\fill[facet] (1.40000, 0.50000, 1.80000) -- (1.40000, 0.50000, 1.40000) -- (0.90007, 0.50000, 1.40000) -- (0.76037, 0.50000, 1.80000) -- cycle {};
\draw[edger] (1.40000, -0.20000, 1.40000) -- (1.40000, 0.50000, 1.40000);
\draw[edger] (1.40000, -0.20000, 1.40000) -- (0.86917, -0.20000, 1.48847);
\draw[edge] (1.40000, -0.20000, 1.40000) -- (0.90007, 0.29993, 1.40000);
\draw[edger] (1.40000, 0.50000, 1.40000) -- (0.90007, 0.50000, 1.40000);
\draw[edger] (1.40000, 0.50000, 1.40000) -- (1.40000, 0.50000, 1.80000);
\draw[edge] (0.90007, 0.50000, 1.40000) -- (0.90007, 0.29993, 1.40000);
\draw[edger] (0.90007, 0.50000, 1.40000) -- (0.76037, 0.50000, 1.80000);
\draw[edge] (0.86917, -0.20000, 1.48847) -- (0.90007, 0.29993, 1.40000);
\draw[edger] (0.86917, -0.20000, 1.48847) -- (0.76037, -0.20000, 1.80000);
\draw[edger] (0.76037, -0.20000, 1.80000) -- (0.76037, 0.50000, 1.80000);
\draw[edger] (0.76037, 0.50000, 1.80000) -- (1.40000, 0.50000, 1.80000);
\node[vertex] at (1.40000, -0.20000, 1.40000)     {};
\node[vertex] at (1.40000, 0.50000, 1.40000)     {};
\node[vertex] at (0.90007, 0.50000, 1.40000)     {};
\node[vertex] at (0.86917, -0.20000, 1.48847)     {};
\node[vertex] at (0.90007, 0.29993, 1.40000)     {};
\node[vertex] at (0.76037, -0.20000, 1.80000)     {};
\node[vertex] at (0.76037, 0.50000, 1.80000)     {};
\node[vertex] at (1.40000, 0.50000, 1.80000)     {};

\node[sol,label=-90:$\hat{\pp}^{(1)}$] at (1.0 , 0.4 ,  1 )  {};
\node[sol,label=20:$\hat{\pp}^{(2)}$] at (0.81 ,  0.4  ,  1.4 )  {};
\node[sol,label=-90:$\hat{\pp}^{(3)}$] at ( 0.91 ,  0.4  ,  1.38 )  {};
\node[sol,label=90:${\hat{\pp}^{(4)}}$] at ( 0.9 ,  0.4 ,  1.4 )  {};
\end{tikzpicture}
}
    \caption{Illustration of the iterations of the proposed decomposition method. \textit{The cut $p_3\geq 1.4$, which is added at first for   \Cref{algo:confidOptimDisag} , is only added at the third iteration of  \Cref{algo:decompAPM-polyhedral} } \vspace{-0.5cm}}
  \label{fig:exampleT4Algos}
\end{figure*}
The 4 successive solutions and the 3 added cuts are represented in the three dimensional space on \Cref{fig:example-genAlgo}: we observe that the last cut needed to obtain the convergence of \Cref{algo:decompAPM-polyhedral}, corresponds to the first one added with \Cref{algo:confidOptimDisag}.

\subsection{A Nonconvex Example: Management of a Microgrid}
\label{subsec:appli-microgrid}
In this section, we illustrate the proposed method on a larger scale practical example from energy. We consider an electricity microgrid \cite{katiraei2008microgrids}  composed of N electricity consumers with flexible appliances (such as  electric vehicles or water heaters), a photovoltaic (PV) power plant and a conventional generator. 
The operator of the microgrid aims at satisfying the demand constraints of consumers over a set of time periods $\T=\{1,\dots,T\}$, while minimizing the energy cost for the community.
We have the following characteristics:
\begin{itemize}[wide]
\item[] -- the PV plant generates a nondispatchable power profile $(\ppv_t)_{t\in\Tall}$ at marginal cost zero;
\item[] -- the conventional generator has a starting cost $\Cst$, minimal and maximal power production $\pglb,\pgub$, and piecewise-linear and continuous generation cost function $\pg \mapsto f(\pg)$:
\begin{equation*}
  f(\pg)= \alpha_k + c_k \pg   , \text{ if } \pg \in \I_k \eqd [\theta_{k-1}, \ \theta_{k} [, \ k=1\dots K ,
      \;\text{where }\theta_0\eqd 0 \text{ and }\theta_K\eqd \pgub;
\end{equation*}
\item[] -- each agent $n\in\Nall$ has some flexible appliances which require a global energy demand $E_n$ on $\Tall$, and has consumption constraints on the total household consumption, on each time period $t\in\Tall$, that are formulated with $\llbx_n,\uubx_n$. These parameters are confidential because they could for instance contain some information on agent $n$ habits.
\end{itemize}
The master problem \eqref{pb:master} can be written as the following MILP \eqref{modelMG}:
\allowdisplaybreaks
\begin{subequations} \label{modelMG}
\begin{align}
& \hspace{-9pt} \min_{\pp,\ppg, (\ppg_k),(\bm{\bo}_k),\bm{\xon}, \bm{\xst} }  \sum_{t\in\Tall} \Big( \alpha_1 \xon_t+ \sum_k c_k {\pg_k}_t + \Cst \xst_t \Big) \label{eq:MGobj}\\
\label{eq:formPcwGenCost1} &\pg_t= \txt\sum_{k=1}^K \pg_{k,t} ,  \ \forall t \in \Tall \\
\label{eq:formPcwGenCost2}  & \bo_{k,t}(\theta_k- \theta_{k-1}) \leq  \pg_{k,t} \leq \bo_{k-1,t}(\theta_k- \theta_{k-1}) ,  \ \forall 1 \leq k \leq K, \ \forall t \in \Tall \\
\label{eq:consStart}& \xst_t \geq  \xon_{t}-\xon_{t-1}, \ \forall t \in \{2,\dots,T\}\\ 
\label{eq:consOn}& \pglb \xon_t \leq \pg_t \leq \pgub \xon_t  ,  \ \forall t \in \T\\
& \xon_t,\xst_t, \bo_{1,t}, \dots, \bo_{K-1,t} \in \{0,1\} ,  \ \forall t \in \Tall \\
\label{eq:consProdTot}& \pp\leq \pppv + \ppg \\
\label{eq:consProdDemand}& \pp^\tr \dsoneT= \EE^\tr \dsoneN \\
\label{eq:consProdBounds}& \llbx^\tr \dsoneN \leq \pp \leq \uubx^\tr  \dsoneN \ .
\end{align}
\end{subequations}
In this formulation (\ref{eq:formPcwGenCost1}-\ref{eq:formPcwGenCost2}), where $ \bo_{0,t} \eqd 1$ and $\bo_{K,t}\eqd 0$, are a mixed integer formulation of the generation cost function $f$. One can show that the Boolean variable $\bo_{k,t}$ is equal to one \textit{iff} $\pg_{t} \geq \theta_k$ for each $k\in \{1,\dots,K-1\}$. Note that only $\alpha_1$ appears in \eqref{eq:MGobj} because of the continuity of  $f$.

Constraints (\ref{eq:consStart}-\ref{eq:consOn}) ensure the on/off and starting constraints of the power plant, \eqref{eq:consProdTot} ensures that the power allocated to   consumption is not above the total production, and (\ref{eq:consProdDemand}-\ref{eq:consProdBounds}) are the aggregated feasibility conditions already referred to in \eqref{eq:agg-cond}.
The nonconvexity of \eqref{modelMG} comes from the existence of starting costs and constraints of minimal power, which makes necessary to use Boolean state variables $\xst,\xon$.

We simulate the problem described above for different values of $N\in \{ 2^4$,$2^5$, $2^6$,$2^7$,$2^8 \}$ and one hundred  instances with random parameters for each value of $N$. A scaling factor $\scal_N = N/20$ is applied on parameters to ensure that production capacity  is large enough to meet consumers demand. The parameters are chosen as follows:
\begin{itemize}[wide,labelindent=0pt]
\item[] -- $T=24$ (hours of a day);
\item[] -- production costs: $K=3$ , $\theta=[0,70,100,300]\scal_N, \bm{c}=[0.2,0.4,0.5]$, $\pglb\hh =\hh50\scal_N , \pgub\hh=\hh300\scal_N$, $\alpha_1 \hh =\hh 4$ and $\Cst=15$;
\item[] -- photovoltaic: $\ppv_t\hh\hh=\hh\left[ 50 (  1\hh-\hh\mathrm{cos}(\frac{ (t-6)2\pi}{16})\hh+\hh \U([0,10])\right]\hh\scal_N$ for $t\in \{6,\dots,20\}$, $\ppv_t=0$ otherwise; %
\item[] -- consumption parameters are drawn randomly with: $\lbx\nt \sim \U([0,10])$, $\ubx\nt \sim \U([0,5])+\lbx\nt $ and $E_n \sim  \U([ \dsoneT^\tr \llbx_n, \dsoneT^\tr \uubx_n])$, so that individual feasibility ($\X_n \neq \emptyset$) is ensured.%
\end{itemize}
\renewcommand{\arraystretch}{1.2}
\begin{table}[ht]
\centering
\begin{small}
\begin{tabular}{|l|c|c|c|c| c|}
\hline 
$N=$ & $2^4$           &  $2^5$ &  $2^6$&  $2^7$ & $2^8$ \\\hline \hline 
\#~master &  193.6	   &  194.1 & 225.5 &  210.9  & 194.0  \\\hline 
\#~projs.& 9507     & 15367 &24319 & 26538 & 26646  \\\hline%
\end{tabular}
\end{small}
\caption{number of subproblems solved  (average on 100 instances)\vspace{-0.5cm}}%
\label{tab:NumberSubpb}
\end{table}
We implement \Cref{algo:confidOptimDisag} using Python 3.5. The MILP  \eqref{modelMG} is solved using Cplex Studio 12.6 and  Pyomo interface. Simulations are run on a single core  of a cluster at 3GHz. %
For the convergence criteria (see Lines \ref{algline:NIAPM-conv-APM} and \ref{algline:NIAPM-conv-disag} of \Cref{algo:vonNeumannProjasymConfid}), we use $\epsdis=0.01$ with the operator norm  defined by $\normop{\xx}= \max_{n\in \Nall} \sum_t |x\nt|$ (to avoid the $\sqrt{N}$ factor in the convergence criteria appearing with $\norm{.}_2$), and starts with $\epscvg=0.1$. 
The largest instances took around 10 minutes to be solved  in this configuration and without parallel implementation. As the CPU time needed %
depends on the cluster load, it is not a reliable indicator of the influence on $N$ on the complexity of the problems. 
Moreover, one advantage of the proposed method is that the projections in APM can be computed locally by each agent in parallel, which could not be implemented here for practical reasons.
 
Table~\ref{tab:NumberSubpb}
gives the number of master problems solved and the total number of projections computed, 
 on average over the hundred instance  for each value of $N$.

One observes that the number of master problems \eqref{modelMG} solved %
(number of ``cuts'' added), remains almost constant  when $N$ increases. In all instances, this number is way below the upper bound of $2^{24}> 1,6\times 10^7$ possible constraints (see  \Cref{prop:finiteNumberCuts}), which suggests that only a limited  number of constraints are added in practice.
The average total number of projections computed for each instance (total number of iterations of the \textbf{while} loop of \Cref{algo:vonNeumannProjasymConfid}, Line \ref{algline:NIAPM-mainLoop} over all calls of APM in the instance) increases in a sublinear way
 which is even better that one could expect from the upper bound given in \Cref{prop:boundCvgRate}.

\section{Conclusion}

We provided a non-intrusive algorithm that enables to compute an optimal resource allocation, solution of a--possibly nonconvex--optimization problem, and affect to each agent an individual profile satisfying a global demand and lower and upper bounds constraints. 
Our method uses local projections and works in a distributed fashion. Hence, the  resolution of the problem is still efficient 
even in the case of a very large number of agents.
 The method is also privacy-preserving, as agents do not need to reveal any information on their constraints or their individual profile to a third party.

Several extensions and generalizations can be considered for this work.
\Cref{sec:generArbitraryIndividualSet} generalizes the procedure to arbitrary polyhedral constraints for agents. However, the number of constraints (cuts) added to the master problem is not proved to be finite as done in the transportation case.
Proving that only a finite number of constraints can be added (maybe up to a refinement procedure of the current constraint obtained) will enable to have a termination result for the algorithm in the general polyhedral case. 
In the transportation case, we showed the geometric convergence of APM with a rate linear in the number of agents.
 Moreover, the number of cuts added in the procedure is finite but the upper bound that we have remains exponential.
  In practice however, the number of constraints to consider remains small, as seen in \Cref{sec:examples}.
   A thinner upper bound on the number of cuts added in the algorithm in this case %
would constitute an interesting result.

\section*{Acknowledgments}
  We thank the referees for their comments, leading to improvements
  of this paper. We thank Daniel Augot for very helpful discussions
  concerning secure multiparty computations and cryptographic protocols.
\begin{appendix}

\section{Proof of \texorpdfstring{\Cref{prop:factsOncut}}{}}
\label{app:proofFactsonCuts}

\begin{proof}[Proof of  \Cref{claimFactcut1}]

Let us write the stationarity  conditions associated to problem \eqref{pb:minSquare}:
\begin{align} \label{eq:condoptxE}
\forall n\in \Nall,  \forall t \in \Tall, \quad   0 = (x\nt - y\nt) - \lambda_n - \umu\nt + \omu\nt   \quad  &\text{and } \quad   y\nt = x\nt + \nu_t \ .
\end{align}
By summing the latter equalities on $t\in\Tall$ and $n\in\Nall$, we obtain the
 equalities:
\begin{align}
\label{eq:KKT1factsOnCut}& \sum_t \nu_t = 	\sum_t y\nt - E_n  , \ \forall n\in\Nall\ \quad \quad \quad \quad 
p_t = \sum_n x\nt + N \nu_t , \ \forall t \in \Tall \enspace .
\end{align}
Moreover, by summing over $t\in \Tin_n$ the first series of equalities in~\eqref{eq:condoptxE}, and by using the complementary slackness conditions, which entail that $\umu\nt =\omu\nt=0$ for $t\in\Tin_n$, we get
\begin{align}
\label{eq:KKTlambdafactsOnCut}& | \Tin_n| \lambda_n = E_n - \sum_{t \in \uH_n} \lbx\nt  - \sum_{t \in \Tin_n } y\nt - \sum_{t\in \oH_n} \ubx\nt , \forall n \in \Nall \ ,%
\end{align}
where we define for each $n \in\Nall$:
\begin{equation*}
 \Tin_n \eqDef \{t \ | \  \lbx\nt < x\nt < \ubx\nt\}, \quad \    \uH_n=\{t \ | \ x\nt = \lbx\nt \}  \ \text{  and  } \ \oH_n=\{t \ | \ x\nt = \ubx\nt \} \ . 
\end{equation*}
From \eqref{eq:KKT1factsOnCut} and \Cref{assp:PverifAggregCond} giving the equality $\sum_n E_n = \sum_t p_t$, we obtain $ \sum_t \nu_t =0$ and:
\begin{equation} \label{eq:xpsumE}
\forall n \in \Nall, \ \txt \sum_{t\in\Tall} y\nt = E_n .
\end{equation}
Let us show that $\forall t \in \Tcut, \nu_t >0$. For $t\in\Tcut$, there exists $n$ s.t. $y\nt > \ubx\nt $. From \eqref{eq:condoptxE}  we get:
\begin{equation} \label{eq:posit_nu_t}
\nu_t=y\nt - x\nt \geq y\nt - \ubx\nt > 0 \ .
\end{equation} 
Suppose  by contradiction that there exists $n \notin \Ncut$ 
and $\hatt \in \Tcut$ such that $\x\nth < \ubx\nth$.
We obtain from complementary slackness conditions and the first equality in~\eqref{eq:condoptxE}: 
\begin{equation} \label{eq:lambda_neg_from_contrad}
\x\nth \geq y\nth + \lambda_n = x\nth + \nu_{\hatt} + \lambda_n \ \text{ which implies that } \  \lambda_n <0 \ .
\end{equation}
From this, we obtain $\oH_n \subset \Tcut$: indeed, for $t \in \oH_n $, we have $y\nt+ \lambda_n \geq \ubx\nt$ which gives:  
\begin{equation*}
 y\nt \geq \ubx\nt - \lambda_n > \ubx\nt \ \text{which implies } t \in \Tcut \ .
\end{equation*}
From the condition \eqref{eq:xpsumE} and as $\nu_t >0$  for each $t\in\oH_n$ because $\oH_n \subset \Tcut$ and \eqref{eq:posit_nu_t}, we get:
\begin{align*}
  & 0= \hh \sum_{t\in \Tall}(y\nt-x\nt)= \sum_{t\in\uH_n} \hh (y\nt- \lbx\nt) + \sum_{t\in \Tin_n}\hh (-\lambda_n) + \sum_{t \in \oH_n} \hh\nu_t\enspace \text{i.e.,}\\
  &  \qquad\qquad\qquad\sum_{t\in\uH_n}\hh (y\nt- \lbx\nt) =\hh \sum_{t\in \Tin_n} \hh\lambda_n - \hh\sum_{t \in \oH_n} \hh\nu_t  ,
\end{align*}
which is strictly negative: this implies that there exists $t'\in \uH_n$ such that $y_{n,t'}< \lbx_{n,t'}$. Necessarily, $t' \notin \Tcut$ because $\nu_{t'} = y_{n,t'} - x_{n,t'} < \lbx_{n,t'}-\lbx_{n,t'}=0 $.
 Then, as we have $\sum_{m\in \Nall} y_{m,t'} =p_{t'}\geq \sum_m \lbx_{m,t'} $, there  exists $m\in \Nall$ such that $y_{m,t'}>\lbx_{m,t'}$.
If $\lambda_m \leq 0$, %
and as $x_{m,t'}= \y_{m,t'} -\nu_{t'} > \lbx_{m,t'}  $, we get:
\begin{equation*}
x_{m,t'}= \min(\ubx_{m,t'},  \y_{m,t'} + \lambda_m ) \leq  \y_{m,t'} + \lambda_m \leq \y_{m,t'} = x_{m,t'} + \nu_{t'} < x_{m,t'} \ ,
\end{equation*}
which is impossible, thus $\lambda_m>0$. Now, we observe that $\Tin_n \subset \Tcut$. Indeed, otherwise, if $t"\in \Tin_n \cap \Tcut^c$, we have $\nu_{t"}= - \lambda_n > 0$ and $x_{m,t"} = y_{m,t"} -\nu_{t"} < y_{m,t"}  < \ubx_{m,t"}$, thus we get:
\begin{equation*}
\x_{m,t"} = \max(\lbx_{m,t"}, \y_{m,t"} + \lambda_m) \geq \y_{m,t"} + \lambda_m > \y_{m,t"} = x_{m,t"} +\nu_{t"} + \lambda_m > x_{m,t"} \ ,
\end{equation*} 
which is impossible, thus $\Tin_n \subset \Tcut$. 

Finally, since $\oH_n^c \neq \emptyset$, consider $t_0 \in \arg\min_{t\notin \oH_n} \{ \ubx\nt - y\nt \}$.
By \eqref{eq:KKTlambdafactsOnCut}, we obtain:
\begin{equation}
\label{eq:ineqy0t0} y_{n,t_0} + \lambda_n < \ubx_{n,t_0} \  \Longleftrightarrow \ E_n  - \sum_{t\in\uH_n} \lbx\nt - \sum_{t\in \Tin_n} y\nt -\sum_{t\in\oH_n} \ubx\nt < |\Tin_n | (\ubx_{n,t_0} - y_{n,t_0})  \ 
\end{equation}
and thus:
\begin{align}
  \nonumber E_n \hh-\hh \sum_{t \in \Tcut^c} \lbx\nt \hh- \hh\sum_{t\in \Tcut} \ubx\nt &=   E_n -\hh \sum_{t\in\uH_n} \lbx\nt + \hh\hh\sum_{t\in\uH_n \cap \Tcut}\hh\hh \lbx\nt - \hh\hh\sum_{t\in \oH_n \cup \Tin_n}\hh\hh \ubx\nt  - \hh\hh\sum_{t\in\uH_n \cap \Tcut} \hh\hh\ubx\nt \\
  \nonumber &\qquad\qquad\qquad\qquad\qquad\qquad\qquad\qquad\qquad\qquad\qquad \quad \text{(as } \oH_n \cup \Tin_n \subset \Tcut \text{)} \\
& \label{eq:H0emptycontradiction} \!\!\!\!< \sum_{t\in \Tin_n} ( \ubx_{n,t_0} -y_{n,t_0})-  (\ubx\nt-y\nt) + \sum_{\uH_n \cap \Tcut} (\lbx\nt - \ubx\nt)  \quad \text{ (from \eqref{eq:ineqy0t0})}\\
& \label{eq:H0emptycontradiction2} \!\!\!\!\leq 0 \quad \text{ (from the definition of }t_0 \text{ and } \lbx\nt \leq  \ubx\nt\text{)} ,
\end{align}
which contradicts $n \notin \Ncut$. Thus we have shown that $ \forall n \notin \Ncut, \forall t \in \Tcut, \ \x\nt = \ubx\nt$. From \eqref{eq:condoptxE} and as $\forall t \in \Tcut, \nu_t>0$ from \eqref{eq:posit_nu_t}, we obtain $ \forall n \notin \Ncut, \forall t \in \Tcut, \ \y\nt=x\nt+\nu_t > \ubx\nt$. This terminates the proof for \Cref{claimFactcut1}.
\end{proof}
\begin{proof}[Proof of \Cref{claimFactcut2}]
Let us consider $\Tcut' \eqDef \{t | \nu_t>0 \}$. We already showed that $\T \subset \Tcut'$ in \eqref{eq:posit_nu_t}.
 To prove the other inclusion and obtain \Cref{claimFactcut2}, we observe that, considering $n \notin\Ncut$ (nonempty as shown independently in \Cref{claimFactcutNonemptysets}) and considering  $\Tcut'$ instead of $\Tcut$, all the facts established in the proof of \Cref{claimFactcut1} above also hold: by contradiction of $\forall t\in \Tcut'$, $x\nt = \ubx\nt$, we first obtain $\lambda_n  < 0$ as in \eqref{eq:lambda_neg_from_contrad}. 
 Then, as for $t" \in \Tin_n$, we have $\nu_{t"}=-\lambda_n >0$, we get $\Tin_n \cap \Tcut'^c= \emptyset$. Finally the same sequence of inequalities  as (\ref{eq:H0emptycontradiction}-\ref{eq:H0emptycontradiction2}) show  a contradiction. 
Consequently, for each $t\in \Tcut'$, $x\nt = \ubx\nt$ and $y\nt = x\nt+\nu_t > \ubx\nt $, thus $t\in \Tcut$ and $\Tcut'\subset \Tcut$. 
\end{proof}
\begin{proof}[Proof of \Cref{claimFactcut4}]
Suppose on the contrary that there exists $n\in\Ncut$ such that $\lambda_n\geq 0$. For $t\in \Tin_n$, we have $\nu_t=-\lambda_n \leq 0$, thus, $\Tin_n \subset \Tcut^c$.  
Then, if $t\in \Tcut$ and if $x\nt < \ubx\nt$, we would have: 
\begin{equation*}
x\nt = \max( \lbx\nt, y\nt+ \lambda_n) \geq x\nt +0+ \nu_t > x\nt \ ,\end{equation*}
which is impossible, thus $x\nt =\ubx\nt$, and $\Tcut \subset \oH_n$. As we show independently in \Cref{claimFactcutNonemptysets} that $\Tcut \neq \emptyset$, we know $\oH_n \neq \emptyset$. Let us consider $t_0 \in \arg\min_{t\notin \uH_n} \{ y\nt-\lbx\nt\}$. 
By \eqref{eq:KKTlambdafactsOnCut}, we obtain:
\begin{equation}
\label{eq:ineqy0t02} y_{n,t_0} + \lambda_n > \lbx_{n,t_0} \  \Longleftrightarrow \ E_n  - \sum_{t\in\uH_n} \lbx\nt - \sum_{t\in \Tin_n} y\nt -\sum_{t\in\oH_n} \ubx\nt > |\Tin_n | (\lbx_{n,t_0} - y_{n,t_0})  \ 
\vspace{-0.5cm}
\end{equation}
and thus:
\begin{align}
  \nonumber E_n - \hh \sum_{t\in \Tcut^c} \lbx\nt - & \hh \sum_{\Tcut} \ubx\nt =  E_n- \sum_{t\in\uH_n} \lbx\nt -\hh\hh\hh\hh \sum_{t\in \Tcut^c\cap \oH_n} \hh\hh\hh\lbx\nt  -\hh \sum_{t\in\Tin_n} \hh\lbx\nt  -\hh \sum_{t\in\oH_n}\hh \ubx\nt  + \hh\hh\hh\hh \sum_{t\in \Tcut^c \cap \oH_n} \hh\hh\hh\hh \ubx\nt \\
\nonumber  & \qquad \qquad \qquad \qquad\qquad\qquad \text{(as } \Tcut \subset \oH_n \text{)} \\
\label{eq:contradT0N0-3}& \!\!\!\!\!\!\!\!\!\!\!\!\!\!\!\!\!\!>\sum_{t\in\Tin_n}\left((y\nt-\lbx\nt) - (\y_{n,t_0}-\lbx_{n,t_0}) \right) + \sum_{t\in \Tcut^c \cap \oH_n} \ubx\nt-\lbx\nt \quad \text{(from \eqref{eq:ineqy0t02}) }\\
\label{eq:contradT0N0-4}& \!\!\!\!\!\!\!\!\!\!\!\!\!\!\!\!\!\!\geq 0  \ \ \quad \text{ (from the definition of }t_0 \text{ and } \lbx\nt \leq  \ubx\nt\text{)} ,
\end{align}
which contradicts $n\in\Ncut$ and terminates the proof of \Cref{claimFactcut4}. %
\end{proof}
\begin{proof}[Proof of \Cref{claimFactcut3}]
From \ref{claimFactcut2}, we know that $\Tcut^c=\{t | \nu_t \leq 0 \}$, thus, if $t \notin \Tcut$ and $n \in \Ncut$, if $x\nt > \lbx\nt$ then we would have $x\nt \leq y\nt+\lambda_n = x\nt + \nu_t + \lambda_n < x\nt$, which is a contradiction. %
\end{proof}
\begin{proof}[Proof of \Cref{claimFactcutNonemptysets}]
From $\sum_t \nu_t = 0$, we see that if $\Tcut= \emptyset$, then this means that $\nu_t=0$ for all $t\in\Tall$, and thus  $\xP=\xE$ which is a contradiction. Thus there exists $t_0$  such that $\nu_{t_0}>0$ and because $\sum_{t\in\Tcut} \nu_t=0$, there exists $t'_0$ such that  $\nu_{t'_0}<0$.

If $\Ncut= \emptyset$, then using \ref{claimFactcut1}, we would have for all $n$, $y_{n,t_0} > \ubx_{n,t_0}$ and thus $p_{t_0} > \sum_{n\in\Nall} \ubx_{n,t_0}$, which contradicts the aggregate upper bound constraint $\forall t, \ p_{t} \leq \sum_{n\in\Nall} \ubx_{n,t} $.

If $\Ncut^c = \emptyset$, then using \ref{claimFactcut3}, we would have for all $n$, $y_{n,t_0'} < \lbx_{n,t_0'} $ and thus $p_{t_0'} <\sum_{n\in\Nall}\lbx_{n,t_0'}$, which contradicts the aggregate lower bound constraint $\forall t, \ p_{t} \geq \sum_{n\in\Nall} \lbx_{n,t} $.
\end{proof}

\section{Proof of \texorpdfstring{\Cref{lemm:factsCutFinite}}{}}
\label{app:proofCutFinite}
\begin{proof}[Proof of \Cref{claimFactcutFinite1}]
From $\xx\Kexp= P_{\X}(\yy\Kexpp)$ and $\yy\Kexp=P_{\Y}(\xx\Kexp)$, we obtain, similarly to \eqref{eq:condoptxE}:
\begin{equation} \label{eq:condoptxFinite}
\forall n\in \Nall,  \forall t \in \Tall,\   0 = (x\nt\Kexp - y\nt\Kexpp) - \lambda_n\Kexp \hh - \umu\nt\Kexp  \hh + \omu\nt\Kexp   \ \text{and } \ y\nt\Kexp = x\nt\Kexp + \nu_t\Kexp .
\end{equation}
where the Lagrangian multipliers $\lambda_n\Kexp, \umu\nt\Kexp , \omu\nt\Kexp$ (resp. $\nu_t\Kexp$)  are associated to the quadratic problem characterizing the projections $ P_{\X}(\yy\Kexpp)$  (resp. $\yy\Kexp=P_{\Y}(\xx\Kexp)$). 
We obtain equalities similar to (\ref{eq:KKT1factsOnCut}, \ref{eq:KKTlambdafactsOnCut}).
We proceed as for \Cref{prop:factsOncut}\ref{claimFactcut1} and suppose that there exists $n \notin \Ncut$ 
 and $\hatt \in \Tcut$ such that $\x\nth < \ubx\nth$. Then, as  $\norm{\yy\Kexp-\yyinf } \leq \tfrac{\epscvg}{1-\rho}$ and $\sum_{t\in\Tall} \yy_t\Kexp = \sum_{t\in\Tall} \yyinf_t$, we have for each $n\in\Nall, t\in\Tall$, $|y\nt\Kexp-y\nt^\infty| \leq \tfrac{\epscvg}{2(1-\rho)} $, and thus we get:
 \begin{equation}
 \label{eq:findBoundLambdaK}
 \begin{split}
& \ubx\nth \geq \x\nth\Kexp \geq y\nth\Kexpp + \lambda_n\Kexp \geq \y\nth^\infty - \tfrac{\epscvg}{2(1-\rho)} + \lambda_n\Kexp = x\nth^\infty+ \nu_{\hatt}^\infty - \tfrac{\epscvg}{2(1-\rho)} + \lambda_n\Kexp \\
&  \Longrightarrow \lambda_n\Kexp  <  \tfrac{\epscvg}{2(1-\rho)} v -  \nu_{\hatt}^\infty  < \tfrac{\Bseuil\epscvg}{2}  - 2 \Bseuil\epscvg = - \tfrac{3}{2}\Bseuil\epscvg  \ ,  
\end{split}
 \end{equation}
as $\nu_{\hatt}^\infty \geq \lnu  > 2 \Bseuil\epscvg$. Let us now consider $t'\in {\Tin_n}\Kexp \cup \oH_n\Kexp$, then:
\begin{equation} \label{eq:boundNuK}
\nu_{t'}\Kexp = y_{n,t'}\Kexp-x_{n,t'}\Kexp \geq y_{n,t'}\Kexp-y_{n,t'}\Kexp  - \lambda_n\Kexp > -\tfrac{\epscvg}{2} + \tfrac{3}{2}\Bseuil\epscvg  > \Bseuil\epscvg + \tfrac{\epscvg}{2}(\Bseuil-1) \geq \Bseuil\epscvg   \ ,
\end{equation}
which shows that $ t' \in \Tcut\Kexp=\Tcut^\infty$ and thus ${\Tin_n}\Kexp \cup \oH_n\Kexp \subset \Tcut$. 
Then, the same sequence of inequalities  as (\ref{eq:ineqy0t0}, \ref{eq:H0emptycontradiction}, \ref{eq:H0emptycontradiction2}) applied to $\yy\Kexpp$ gives a contradiction to $n\notin\Ncut$.
\end{proof}

\begin{proof}[Proof of \Cref{claimFactcutFinite2}]
The proof of \Cref{claimFactcutFinite2} is symmetric to the one of \Cref{claimFactcutFinite1}: if we suppose that there exists $n\in\Ncut$ and $\hat{t}\notin \Tcut$ such that $\x\nth\Kexp> \lbx\nth$, we obtain, symmetrically to \eqref{eq:findBoundLambdaK}, that $\lambda_n\Kexp \geq - \tfrac{\epscvg}{2(1-\rho)}$. Then, considering $t' \in \uH_n\Kexp \cup {\Tin_n}\Kexp$, we show, symmetrically to \eqref{eq:boundNuK}, that $\nu_{t'}\Kexp < \Bseuil\epscvg$ i.e.\  $t' \notin \Tcut$ and thus $\uH_n\Kexp \cup {\Tin_n}\Kexp \subset \Tcut^c$. 
We conclude by obtaining a contradiction to $n\in\Ncut$ by the same sequence of inequalities as (\ref{eq:ineqy0t02}, \ref{eq:contradT0N0-3}, \ref{eq:contradT0N0-4}).
\end{proof}

\end{appendix}


\begin{small}
\bibliographystyle{siam}
\bibliography{../../../USEFULPAPERS/Biblio_complete/shortJournalNames.bib,../../../USEFULPAPERS/Biblio_complete/biblio1,../../../USEFULPAPERS/Biblio_complete/biblio2,../../../USEFULPAPERS/Biblio_complete/biblio3,../../../USEFULPAPERS/Biblio_complete/biblio4,../../../USEFULPAPERS/Biblio_complete/biblio5,../../../USEFULPAPERS/Biblio_complete/biblioBooks}
\end{small}
\end{document}